\theoremstyle{plain}
\newtheorem{theorem}{Theorem}[section]
\newtheorem{lemma}[theorem]{Lemma}
\newtheorem{proposition}[theorem]{Proposition}
\newtheorem{prop-def}[theorem]{Proposition-Definition}
\newtheorem{corollary}[theorem]{Corollary}
\theoremstyle{definition}
\newtheorem{definition}[theorem]{Definition}
\newtheorem{remark}[theorem]{Remark}
\theoremstyle{remark}
\newtheorem*{ack}{Acknowledgement}
\numberwithin{equation}{section}
\def\Var{\mathrm{Var}}
\def\ord{\mathrm{ord}}
\def\id{\mathrm{id}}
\def\H{\mathscr{H}}
\def\Spec{\mathrm{Spec}}
\def\m{\mathbf m}
\def\gcd{\mathrm{gcd}}
\def\pr{\mathrm{pr}}
\def\loc{\mathrm{loc}}
\def\inte{\mathrm{int}}
\def\sr{\mathrm{sr}}
\def\ssr{\mathrm{ssr}}
\def\g{\mathbf{g}}
\def\p{\mathbf{p}}
\def\e{\mathbf{e}}
\def\MM{\overline{\mathscr{M}}}
\def\Lbb{\mathbb L}
\def\T{\mathbf T}
\def\U{\mathbf U}
\def\V{\mathbf V}
\def\f{\mathbf f}
\def\l{\mathbf l}
\def\k{\mathbf{k}}
\def\a{\mathbf{a}}
\def\b{\mathbf{b}}
\def\n{\mathbf{n}}
\begin{document} 
%-------------------
\title[Motivic Euler reflexion formulas]{\bf Euler reflexion formulas for motivic multiple zeta functions}  % Declares the document's title.

%    Information for first author
\author{L\^e Quy Thuong}
\address{Department of Mathematics, Vietnam National University \newline \indent 
334 Nguyen Trai Street, Thanh Xuan District, Hanoi, Vietnam}
\email{leqthuong@gmail.com}
\thanks{The first author's research is funded by the Vietnam National University, Hanoi (VNU) under project number QG.16.06.}

%    Information for second author
\author{Nguyen Hong Duc}
\address{Quang Binh University\newline \indent 312 Ly Thuong Kiet, Dong Hoi City, Quang Binh, Vietnam.} 
\email{nhduc82@gmail.com}
\curraddr{BCAM -- Basque Center for Applied Mathematics \newline \indent 
Mazarredo, 14, 48009 Bilbao, Basque Country -- Spain}
\email{hnguyen@bcamath.org}
\thanks{The second author was supported by Vietnam National Foundation for Science and Technology Development (NAFOSTED) under Grant Number 101.04-2014.23.}

\thanks{This research is also supported by ERCEA Consolidator Grant 615655 - NMST and by the Basque Government through the BERC 2014-2017 program and by Spanish Ministry of Economy and Competitiveness MINECO: BCAM Severo Ochoa excellence accreditation SEV-2013-0323.}

\subjclass[2010]{Primary 14A10, 14E18, 14R20, 32S45}

\date{-- and, in revised form, --.}

\keywords{arc spaces, motivic zeta functions, motivic multiple zeta functions, motivic nearby cycles, motivic Thom-Sebastiani theorem, resolution of singularity}

          % End of preamble and beginning of text. 
\begin{abstract}
We introduce a new notion of $\boxast$-product of two integrable series with coefficients in distinct Grothendieck rings of algebraic varieties, preserving the integrability and commuting with the limit of rational series. In the same context, we define a motivic multiple zeta function with respect to an ordered family of regular functions, which is integrable and connects closely to Denef-Loeser's motivic zeta functions. We also show that the $\boxast$-product is associative in the class of motivic multiple zeta functions.

Furthermore, a version of the Euler reflexion formula for motivic zeta functions is nicely formulated to deal with the $\boxast$-product and motivic multiple zeta functions, and it is proved for both univariate and multivariate cases by using the theory of arc spaces. As an application, taking the limit for the motivic Euler reflexion formula we recover the well known motivic Thom-Sebastiani theorem.  
\end{abstract}

\maketitle                 % Produces the title.

\section{Introduction}
We study extensions of Denef-Loeser's motivic zeta functions under motivations from a nice simple formula concerning multiple zeta values $\zeta$ and from a problem on poles of the Igusa local zeta function of a Thom-Sebastiani type function. The latter may involve the monodromy conjecture, the highest interest of ours so that the present work is just a start. The relation between real numbers $s_1, s_2\geq 2$ presented through the single and double zeta values as  
$$\zeta(s_1)\zeta(s_2)=\zeta(s_1,s_2)+\zeta(s_2,s_1)+\zeta(s_1+s_2)$$
is widely known as the Euler reflexion formula, whose further important generalizations can be found in Zagier's works, such as \cite{Zagier}. This beauty partially inspires us to consider an analogous phenomenon in the framework of motivic zeta functions, which probably provides more profound relations than the motivic Thom-Sebastiani theorem does. 

In \cite{Denef} and \cite{DV}, Denef and Denef-Veys discuss poles of the Igusa local zeta function $Z_{\Phi}(s,\chi,f)$ of a polynomial $f$ with respect to a Schwartz-Bruhat function $\Phi$ and to a character $\chi$. It is proved that there exists a function $A(s,\chi)$ depending on a character such that, for polynomials $f$ and $g$ and Schwartz-Bruhat functions $\Phi$ and $\Psi$, the poles of $A(s,\chi)Z_{\Phi\Psi}(s,\chi,f(x)+g(y))$ are of the form $s_1+s_2$, where $s_1$ and $s_2$ are poles of $A(s,\chi_1)Z_{\Phi}(s,\chi_1,f)$ and $A(s,\chi_2)Z_{\Psi}(s,\chi_2,g)$, respectively, for some $\chi_1\chi_2=\chi$. Naturally, we can ask whether a similar result still holds for motivic zeta functions, and, hopefully, a motivic Euler reflexion formula may be the first step to answer it.

The motivic zeta function of a regular function was developed in the background of Denef-Loeser's motivic integration \cite{DL1, DL2, DL3}. Afterwards, a version for a family of regular functions was also discussed in \cite{G} and \cite{GLM2}. Such a motivic zeta function for $r$ regular functions $f_i$ on a smooth algebraic variety $X$ over a field $k$ of characteristic zero is a formal series $Z_{f_1,\dots,f_r}(T_1,\dots,T_r)$ with coefficients in a certain monodromic Grothendieck ring $\mathscr M_{X_0}^{\hat\mu}$, where $X_0$ is the common zero set of the family of $f_i$. Originally, it is defined as follows
$$Z_{f_1,\dots,f_r}(T_1,\dots,T_r)=\sum\ [\mathscr X_{n_1,\dots,n_r}]\Lbb^{-d\sum n_i}\ T_1^{n_1}\cdots T_r^{n_r},$$ 
where the sum is taken over $\mathbb N_{>0}^r$ and $\mathscr X_{n_1,\dots,n_r}$ is the set of arcs $\varphi\in \mathscr L_{\sum n_i}(X)$ such that $f_i(\varphi)=t^{n_i}$ modulo $t^{n_i+1}$. When looking for a motivic analogue of the Euler reflexion formula, we recognize that $Z_{f_1,\dots,f_r}$ is still rather far to be an appropriate one, even letting the sum run over the ``optimal'' subset $\Delta$ of $\mathbb N_{>0}^r$ defined by $1\leq n_1<\cdots <n_r$. This requires a solid improvement in many aspects, including motivic zeta functions and products of them. In our approach, we replace the conditions $f_i(\varphi)=t^{n_i}$ modulo $t^{n_i+1}$ by $\ord f_i>n_i$ for every $2\leq i\leq r$, and take the sum over $\Delta$, where the resulting motivic zeta function will be denoted by $\zeta_{f_1,\dots,f_r}(T_1,\dots,T_r)$. This new notation still covers classical motivic zeta functions $Z_{f_1}(T_1)$, thus from now on we shall write $\zeta_{f_1}(T_1)$ in stead of $Z_{f_1}(T_1)$ for the coherence in literature. The integrability of $\zeta_{f_1,\dots,f_r}(T_1,\dots,T_r)$ will be proved in Corollary \ref{integrabilitymmzf}.

We introduce a new product of two integrable series (e.g., motivic zeta functions) in different rings of formal series. More precisely, if $a(\T)\in \MM_X^{\hat\mu}[[\T]]$ and $b(\U)\in \MM_Y^{\hat\mu}[[\U]]$ are integrable series in several variables, we define a reasonable element $a(\T)\boxast b(\U)$ in $\MM_{X\times Y}^{\hat\mu}[[\T,\U]]$ which is also an integrable series (Definitions \ref{2-product} and \ref{n-product}, Corollary \ref{integrabilitymmzf}). Here, for a technical reason, we work in an appropriate localization $\MM_X^{\hat\mu}$ of $\mathscr M_X^{\hat\mu}$ for any base $X$. Roughly speaking, the $\boxast$-product is an object lying in the middle of the external product and the convolution. When $\T$ and $\U$ reduce to univariates $T$ and $U$, the commuting of $\boxast$ with $\lim_{T=U\to\infty}$ will be stated in Theorem \ref{commutativity} and given a complete proof. This product allows us to describe the motivic zeta function of a Thom-Sebastiani type regular function in terms of motivic multiple zeta functions. 

The following is the statement of the most important results of the present article, the motivic Euler reflexion formulas. Let $X$ and $Y$ be smooth algebraic $k$-varieties, on which it admits regular functions $f$ and $g$ with the zero loci $X_0$ and $Y_0$, respectively. Let $f\oplus g$ be the function on $X\times Y$ defined by the sum $f(x)+g(y)$. Denote by $\iota$ the inclusion of $X_0\times Y_0$ in $X\times Y$. The motivic Euler reflexion formula in this case states that the identity
$$\zeta_f(T)\boxast \zeta_g(U)=\zeta_{f,g}(T,U)+\zeta_{g,f}(U,T)+\iota^*\zeta_{f\oplus g}(TU),$$
holds in $\MM_{X_0\times Y_0}^{\hat\mu}[[T,U]]$. This formula is given in Theorem \ref{thm21}. As an application, taking $T=U$ and using the fact that $\boxast$ and $\lim_{T\to\infty}$ commute, we can deduce from the motivic Euler reflexion formula the motivic Thom-Sebastiani theorem, which was proved previously in \cite{DL3}, \cite{Loo} and \cite{Thuong2}. 

More generally, we consider ordered families of regular functions $\f=(f_1,\dots, f_r)$ and $\g=(g_1,\dots, g_s)$ on smooth algebraic $k$-varieties $X_1,\dots, X_r$ and $Y_1,\dots, Y_s$, with common zero loci $X_0$ and $Y_0$, respectively, and formulate the general motivic Euler reflexion formula as follows
\begin{equation*}
\zeta_{\f}(\T)\boxast \zeta_{\g}(\U)
=\sum \iota^*\zeta_{p_1,\dots, p_{\eta}}(T_{\alpha_1}^{a_1}U_{\beta_1}^{b_1},\dots,T_{\alpha_{\eta}}^{a_{\eta}}U_{\beta_{\eta}}^{b_{\eta}}),
\end{equation*}
where the context of the identity is $\MM_{X_0\times Y_0}^{\hat\mu}[[\T,\U]]$, and the sum is taken over all the ordered families of regular functions $(p_1,\ldots,p_{\eta})$ satisfying 
$$p_i=a_i f_{\alpha_i}\oplus b_i g_{\beta_i},\ 1\leq i\leq \eta,$$
with $(a_i,b_i)\in \{0,1\}^2\setminus \{(0,0)\}$, $\sum(a_i+b_i)=r+s$, and $\{\alpha_i\}_{a_i=1}$ and $\{\beta_i\}_{b_i=1}$ being strictly monotonic increasing sequences, and $\iota$ is the inclusion of $X_0\times Y_0$ in $\prod_{i=1}^rX_i\times \prod_{j=1}^sY_j$ (see Theorem \ref{associativity}). An direct corollary of this formula is the associativity of the $\boxast$-product in the class of motivic multiple zeta functions (see Corollaries \ref{lastcor1} and \ref{lastcor2}).

\begin{ack}
Part of the present article was written at the Mathematisches Forschungsinstitut Oberwolfach (MFO) and the Vietnam Institute for Advanced Study in Mathematics (VIASM). Thanks are sincerely sent to the institutes for their warm hospitality during the authors' visits. 
\end{ack}

%****************************
\section{Preliminaries}
\subsection{Grothendieck rings and rings of formal series}
Let $k$ be a field of characteristic zero, $X$ an algebraic $k$-variety and $\Var_X$ the category of $X$-varieties. The {\it Grothendieck group} $K_0(\Var_X)$ of $X$-varieties is an abelian group generated by symbols $[Y\to X]$ for objects $Y\to X$ in $\Var_X$ modulo the following relations 
$$[Y\to X]=[V\to X]$$ 
if $Y\to X$ and $V\to X$ are isomorphic in $\Var_X$, and 
$$[Y\to X]=[V\to X]+[Y\setminus V\to X]$$ 
if $V$ is Zariski closed in $Y$. Furthermore, $K_0(\Var_X)$ has structure of a ring with unit with product induced by fiber product of $X$-varieties and the unit being the class of the identity morphism $X\to X$. Let $\mathscr M_X$ be the localization of $K_0(\Var_X)$ with respect to the multiplicative system of $\Lbb^i$ with $i\in\mathbb N$, where $\Lbb:=[\mathbb A_X^1]=[\mathbb A_k^1\times X\to X]$. In this situation and from now on, whenever writing $X\times X'$ for $k$-schemes $X$ and $X'$ we means the fiber product $X\times_k X'$.

Let $\mu_n=\mu_n(k)$ be the group scheme of $n$th roots of unity in $k$, $\Spec(k[t]/(t^n-1))$. The family of all $\mu_n$, $n\in\mathbb N_{>0}$, forms a projective system with respect to morphisms $\mu_{nm}\to\mu_{n}$ given by $\xi\mapsto \xi^m$, we denote its projective limit by $\hat{\mu}$. By definition, a {\it good} $\mu_n$-action on an $X$-variety $Y$ is a group action $\mu_n\times Y \to Y$, which is a morphism of $X$-varieties, such that each orbit is contained in an affine $k$-subvariety of $Y$; a {\it good} $\hat{\mu}$-action on $Y$ is an action of $\hat{\mu}$ on $Y$ factoring through a good $\mu_n$-action.

The {\it monodromic Grothendieck group} $K_0^{\hat{\mu}}(\Var_X)$ of $X$-varieties endowed with good $\hat\mu$-action is an abelian group generated by the $\hat\mu$-equivariant isomorphism classes $[Y\to X,\sigma]$, $\sigma$ being a good $\hat{\mu}$-action on $X$-variety $Y$, modulo the following conditions 
$$[Y\to X,\sigma]=[V\to X,\sigma|_V]+[Y\setminus V\to X,\sigma|_{Y\setminus V}]$$ 
if $V$ is Zariski closed in $Y$ and 
$$[Y\times\mathbb A_k^n\to X,\sigma]=[Y\times\mathbb A_k^n\to X,\sigma']$$ 
if $\sigma$, $\sigma'$ lift the same $\hat{\mu}$-action on $Y\to X$ to an affine action on $Y\times\mathbb A_k^n\to X$. When no confusion may happen, we write $[Y,\sigma]$ for $[Y\to X,\sigma]$ for simplicity. Thanks to fiber product of $X$-varieties, $K_0^{\hat{\mu}}(\Var_X)$ has the natural structure of a ring. Define 
$$\mathscr M_X^{\hat{\mu}}:=K_0^{\hat{\mu}}(\Var_X)[\Lbb^{-1}],$$ 
the $\hat{\mu}$-equivariant version of the ring $\mathscr M_X$. We also consider the ring $\mathscr M_X^{\hat{\mu}^r}$ when working with good $\hat\mu^r$-actions. Let $\MM_X^{\hat{\mu}}$ be the localization of $\mathscr M_X^{\hat{\mu}}$ with respect to the multiplicative family generated by the elements $1-\Lbb^n$ with $n\in\mathbb N_{>0}$. There is a natural morphism $\loc: \mathscr M_X^{\hat{\mu}}\to \MM_X^{\hat{\mu}}$, which has not been proved or disproved to be injective; however, for simplicity of notation, if necessary, we shall identify $a$ with $\loc(a)$, that is, consider $a\in \mathscr M_X^{\hat{\mu}}$ as an element of $\MM_X^{\hat{\mu}}$. 
 
For a morphism of $ k$-varieties $f: X\to X'$, one defines group morphisms $f_!: \mathscr M_X^{\hat{\mu}}\to \mathscr M_{X'}^{\hat{\mu}}$ and $f_!: \MM_X^{\hat{\mu}}\to \MM_{X'}^{\hat{\mu}}$ by composition, also defines ring morphisms $f^*: \mathscr M_{X'}^{\hat{\mu}}\to \mathscr M_X^{\hat{\mu}}$ and $f^*: \MM_{X'}^{\hat{\mu}}\to \MM_X^{\hat{\mu}}$ by fiber product. If $X'=\Spec k$, $f_!$ is usually denoted by $\int_X$.

Let $\mathscr M$ be a $\mathbb Z[\Lbb,\Lbb^{-1}]$-module, and let $\T=(T_1,\dots,T_r)$ be a multivariate. We shall consider $\mathscr M[[\T]]$ and the following sub-$\mathbb Z[\Lbb,\Lbb^{-1}]$-modules
\begin{align*}
\mathscr M[[\T]]_{\sr}\ &:=\mathscr M[\T]\left[(1-\Lbb^m\T^{\n})^{-1}\right]_{(m,\n)\in \mathbb Z\times (\mathbb N^r\setminus\{(0,\dots,0)\})},\\
\mathscr M[[\T]]_{\ssr}&:=\mathscr M[\T]\left[(1-\Lbb^m\T^{\n})^{-1}\right]_{(m,\n)\in \mathbb Z_{\leq 0}\times (\mathbb N^r\setminus\{(0,\dots,0)\})},\ \text{and}\\
\mathscr M[[\T]]_{\inte}&:=\mathscr M[\T]\left[(1-\Lbb^m\T^{\n})^{-1}\right]_{(m,\n)\in \mathbb Z_{<0}\times (\mathbb N^r\setminus\{(0,\dots,0)\})}.
\end{align*}
The identity 
$$\frac 1 {1-\Lbb^m\T^{\n}}=\sum_{l\geq 0}(\Lbb^m\T^{\n})^l$$
induces canonical embeddings of the previous modules in $\mathscr M[[\T]]$. Elements of $\mathscr M[[\T]]_{\sr}$ are called {\it rational series}, elements of $\mathscr M[[\T]]_{\ssr}$ are called {\it strongly rational series}, and elements of $\mathscr M[[\T]]_{\inte}$ are called {\it integrable series}, over $\mathscr M$. It is immediate that an integrable series is also a strongly rational series and a strongly rational series is also a rational series. The terminology ``integrable'' is inspired from the discussions of Cluckers and Loeser on integrable constructible functions in Section 4, especially Theorem 4.5.4, of their article \cite{CL}. 

In particular, if we fix a $k$-variety $X$ and let $\mathscr M$ be one of two rings $\mathscr M_X^{\hat{\mu}}$ and $\MM_X^{\hat{\mu}}$, then the previous rings can be obviously viewed as $\mathscr M$-modules. If this is the case, and if $\T$ reduces to a univariate $T$, we get that every integrable series is also {\it of finite mass} in the sense of Looijenga \cite{Loo}. Moreover, as shown in \cite{DL1}, there exists a unique $\mathscr M_X^{\hat{\mu}}$-linear morphism 
$$\lim_{T\to\infty}: \mathscr M[[T]]_{\sr}\to \mathscr M$$ 
such that $\lim_{T\to\infty}\frac{\Lbb^mT^n}{1-\Lbb^mT^n}=-1$ for any $(m,n)\in\mathbb Z\times \mathbb N_{>0}$.

%---------------------------
\subsection{Arc spaces and motivic zeta functions}\label{subsec2.2}
Let $X$ be an algebraic $k$-variety. For any $n\in\mathbb N_{>0}$, let $\mathscr L_n(X)$ be the space of $n$-jet schemes of $X$, which is a $k$-scheme representing the functor sending a $k$-algebra $A$ to the set of morphisms of $k$-schemes $\Spec(A[t]/(t^{n+1}))\to X$. For any pair $n\leq m$, the truncation defines a morphism of $k$-schemes 
$$\pi_n^m:\mathscr L_m(X)\to \mathscr L_n(X)$$ 
and this is an affine morphism. If $X$ is smooth of dimension $d$, the morphism $\pi_n^m$ is a locally trivial fibration with fiber $\mathbb{A}_k^{(m-n)d}$. The $n$-jet schemes and the morphisms $\pi_n^m$ form in a natural way a projective system of $k$-schemes, we denote its limit by $\mathscr L(X)$ and call this space {\it the arc space of $X$}. For any field extension $k\subset K$, the $K$-points of $\mathscr{L}(X)$ correspond one-to-one to the $K[[t]]$-points of $X$. %We denote by $\pi_m$ the natural morphism $\mathcal{L}(\mathscr X)\to\mathcal{L}_m(\mathscr X)$. 

Furthermore, the schemes $\mathscr L_n(X)$ and $\mathscr L(X)$ are endowed with a natural action of $\mu_n$ given by $\xi\varphi(t):=\varphi(\xi t)$. The profinite group scheme $\hat\mu$ acts on these schemes via $\mu_n$'s.

%---------------------------

Assume in the rest of this section that $X$ is a smooth algebraic $k$-variety of pure dimension $d$. Let $f:X\to\mathbb A_k^1$ be a regular function with the zero locus $X_0$. For $n\in\mathbb N_{>0}$, let $\mathscr{X}_n(f)$ be the set of arcs $\varphi\in\mathscr{L}_n(X)$ such that $f(\varphi)= t^n\mod t^{n+1}$. Since the image of $\mathscr{X}_n(f)$ under the canonical morphism $\mathscr L_n(X)\to X$ is contained in $X_0$, it is also an $X_0$-variety. Furthermore, $\mathscr{X}_n(f)$ is stable for the action of $\mu_n$ on $\mathscr L_n(X)$, thus it defines a motivic class $[\mathscr{X}_n(f)]:=[\mathscr{X}_n(f)\to X_0]$ in $\mathscr M_{X_0}^{\hat{\mu}}$. The {\it motivic zeta function of $f$} is defined as follows
\begin{align*}%\label{zeta} 
Z_f(T):=\sum_{n\geq 1}[\mathscr{X}_n(f)]\Lbb^{-nd}T^n,
\end{align*}
which lives in $\mathscr M_{X_0}^{\hat{\mu}}[[T]]$. If $x$ is a closed point in $X_0$, we define the {\it local motivic zeta function} $Z_{f,x}(T)$ to be $x^*Z_f(T)$, where $x^*$ stands for the pullback of the inclusion of $x$ in $X_0$. Clearly, the series $Z_{f,x}(T)$ is an element of $\mathscr M_k^{\hat{\mu}}[[T]]$.

\begin{theorem}[Denef-Loeser]\label{DL98}
The motivic zeta function $Z_f(T)$ is an integrable series. 
\end{theorem}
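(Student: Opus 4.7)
The plan is to reduce the problem to an explicit rational expression for $Z_f(T)$ obtained from an embedded resolution of singularities of the zero locus of $f$, and then to read off directly that all the resulting denominators have the shape required by $\MM_{X_0}^{\hat\mu}[[T]]_{\inte}$.

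First I would invoke Hironaka to choose a log resolution $h\colon Y\to X$ of the pair $(X,f^{-1}(0))$ which is an isomorphism over $X\setminus X_0$ and such that $(f\circ h)^{-1}(0)$ is a divisor with simple normal crossings; let $(E_i)_{i\in J}$ be its irreducible components, $N_i$ the multiplicity of $f\circ h$ along $E_i$, and $\nu_i-1$ the multiplicity of the relative canonical divisor along $E_i$ (so $\nu_i\ge 1$). For each nonempty $I\subset J$ set $E_I^\circ=\bigl(\bigcap_{i\in I}E_i\bigr)\setminus\bigl(\bigcup_{j\notin I}E_j\bigr)$, and form the standard Galois cover $\widetilde E_I^\circ\to E_I^\circ$ carrying a good $\hat\mu$-action that encodes the local $N_i$-th roots of the unit of $f\circ h$; the composition $\widetilde E_I^\circ\to E_I^\circ\hookrightarrow h^{-1}(X_0)\to X_0$ makes it an $X_0$-variety.

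Next I would apply the change-of-variables formula for motivic integrals on the arc space, partitioning $\mathscr L(X)$ according to the multiplicities along the $E_i$ of the lifted arc on $Y$; this is the main technical input, and the classical Denef--Loeser computation yields
\begin{equation*}
Z_f(T)=\sum_{\emptyset\neq I\subset J}[\widetilde E_I^\circ\to X_0,\sigma]\,(\Lbb-1)^{|I|-1}\prod_{i\in I}\frac{\Lbb^{-\nu_i}T^{N_i}}{1-\Lbb^{-\nu_i}T^{N_i}}
\end{equation*}
in $\mathscr M_{X_0}^{\hat\mu}[[T]]$, where $\sigma$ denotes the good $\hat\mu$-action coming from the cover. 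The main obstacle in the plan is precisely controlling this change of variables: one must show that the strata $\mathscr X_n(f)$ pull back, up to the Jacobian factor $\Lbb^{-\sum_i(\nu_i-1)m_i}$, to cylinders over $E_I^\circ$ trivialized as affine bundles, and that the $\mu_n$-action on $\mathscr X_n(f)$ matches the $\hat\mu$-action on the cover $\widetilde E_I^\circ$; this is where the bulk of the work sits.

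Finally, granting the displayed formula, integrability is immediate: each factor in the product is of the form $\Lbb^{-\nu_i}T^{N_i}/(1-\Lbb^{-\nu_i}T^{N_i})$ with $-\nu_i\in\mathbb Z_{<0}$ and $N_i\in\mathbb N_{>0}$, so it belongs to $\mathscr M_{X_0}^{\hat\mu}[T]\bigl[(1-\Lbb^{m}T^{n})^{-1}\bigr]_{(m,n)\in\mathbb Z_{<0}\times\mathbb N_{>0}}$, and therefore so does the finite $\mathscr M_{X_0}^{\hat\mu}$-linear combination $Z_f(T)$. Hence $Z_f(T)\in\mathscr M_{X_0}^{\hat\mu}[[T]]_{\inte}$, as required. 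I would also remark that the identity above shows a fortiori that $Z_f(T)$ is a strongly rational, hence rational, series, and thus admits the limit $\lim_{T\to\infty}Z_f(T)$ defining the motivic nearby fibre that will be used in later sections.
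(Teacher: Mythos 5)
Your argument is correct and follows the same route as the paper: fix a log-resolution $h\colon Y\to(X,X_0)$, derive the Denef--Loeser formula $Z_f(T)=\sum_{\emptyset\neq I}(\Lbb-1)^{|I|-1}[\widetilde E_I^\circ]\prod_{i\in I}\Lbb^{-\nu_i}T^{N_i}/(1-\Lbb^{-\nu_i}T^{N_i})$ via change of variables on the arc space, and observe that every denominator $1-\Lbb^{-\nu_i}T^{N_i}$ has $-\nu_i\in\mathbb Z_{<0}$ and $N_i\in\mathbb N_{>0}$, giving membership in $\mathscr M_{X_0}^{\hat\mu}[[T]]_{\inte}$. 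One small imprecision worth fixing: the Galois cover $\widetilde E_I^\circ\to E_I^\circ$ has group $\mu_{m_I}$ with $m_I=\gcd(N_i)_{i\in I}$, obtained by taking $m_I$-th (not $N_i$-th) roots of the inverse unit factor of $f\circ h$ on $E_I^\circ$.
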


The proof of Theorem \ref{DL98} by Denef and Loeser in \cite{DL1} uses in a crucial way invariants of a log-resolution of $X_0$. Let us now recall briefly their work with such a resolution $h: Y\to (X,X_0)$. The exceptional divisors and irreducible components of the strict transform for $h$ will be denoted by $E_i$, where $i$ is in a finite set $A$. For $\emptyset\not=I\subset A$, one puts 
$$E_I=\bigcap_{i\in I}E_i\ \ \text{and}\ \ E_I^{\circ}=E_I\setminus\bigcup_{j\not\in I}E_j.$$ 
Consider an affine covering $\{U\}$ of $Y$ such that on each piece $U\cap E_I^{\circ}\not=\emptyset$ the pullback of $f$ has the form $u\prod_{i\in I}y_i^{N_i}$ with $u$ a unit and $y_i$ a local coordinate defining $E_i$. Let $m_I$ denote $\gcd(N_i)_{i\in I}$. Denef and Loeser study the unramified Galois covering $\pi_I:\widetilde{E}_I^{\circ}\to E_I^{\circ}$ with Galois group $\mu_{m_I}$ defined locally with respect to $\{U\}$ as follows 
$$\left\{(z,y)\in \mathbb{A}_k^1\times(U\cap E_I^{\circ}) \mid z^{m_I}=u(y)^{-1}\right\}.$$ 
The local pieces are glued over $\{U\}$ as in the proof of \cite[Lemma 3.2.2]{DL1} to give $\widetilde{E}_I^{\circ}$ and $\pi_I$ as mentioned, and the definition of the covering $\pi_I$ is independent of the choice of the affine covering $\{U\}$. Furthermore, $\widetilde{E}_I^{\circ}$ is endowed with a $\mu_{m_I}$-action by multiplication of the $z$-coordinate with elements of $\mu_{m_I}$, defining a class $[\widetilde{E}_I^{\circ}]=[\widetilde{E}_I^{\circ}\to E_I^{\circ}\to X_0]$ in $\mathscr M_{X_0}^{\hat\mu}$ (cf. \cite{DL5}). For each $i\in A$, we denote by $\nu_i-1$ the multiplicity of $E_i$ in the divisor of $h^*\omega_X$, where $\omega_X$ is a local generator of the sheaf of differential forms on $X$ of maximal degree. Then Denef-Loeser's formula of motivic zeta function in terms of $h$ is the following 
\begin{align}\label{DL-form}
Z_f(T)=\sum_{\emptyset\not=I\subset A}(\mathbb{L}-1)^{|I|-1}[\widetilde{E}_I^{\circ}]\prod_{i\in I}\frac{\Lbb^{-\nu_i}T^{N_i}}{1-\Lbb^{-\nu_i}T^{N_i}},
\end{align}
which holds in $\mathscr M_{X_0}^{\widehat{\mu}}[[T]]$. This proves that $Z_f(T)$ is an integrable series.

The quantity 
$$\mathscr S_f:=-\lim_{T\to\infty}Z_f(T)=\sum_{\emptyset\not=I\subset A}(1-\mathbb{L})^{|I|-1}[\widetilde{E}_I^{\circ}]$$ 
in $\mathscr M_{X_0}^{\hat{\mu}}$ is called {\it the motivic nearby cycles of $f$}. Also, the element $\mathscr S_{f,x}:=x^*\mathscr S_f$ of $\mathscr M_k^{\hat{\mu}}$ is called {\it the motivic Milnor fiber of $f$ at $x$}. Recently, $\mathscr S_f$ and $\mathscr S_{f,x}$ have been getting more important in singularity theory because of their relations with various classical invariants, such as Euler characteristic, Hodge spectrum, monodromy zeta functions (cf. \cite{DL4, DL5}, \cite{G}, \cite{GLM1}).

More generally, we are going to consider a modification of the motivic zeta function in several variables concerning a family of functions mentioned in Guibert \cite{G}. The version with a rational polyhedral convex cone in $\mathbb N_{>0}^r$ was studied by Guibert-Loeser-Merle \cite{GLM2} for one variable with respect to an appropriate linear form on the cone. Let $\mathbf f$ be an ordered family of $r$ regular functions $f_i: X\rightarrow \mathbb A_k^1$. For simplicity of notation, we also write $X_0$ for $X_0(\mathbf f)$, the common zeros of the family $\mathbf f$. For any $\n\in \mathbb N_{>0}^r$, let $|\n|=\sum_{i=1}^rn_i$, and we define
$$\mathscr{X}_{\n}(\mathbf f):=\left\{\varphi\in\mathscr{L}_{|\n|}(X)\mid f_i(\varphi)= t^{n_i}\mod t^{n_i+1}, 1\leq i\leq r\right\}.$$
In the particular case where $X=X_1\times\cdots\times X_r$ with $X_i$ smooth algebraic $k$-varieties and, for every $1\leq i\leq r$, $f_i$ is a regular function on $X_i$, we define   
$$\mathscr{D}_{\n}(\mathbf f):=\left\{\varphi=(\varphi_1,\dots,\varphi_r)\in\mathscr{L}_{|\n|}(X)\ \begin{array}{|l} f_1(\varphi_1)= t^{n_1}\mod t^{n_1+1}\\ \ord f_i(\varphi_i)>n_i, 2\leq i\leq r\end{array}\right\}.$$
It is clear that, for every $\n\in \mathbb N_{>0}^r$, $\mathscr{X}_{\n}(\mathbf f)$ (resp. $\mathscr{D}_{\n}(\mathbf f)$) is stable under the good $\mu_{\gcd(\n)}$-action (resp. $\mu_{n_1}$-action) on the space $\mathscr{L}_{|\n|}(X)$ given by $\xi\varphi(t):=\varphi(\xi t)$ (resp. $\xi\varphi(t):=(\varphi_1(\xi t),\varphi_2(t),\dots,\varphi_r(t)$), and that $\mathscr{X}_{\n}(\mathbf f)$ (resp. $\mathscr{D}_{\n}(\mathbf f)$) admits a morphism to $X_0$. This fact thus gives rise to an element $[\mathscr{X}_{\n}(\mathbf f)]:=[\mathscr{X}_{\n}(\mathbf f)\to X_0]$ (resp. $[\mathscr{D}_{\n}(\mathbf f)]:=[\mathscr{D}_{\n}(\mathbf f)\to X_0]$) in $\mathscr{M}_{X_0}^{\hat\mu}$. 

Let $C$ be a rational polyhedral convex cone in $\mathbb N_{>0}^r$, let $\Delta$ be the special one among $C$'s which consists of $\n=(n_1,\dots,n_r)$ such that $1\leq n_1<\cdots< n_r$. Let $\T$ denote the $r$-tuple $(T_1,\dots,T_r)$ of variables, and let $\T^{\n}$ stand for $T_1^{n_1}\cdots T_r^{n_r}$. 

\begin{definition}\label{MMZF}
The {\it motivic zeta function $Z_{\f}^C(\T)$} of a family $\f$ of regular functions on $X$ is the following series in $\mathscr M_{X_0}^{\hat\mu}[[\T]]$:  
$$Z_{\f}^C(\T):=\sum_{n\in C}[\mathscr{X}_{\n}(\mathbf f)]\Lbb^{-|\n|d}\T^{\n}.$$ 
If $\f=(f_1,\dots,f_r)$ is an ordered family $\f$ of regular functions $f_i:X_i\to \mathbb A_k^1$, $1\leq i\leq r$, the {\it motivic multiple zeta function $\zeta_{\f}(\T)$} of $\f$ is the series
$$\zeta_{\f}(\T):=\sum_{n\in {\Delta}}[\mathscr{D}_{\n}(\mathbf f)]\Lbb^{-|\n|d}\T^{\n}$$ 
in $\mathscr M_{X_0}^{\hat\mu}[[\T]]$. For a closed point $x\in X_0$, we define the {\it local motivic} and the {\it local motivic multiple zeta functions} as $Z_{\f,x}^C(\T):=x^*Z_{\mathbf f}^C(\T)$ and $\zeta_{\mathbf f,x}(\T):=x^*\zeta_{\mathbf f}(\T)$, elements of $\mathscr M_k^{\hat\mu}[[\T]]$.
\end{definition}

We refer to \cite[Section 2.9]{GLM2} and \cite[Lemma 3.4]{DL2} to see that $Z_{\f}^C(\T)$ is a rational series. Indeed, we can obtain the motivic zeta function $Z_{\f}^{C,\ell}(T)$ in \cite{GLM2}, which depends on a linear form $\ell$ positive on the closure $\overline{C}$ of $C$ in $\mathbb R_{\geq 0}^r$ except at the origin, in terms of replacing $\T^{\n}$ in $Z_{\f}^C(\T)$ by $T^{\ell(\n)}$. There, Guibert, Loeser and Merle deduce the rationality of $Z_{\f}^{C,\ell}(T)$ thanks to \cite[Lemma 3.4]{DL2}, and, fortunately, their arguments are definitely applicable to the rationality of $Z_{\f}^C(\T)$. To be more precise, let us consider a log-resolution $h: Y\to (X,X_0(F))$, with $F=f_1\cdots f_r$. Assume that 
$$h^{-1}(X_0(F))=\sum_{i\in A}N_i(F)E_i\quad \text{and} \quad K_Y=h^*K_X+\sum_{i\in A}(\nu_i-1)E_i,$$ 
where $E_i$'s are irreducible components of $h^{-1}(X)$. As previous, we shall work with
$[\widetilde{E}_I^{\circ}]$ in $\mathscr M_{X_0(F)}^{\hat\mu}$ for any nonempty $I\subset A$. Denote by $N_i(f_j)$ the multiplicity of $E_i$ in the divisor of $f_j\circ h$, and by $N_i$ the vector $(N_i(f_1),\dots,N_i(f_r))\in \mathbb N^r$. Denote by $\mathscr A$ the set of all nonempty subsets $I$ of $A$ such that $h(E_I^{\circ})\subset X_0=X_0(\f)$. For any $I\in \mathscr A$, we consider the linear morphisms $N_I: \mathbb R^I\to \mathbb R^r$ and $\nu_I: \mathbb R^I\to \mathbb R$ defined as follows: for any $\alpha=(\alpha_i)_{i\in I}\in \mathbb R^I$, $N_I(\alpha):=\sum_{i\in I}\alpha_iN_i$ and $\nu_I(\alpha):=\sum_{i\in I}\alpha_i\nu_i$. Using the same method as doing with (\ref{DL-form}) we obtain a formula for $Z_{\f}^C(\T)$, which lives in $\mathscr M_{X_0}^{\hat{\mu}}[[\T]]$, as follows 
\begin{align}\label{DL-form-version2}
Z_{\f}^C(\T)=\sum_{I\in \mathscr A}(\mathbb{L}-1)^{|I|-1}[\widetilde{E}_I^{\circ}]\sum_{\k\in N_I^{-1}(C)}\Lbb^{-\nu_I(\k)}\T^{N_I(\k)}.
\end{align}
Thus $Z_{\f}^C(\T)$ is integrable, since $\sum_{\k\in N_I^{-1}(C)}\Lbb^{-\nu_I(\k)}\T^{N_I(\k)}$ is integrable, for any $I\in \mathscr A$, i.e., $Z_{\f}^C(\T)\in \mathscr M_{X_0}^{\hat{\mu}}[[\T]]_{\inte}$.

Furthermore, with $\ell$ being a linear form on $\mathbb R^r$ positive on $\overline{C}\setminus \{0\}$, where $\overline{C}$ is the closure of $C$ in $\mathbb R_{\geq 0}^r$, it follows from (\ref{DL-form-version2}) that  
\begin{align*}%\label{DL-form-version3}
Z_{\f}^{C,\ell}(T)=\sum_{I\in \mathscr A}(\mathbb{L}-1)^{|I|-1}[\widetilde{E}_I^{\circ}]\sum_{\k\in N_I^{-1}(C)}\Lbb^{-\nu_I(\k)}T^{\ell(N_I(\k))},
\end{align*}
which means that $Z_{\f}^{C,\ell}(T)$ is integrable, thus rational, and we can take its limit
\begin{align*}%\label{DL-form-version3}
\lim_{T\rightarrow\infty}Z_{\mathbf f}^{C,\ell}(T)=\sum_{I\in \mathscr A}\chi(N_I^{-1}(C))(\mathbb{L}-1)^{|I|-1}[\widetilde{E}_I^{\circ}].
\end{align*}
Observe that the element $\lim_{T\rightarrow\infty}Z_{\mathbf f}^{C,\ell}(T)$ of $\mathscr{M}_{X_0}^{\hat\mu}$ is independent of the choice of such an $\ell$, hence one usually writes $\mathscr{S}_{\mathbf f}^C$ for it. For a closed point $x\in X_0$, we define $\mathscr{S}_{\mathbf f,x}:=x^*\mathscr{S}_{\mathbf f}$, which evidently equals the limit $\lim_{T\to \infty}x^*Z_{\f}^{C,\ell}(T)$.
 
Similarly, we have

\begin{proposition}\label{MMZ}
As an element of $\MM_{X_0}^{\hat\mu}[[\T]]$ the motivic multiple zeta function $\zeta_{\f}(\T)$ is integrable, i.e, $\zeta_{\f}(\T)\in \MM_{X_0}^{\hat\mu}[[\T]]_{\inte}$.
\end{proposition}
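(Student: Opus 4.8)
The plan is to reproduce, in the product geometry underlying $\zeta_\f(\T)$, the argument that gave formula (\ref{DL-form-version2}) for $Z_\f^C(\T)$. Since $X=X_1\times\cdots\times X_r$ and each $f_i$ lives on $X_i$, for $\n=(n_1,\dots,n_r)\in\Delta$ the scheme $\mathscr D_\n(\f)$ is, as an $X_0$-variety with its $\hat\mu$-action, the external product of $C_1:=\{\varphi_1\in\mathscr L_{|\n|}(X_1)\mid f_1(\varphi_1)=t^{n_1}\bmod t^{n_1+1}\}$, which carries the $\mu_{n_1}$-action, and of $C_i:=\{\varphi_i\in\mathscr L_{|\n|}(X_i)\mid\ord f_i(\varphi_i)>n_i\}$, $2\le i\le r$, with trivial action. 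Each $X_i$ is smooth of dimension $d_i$ and the defining conditions only read the $n_i$-jet, so truncation gives $[C_1]=[\mathscr X_{n_1}(f_1)]\,\Lbb^{(|\n|-n_1)d_1}$ and $[C_i]=[\mathscr Y_{n_i}(f_i)]\,\Lbb^{(|\n|-n_i)d_i}$, where $\mathscr X_{n_1}(f_1)\subset\mathscr L_{n_1}(X_1)$ is as in Section \ref{subsec2.2} and $\mathscr Y_m(f_i):=\{\varphi\in\mathscr L_m(X_i)\mid\ord f_i(\varphi)>m\}$. With $d=\dim X=\sum_i d_i$ the powers of $\Lbb$ collapse to
\begin{equation*}
\zeta_\f(\T)=\sum_{\n\in\Delta}[\mathscr X_{n_1}(f_1)]\,\Lbb^{-n_1d_1}\ \prod_{i=2}^r\bigl([\mathscr Y_{n_i}(f_i)]\,\Lbb^{-n_id_i}\bigr)\ \T^\n ,
\end{equation*}
so the $T_1$-slice is governed by $Z_{f_1}(T_1)$ and each $T_i$-slice ($i\ge2$) by $W_{f_i}(T_i):=\sum_{m\ge1}[\mathscr Y_m(f_i)]\,\Lbb^{-md_i}T_i^m$, the only coupling being the chain $1\le n_1<\cdots<n_r$.

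I would then substitute closed forms for these slices coming from log-resolutions $h_i\colon Y_i\to(X_i,X_{i,0})$ with numerical data $N_j(f_i)$, $\nu_j$, strata $E_I^{(i),\circ}$ and Galois covers $\widetilde E_I^{(i),\circ}$. For $Z_{f_1}(T_1)$ this is (\ref{DL-form}); for $W_{f_i}(T_i)$ the motivic change of variables expresses the measure of the cylinder $\{\ord f_i>n_i\}$ as a sum over $I\subset A_i$ of $[E_I^{(i),\circ}]$ times $c_I\sum_{\k\in\mathbb N_{>0}^{I},\,N_I(\k)>n_i}\Lbb^{-\nu_I(\k)}$ (with $c_I\in\mathbb Z[\Lbb]$), and summing over $n_i$ then re-summing over $\k$ gives
\begin{equation*}
W_{f_i}(T_i)=\sum_{I\subset A_i}c_I[E_I^{(i),\circ}]\Bigl(\tfrac{T_i}{1-T_i}\sum_{\k\in\mathbb N_{>0}^{I}}\Lbb^{-\nu_I(\k)}-\tfrac{1}{1-T_i}\sum_{\k\in\mathbb N_{>0}^{I}}\Lbb^{-\nu_I(\k)}T_i^{N_I(\k)}\Bigr).
\end{equation*}
Plugging everything into the display above and carrying out the $\Delta$-restricted summation --- a finite iteration of the cone summation of \cite[Section 2.9]{GLM2} and \cite[Lemma 3.4]{DL2}, now with the additional strict inequalities $n_i<N_I(\k_i)$ and $n_{i-1}<n_i$ --- exhibits $\zeta_\f(\T)$ as a finite sum, indexed by tuples $(I_1,\dots,I_r)$, of the classes $[\widetilde E_{I_1}^{(1),\circ}\times\prod_{i\ge2}E_{I_i}^{(i),\circ}]\in\mathscr M_{X_0}^{\hat\mu}$ --- the $\hat\mu$-action is carried by the first factor only, consistently with the statement --- multiplied by rational functions in $\Lbb$ and $\T$.

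Finally I would check that these rational functions are integrable once $1-\Lbb^n$ is inverted, i.e.\ that they lie in $\MM_{X_0}^{\hat\mu}[[\T]]_\inte$. The purely geometric pieces produce only denominators $1-\Lbb^{-\nu_j}T_i^{N_j}$ with $\nu_j\ge1$, which is the integrable shape; the open conditions $\ord f_i>n_i$ and the chain $n_1<\cdots<n_r$ instead contribute finite geometric sums, hence denominators $1-T_i$ (exponent $m=0$, not integrable) and, after the $\k$-summation, pure factors $\prod_j(1-\Lbb^{-\nu_j})$. The latter are precisely the elements inverted in passing from $\mathscr M_{X_0}^{\hat\mu}$ to $\MM_{X_0}^{\hat\mu}$, which is the technical reason the statement is about $\MM$ rather than $\mathscr M$; the former I would remove by checking that the numerator of each aggregated $T_i$-contribution vanishes at $T_i=1$ --- the elementary identity $\tfrac{T_i-T_i^N}{1-T_i}=T_i+\cdots+T_i^{N-1}$ is why the individual ``bad'' pieces recombine into polynomials before the $\k$-sum. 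The main obstacle is exactly this last bookkeeping: organizing the iterated $\Delta$-summation so that every $m=0$ denominator is seen to cancel, leaving only denominators $1-\Lbb^m\T^\n$ with $m<0$ and $\n\neq(0,\dots,0)$ together with factors that are units in $\MM_{X_0}^{\hat\mu}$.
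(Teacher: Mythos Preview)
Your approach is essentially the paper's first proof (sketched immediately after the statement): express $\zeta_\f(\T)$ via log-resolution and reduce integrability to an exponential sum over a rational polyhedral region, where the paper packages your ``bookkeeping obstacle'' into the integrability of the series $S(I;\T)$ in (\ref{IntSeries}) and dispatches it by citing a generalization of \cite[Lemma~8.5.2]{HL}; your use of separate resolutions $h_i$ instead of a single $h:Y\to(X,X_0(f_1\cdots f_r))$ is only an organizational variant, since $\prod_i h_i$ is such a resolution and your product strata $\widetilde E_{I_1}^{(1),\circ}\times\prod_{i\ge2}E_{I_i}^{(i),\circ}$ are exactly the paper's modified $\widetilde E_I^{\circ}$ (with $m'_I$ in place of $m_I$). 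The paper also supplies a second, cleaner argument in Corollary~\ref{integrabilitymmzf} that avoids the polyhedral bookkeeping entirely: one checks that $Z_\f^{\mathbb N_{>0}^r}(\T)$ is integrable as an external Hadamard product of the $Z_{f_i}(T_i)$, restricts to $\Delta_<$ via Lemma~\ref{lm54}, and then applies the operator $\Phi^{-1}$ of Lemma~\ref{lm55}, whose proof absorbs once and for all the $1-T_i$ cancellations you were tracking by hand.
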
 

To prove this proposition we may compute directly the series $\zeta_{\f}(\T)$ in terms of a resolution of singularity as done for $Z_{\f}^C(\T)$ in (\ref{DL-form-version2}), with a slight modification. More precisely, using the previous notation $\widetilde{E}_I^{\circ}$ but changing the definition so that $m_I=\gcd(N_i)_{i\in I}$ is replaced by $m'_I=\gcd(N_i)_{i\in I_1}$, where $I_1$ is the subset of $i\in I$ coming from resolution of $\{f_1=0\}$, and by convenience, $m'_I=1$ if $I_1=\emptyset$, we get
\begin{align*}
\zeta_{\f}(\T)=\sum_{I\in \mathscr A}(\mathbb{L}-1)^{|I|-1}[\widetilde{E}_I^{\circ}]\sum_{\beta\in N_I^{-1}(\{0\}\times \mathbb N_{>0}^{r-1})}\sum_{\begin{smallmatrix}\k'\in N_I^{-1}(\Delta)\\ k'_i>-\beta_i,i\in I\end{smallmatrix}}\Lbb^{-\nu_I(\k'+\beta)}\T^{N_I(\k')}.
\end{align*}
Remark that, in this setting, although $\k'$ and $\beta$ are rational vectors, $\k'+\beta$ are positively integral vectors. A generalization of Lemma 8.5.2 of \cite{HL} (see also \cite[Section 2.9]{GLM1}) shows that 
\begin{align}\label{IntSeries}  
S(I;\T):= \sum_{\beta\in N_I^{-1}(\{0\}\times \mathbb N_{>0}^{r-1})}\sum_{\begin{smallmatrix}\k'\in N_I^{-1}(\Delta)\\ k'_i>-\beta_i,i\in I\end{smallmatrix}}\Lbb^{-\nu_I(\k'+\beta)}\T^{N_I(\k')},
\end{align}
viewed as a series in $\MM_{X_0}^{\hat\mu}[[\T]]$, is integrable. This proves $\zeta_{\f}(\T)\in \MM_{X_0}^{\hat\mu}[[\T]]_{\inte}$. We also notice that we shall provide another proof for Proposition \ref{MMZ} in Corollary \ref{integrabilitymmzf}.

Furthermore, in terms of (\ref{IntSeries}), we get the following formula
$$
\lim_{T\to \infty}S(I;T,\dots,T)=\chi(N_I^{-1}(\{0\}\times \mathbb N_{>0}^{r-1}))\chi(N_I^{-1}(\Delta)),
$$
which allows to compute the limit $\lim_{T\to \infty}\zeta_{\f}(T,\dots,T)$ of the motivic multiple zeta function of $\f$ (see Definition \ref{limzeta}).
%\end{proof}

\begin{definition}\label{limzeta}
The {\it motivic multiple nearby cycles of the family $\f$} in Proposition \ref{MMZ}, denoted by $\mathscr S_{\f}$, is defined to be the element 
$$-\lim_{T\rightarrow\infty}\zeta_{\f}(T,\dots,T)=-\sum_{I\in \mathscr A}\chi(N_I^{-1}(\{0\}\times \mathbb N_{>0}^{r-1}))\chi(N_I^{-1}(\Delta))(\mathbb{L}-1)^{|I|-1}[\widetilde{E}_I^{\circ}]$$ 
of the ring $\MM_{X_0}^{\hat\mu}$. For a closed point $x\in X_0$, we set 
$$\mathscr{S}_{\f,x}:=(\{x\}\hookrightarrow X_0)^*\mathscr{S}_{\f}$$ 
and call it the {\it local motivic multiple nearby cycles of $\f$ at $x$}.
\end{definition}

%*****************************

\section{Hadamard products and $\boxast$-product}
\subsection{Convolution and Hadamard products}\label{convolution}
The standard concept of convolution product on the monodromic Grothendieck rings of algebraic varieties was given earlier in \cite{DL3}, \cite{Loo} and \cite{GLM1}. To recall it explicitly, let us consider the Fermat varieties $F_0^n$ and $F_1^n$ in $\mathbb G_{m,k}^2$ defined by the equations $u^n+v^n=0$ and $u^n+v^n=1$, respectively. Note that the varieties $F_0^n$ and $F_1^n$ admit the obvious action of $\mu_n\times\mu_n$. 

Let $X$, $Y$ and $Z$ be algebraic $k$-varieties endowed with good $\mu_n$-action. Assume that there are $\mu_n$-equivariant morphisms $Y\to X$ and $Z\to X$. Define operations in $\mathscr M_X^{\mu_n}$ as follows
\begin{gather*}
[Y\to X]\ast_0[Z\to X]:=[F_0^n\times^{\mu_n\times\mu_n}(Y\times_X Z)],\\
[Y\to X]\ast_1[Z\to X]:=[F_1^n\times^{\mu_n\times\mu_n}(Y\times_X Z)],\\
[Y\to X]\ast [Z\to X]:= [Y\to X]\ast_0[Z\to X]-[Y\to X]\ast_1[Z\to X],
\end{gather*}
where, for $i\in\{0,1\}$, $F_i^n\times^{\mu_n\times\mu_n}(Y\times_X Z)$ is the quotient of $F_i^n\times(Y \times_X Z)$ with respect to the equivalence relation by which any two elements $(\xi u,\eta v,x,y)$ and $(u,v,\xi x,\eta y)$ are equivalent, for $\xi, \eta \in\mu_n$. The group scheme $\mu_n$ acts diagonally on $F_i^n\times^{\mu_n\times\mu_n}(Y\times_X Z)$. Then passing through the projective limit with respect to $n\in\mathbb N_{>0}$ we get the {\it (standard) convolution product $\ast$} on $\mathscr M_X^{\hat{\mu}}$. We can also extend the $\ast$-product to $\MM_X^{\hat{\mu}}$ in a natural way. By \cite[Proposition 5.2]{GLM1}, the convolution product $\ast$ is commutative and associative.

Let $X$, $Y$, $Z$ and $W$ be algebraic $k$-varieties which are endowed with good $\hat\mu$-action and admit $\hat\mu$-equivariant morphisms $Z\to X$ and $W\to Y$ (we may choose the trivial action of $\hat\mu$ on the bases $X$ and $Y$). The cartesian product induces a morphism of rings $\mathscr M_X^{\hat{\mu}}\times \mathscr M_Y^{\hat{\mu}}\to \mathscr M_{X\times Y}^{\hat{\mu}^2}$, by which the diagonal action induces naturally a canonical morphism $\mathscr M_{X\times Y}^{\hat{\mu}^2}\to \mathscr M_{X\times Y}^{\hat{\mu}}$. Then the composition of these morphisms yields an {\it external product}
\begin{align}\label{form3.1}
\mathscr M_X^{\hat{\mu}}\times \mathscr M_Y^{\hat{\mu}}\to \mathscr M_{X\times Y}^{\hat{\mu}},
\end{align}
where, by abuse of notation, we also denote it by $\times$. As previous, we let $\T$ be an $r$-tuple of variables. The {\it (external) Hadamard $\times_{\H}$-product of two series $a(\T)=\sum_{\n\in\mathbb N^r}a_{\n}\T^{\n}$ in $\mathscr M_X^{\hat\mu}[[\T]]$ and $b(\T)=\sum_{\n\in\mathbb N^r}b_{\n} \T^{\n}$ in $\mathscr M_Y^{\hat\mu}[[\T]]$} is the series
\begin{align}\label{form3.2}
a(\T)\times_{\H} b(\T):=\sum_{\n \in\mathbb N^r}a_{\n}\times b_{\n}\T^{\n}
\end{align}
in $\mathscr M_{X\times Y}^{\hat\mu}[[\T]]$. This product is commutative, and it is also associative in the following sense, where the verification is straightforward. If $a(\T)$ is in $\mathscr M_X^{\hat\mu}[[\T]]$, $b(\T)$ is in $\mathscr M_Y^{\hat\mu}[[\T]]$ and $c(\T)$ is in $\mathscr M_Z^{\hat\mu}[[\T]]$, then the identity
\begin{align}\label{form3.3}
\left(a(\T)\times_{\H} b(\T)\right)\times_{\H} c(\T)=a(\T)\times_{\H} \left(b(\T)\times_{\H} c(\T)\right)
\end{align}
holds in $\mathscr M_{X\times Y\times Z}^{\hat\mu}[[\T]]$. It is stated similarly as in Lemma 7.6 of \cite{Loo} that, in the univariate case (i.e., $r=1$), the $\times$-product is ``anti-compatible'' with the Hadamard $\times_{\H}$-product via the morphism $\lim_{T\to\infty}$. Namely, if $a(T)$ is in $\mathscr M_X^{\hat\mu}[[T]]_{\sr}$ and $b(T)$ is in $\mathscr M_Y^{\hat\mu}[[T]]_{\sr}$, then $a(T)\times_{\H} b(T)$ is in $\mathscr M_{X\times Y}^{\hat\mu}[[T]]_{\sr}$ and the identity
\begin{align}\label{form3.4}
\lim_{T\to\infty}\left(a(T)\times_{\H} b(T)\right)=-\Big(\lim_{T\to\infty}a(T)\Big) \times \Big(\lim_{T\to\infty}b(T)\Big)
\end{align}
holds in $\mathscr M_{X\times Y}^{\hat\mu}$. An analogous assertion for an arbitrary $r$ is also true when we replace the morphism $\lim_{T\to\infty}$ by the morphism $\lim_{T_1=\cdots=T_r\to\infty}$, the composition of $\lim_{T\to\infty}$ and the assignment $T=T_1=\cdots=T_r$.% The latter can be defined in an obvious way based on $\lim_{T\to\infty}$.

The previous external product also deduces naturally the following external product, which we again denote by $\times$,
\begin{align*}%\label{form3.5}
\MM_X^{\hat{\mu}}\times \MM_Y^{\hat{\mu}}\to \MM_{X\times Y}^{\hat{\mu}}.
\end{align*}
This product has the same properties as the previous ones that we have mentioned.

Let us now introduce a generalized (external) convolution product of the previous standard one. Using the external product, the {\it generalized (external) convolution product}
$$\ast: \mathscr M_X^{\hat{\mu}}\times \mathscr M_Y^{\hat{\mu}}\to \mathscr M_{X\times Y}^{\hat{\mu}}$$
(again by abuse of notation) is defined as follows
\begin{align*}
[Z\to X]\ast_0[W\to Y]:= &\left([Z\to X]\times[Y\to Y]\right)\ast_0\left([X\to X]\times[W\to Y]\right),\\
[Z\to X]\ast_1[W\to Y]:= &\left([Z\to X]\times[Y\to Y]\right)\ast_1\left([X\to X]\times[W\to Y]\right),\\
[Z\to X]\ast [W\to Y]:= &[Z\to X]\ast_0[W\to Y]-[Z\to X]\ast_1[W\to Y].
\end{align*}
The {\it Hadamard $\ast_{\H}$-product of two formal series $a(\T)=\sum_{\n\in\mathbb N^r}a_{\n}\T^{\n}\in\mathscr M_X^{\hat\mu}[[\T]]$ and $b(\T)=\sum_{\n\in\mathbb N^r}b_{\n} \T^{\n}\in\mathscr M_Y^{\hat\mu}[[\T]]$} is the formal series
\begin{equation}\label{form3.5}
\begin{aligned}
a(\T)\ast_{\H} b(\T):= \sum_{\n \in\mathbb N^r}a_{\n}\ast b_{\n}\T^{\n}
%\Big(\text{resp.}\quad a(\T)\ast_{i,\H} b(\T):= &\sum_{\n \in\mathbb N^r}a_{\n}\ast_i b_{\n}\T^{\n},\ i\in\{0,1\}\Big)
\end{aligned}
\end{equation}
in $\mathscr M_{X\times Y}^{\hat\mu}[[\T]]$. The associativity of the Hadamard product $\ast_{\H}$ is obtained from that of the convolution product $\ast$. Similarly to \cite[Lemma 7.6]{Loo}, the $\ast$-product is anti-compatible with the Hadamard product $\ast_{\H}$-product via the morphism $\lim_{T_1=\cdots=T_r\to\infty}$. Namely, for $r=1$ for instance, if $a(T)$ is in $\mathscr M_X^{\hat\mu}[[T]]_{\sr}$ and $b(T)$ is in $\mathscr M_Y^{\hat\mu}[[T]]_{\sr}$, then $a(T)\ast_{\H} b(T)$ is in $\mathscr M_{X\times Y}^{\hat\mu}[[T]]_{\sr}$, and moreover,
\begin{align}\label{form3.7}
\lim_{T\to\infty}\left(a(T)\ast_{\H} b(T)\right)=-\Big(\lim_{T\to\infty}a(T)\Big) \ast \Big(\lim_{T\to\infty}b(T)\Big).
\end{align}

The external convolution product can be extended to the following
\begin{align*}%\label{form3.5}
\ast: \MM_X^{\hat{\mu}}\times \MM_Y^{\hat{\mu}}\to \MM_{X\times Y}^{\hat{\mu}},
\end{align*}
which remains the properties mentioned previously.

%---------------

\subsection{The $\boxast$-product of integrable univariate series}\label{keyproduct}
Let $X$ and $Y$ be two algebraic $k$-varieties, and let $T$ and $U$ be univariates. In this paragraph, we introduce a new product of two integrable series $a(T)\in \MM_X^{\hat\mu}[[T]]_{\inte}$ and $b(U)\in \MM_Y^{\hat\mu}[[U]]_{\inte}$, which is an element of $\MM_X^{\hat\mu}[[T,U]]_{\inte}$ and commutes with the morphism $\lim_{T=U\to\infty}$.  

We use the augmentation map $\mathscr M_X^{\hat\mu}\to \mathscr M_X$ defined in \cite[Section 5]{Loo}, with remark that in the context of $\hat\mu$ the characteristic zero ground field is not necessarily algebraically closed. There is a more effective way to obtain a generalized augmentation map $\mathscr M_X^{\hat\mu^r}\to \mathscr M_X^{\hat\mu^{r-1}}$ using \cite[Proposition 2.6, Section 3.10]{GLM1}. The image of an element $z\in \mathscr M_X^{\hat\mu}$ under the augmentation map will be denoted by $z'$.

\begin{definition}\label{2-product}
The {\it $\boxast$-product of the series $a(T)=\sum_{n\geq 1}a_nT^n$ and $b(U)=\sum_{m\geq 1}b_mU^m$} is defined as follows 
\begin{align*}
a(T)\boxast b(U):=\sum_{n,m\geq 1}c_{n,m}T^nU^m \ \in \MM_{X\times Y}^{\hat\mu}[[T,U]],
\end{align*}
where $c_{n,m}$ equals
\begin{equation*}
\begin{cases}
(\Lbb-1)\sum_{l>m}a_n\times b'_l & \text{if}\ n<m,\\
(\Lbb-1)\sum_{l>n}a'_l\times b_m & \text{if}\ n>m,\\
-a_n\ast b_n+\sum_{l\leq n}\mathbb L^{l-n}a_l\ast_0 b_l+(\Lbb-1)\sum_{l>n}(a_n\times b'_l+a'_l\times b_n) & \text{if}\ n=m.
\end{cases}
\end{equation*}
\end{definition}

Remark that the integrability of $a(T)$ and $b(U)$ implies that $a(T)\boxast b(U)$ is well defined. Indeed, since $a(T)$ and $b(U)$ are integrable, they are of finite mass, a condition guarantees that $\sum_{l>n}a_l$ and $\sum_{l>m}b_l$ make sense and belong to $\MM_X^{\hat\mu}$ and $\MM_Y^{\hat\mu}$, respectively.

\begin{theorem}\label{commutativity}
The $\boxast$-product preserves the integrability and commutes with the limit of integrable series. More precisely, if $a(T)$ is in $\MM_X^{\hat\mu}[[T]]_{\inte}$ and $b(U)$ is in $\MM_Y^{\hat\mu}[[U]]_{\inte}$, then $a(T)\boxast b(U)$ is in $\MM_{X\times Y}^{\hat\mu}[[T,U]]_{\inte}$, and 
$$\lim_{T=U\to \infty} \left(a(T)\boxast b(U)\right)=\lim_{T\to \infty} a(T) \ast \lim_{U\to \infty}b(U).$$
\end{theorem}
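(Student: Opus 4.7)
The plan is to decompose $a(T)\boxast b(U)$ according to the three cases of Definition~\ref{2-product}, namely
\[
a(T)\boxast b(U)=S_{<}+S_{>}+S_{=},
\]
where $S_{<}$, $S_{>}$, $S_{=}$ collect the coefficients $c_{n,m}$ with $n<m$, $n>m$, $n=m$ respectively, and to treat integrability and the limit identity piece by piece. The tails $\sum_{l>m}b'_{l}$ and $\sum_{l>n}a'_{l}$ that appear in the definition of $\boxast$ lie in $\MM_{Y}^{\hat\mu}$ and $\MM_{X}^{\hat\mu}$ respectively, because by hypothesis $a(T)$ and $b(U)$ are of finite mass.

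For integrability, I would rewrite
\[
S_{<}=(\Lbb-1)\sum_{n\geq1}a_{n}T^{n}\sum_{m>n}B'_{m}U^{m},\qquad B'_{m}:=\sum_{l>m}b'_{l}.
\]
The inner $U$-series is integrable because it is a double tail of the integrable series $b'(U)$, and the outer factor is the integrable $a(T)$. A symmetric argument handles $S_{>}$. The diagonal $S_{=}$ splits into three sub-pieces: the Hadamard-diagonal $-\sum_{n\geq1}(a_{n}\ast b_{n})(TU)^{n}$, which is the $\ast_{\H}$-product of $a$ and $b$ in the single variable $TU$; the $\ast_{0}$-sub-piece $\sum_{n\geq1}(TU)^{n}\sum_{l\leq n}\Lbb^{l-n}(a_{l}\ast_{0}b_{l})$, which after the substitution $j=n-l$ is the Cauchy product in $TU$ of the integrable series $\sum_{l\geq1}(a_{l}\ast_{0}b_{l})(TU)^{l}$ with the geometric factor $(1-\Lbb^{-1}TU)^{-1}$; and the two $(\Lbb-1)$-tails handled as in $S_{<},S_{>}$. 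Each sub-piece is integrable, so $a(T)\boxast b(U)\in\MM_{X\times Y}^{\hat\mu}[[T,U]]_{\inte}$.

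For the limit identity, the $\MM$-bilinearity of $\boxast$ (every building block $\times$, $\ast$, $\ast_{0}$ and the augmentation is $\MM$-linear) together with the $\MM$-linearity of $\lim_{T=U\to\infty}$ on rational series reduce the verification to the case of basic integrable series
\[
a(T)=\alpha\,\frac{\Lbb^{p}T^{N}}{1-\Lbb^{p}T^{N}},\qquad b(U)=\beta\,\frac{\Lbb^{q}U^{M}}{1-\Lbb^{q}U^{M}},
\]
with $\alpha\in\MM_{X}^{\hat\mu}$, $\beta\in\MM_{Y}^{\hat\mu}$, $N,M\geq1$, $p,q<0$. In this case $\lim a(T)=-\alpha$, $\lim b(U)=-\beta$, so the right-hand side is $(-\alpha)\ast(-\beta)=\alpha\ast\beta$. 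After substituting $U=T$ on the left, each sub-piece becomes an explicit rational function of $T$; the Hadamard-diagonal sub-piece of $S_{=}$ contributes $\alpha\ast\beta$ via the anti-compatibility~\eqref{form3.7} of $\lim$ with the $\ast_{\H}$-product, and the $\ast_{0}$-sub-piece vanishes in the limit since it is (up to a prefactor in $\MM$) the difference of two basic fractions that have the same limit $-1$.

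The main obstacle will be exhibiting the exact cancellation of the $(\Lbb-1)$-tail sub-pieces from $S_{<}$, $S_{>}$ and $S_{=}$. These produce nontrivial limit contributions involving $\alpha\times\beta'$ and $\alpha'\times\beta$, which must sum to zero. This cancellation is forced by a Fermat-type identity relating the convolution $\ast$, the external product $\times$ and the augmentation---ultimately the same geometric content that underlies the definition of the convolution product of Section~\ref{convolution}---and it is precisely this identity that dictates the prefactors $(\Lbb-1)$ and $\Lbb^{l-n}$ in Definition~\ref{2-product}. Book-keeping this cancellation uniformly across $S_{<}$, $S_{>}$ and $S_{=}$, while handling the explicit limits of the products of rational functions of $T$ that arise, carries the main technical weight of the proof.
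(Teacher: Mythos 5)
Your proposed reduction to ``basic integrable series'' $\alpha\,\Lbb^{p}T^{N}/(1-\Lbb^{p}T^{N})$ is a genuine departure from the paper's argument, and I see two problems with it.

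First, the reduction itself is not justified. To use bilinearity of $\boxast$ and $\MM$-linearity of $\lim$, you would need every element of $\MM_X^{\hat\mu}[[T]]_{\inte}$ to be a finite $\MM_X^{\hat\mu}$-linear combination of basic fractions of the stated form (and of monomials $T^N$). But $\MM_X^{\hat\mu}[[T]]_{\inte}$ is generated as an $\MM_X^{\hat\mu}$-algebra by $T$ and the $(1-\Lbb^{m}T^{n})^{-1}$, so a typical element is a polynomial in those generators, e.g.\ $(1-\Lbb^{m_1}T^{n_1})^{-1}(1-\Lbb^{m_2}T^{n_2})^{-1}$. Over a field one could do partial fractions, but over the ring $\MM_X^{\hat\mu}$ there is no such decomposition in general; you'd have to address this, and the paper avoids it by working directly with arbitrary coefficient sequences $(a_n)$, $(b_m)$.

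Second, and more seriously, you defer exactly the step that carries all the weight: the cancellation of the $(\Lbb-1)$-tail contributions. You gesture at a ``Fermat-type identity,'' but that points at the definition of the convolution $\ast$, which is not what is needed here. The identity that actually makes the tails cancel is much more elementary: in $\MM_{X\times Y}^{\hat\mu}$ one has
\[
a_n\times b'_m \;=\; a'_n\times b_m,
\]
because $(\Lbb-1)a_n\times b'_m$ and $(\Lbb-1)a'_n\times b_m$ are both the image of $(\Lbb-1)a_n\times b_m$ under the natural map $\mathscr M_{X\times Y}^{\hat\mu^2}\to\mathscr M_{X\times Y}^{\hat\mu}$, and $\Lbb-1$ is invertible in $\MM$. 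Once you have this identity, one symmetrises the sums $B_1$ and $B_2$ (in the paper's notation), re-expresses $B_1+B_2$ as a sum of terms each of whose $\lim_{T\to\infty}$ vanishes (factors like $b'(T)/(1-T)$ have limit $0$), and the cancellation falls out. Without identifying this identity and performing the (nontrivial, but explicit) bookkeeping, your argument has a gap precisely where the theorem is hard. The treatment of $A_1$ via~\eqref{form3.7} and the vanishing of the $A_2$-type piece (factoring out $\sum_{n\geq 0}\Lbb^{-n}T^{2n}$, whose limit is $0$) are in line with the paper and are fine.
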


\begin{proof}
The first statement that $a(T)\boxast b(U)$ is in $\MM_{X\times Y}^{\hat\mu}[[T,U]]_{\inte}$ if $a(T)$ is in $\MM_X^{\hat\mu}[[T]]_{\inte}$ and $b(U)$ is in $\MM_Y^{\hat\mu}[[U]]_{\inte}$ will be proved in the general case for several variables in Theorem \ref{preservingInt}. 

Let us prove the second one. Write the series $a(T)$, $b(U)$ and $a(T)\boxast b(U)$ as $\sum_{n\geq 1}a_n T^n$, $\sum_{m\geq 1}b_m U^m$ and $\sum_{n,m}c_{n,m}T^nU^m$, respectively. Take $T=U$ in $a(T)\boxast b(U)$ so that the resulting series can be written as 
$$\sum_{n,m} c_{n,m} T^{n+m}=A_1+A_2+(\mathbb{L}-1)(B_1+B_2),$$
where, by definition, 
\begin{gather*}
A_1=-\sum_{n\geq 1}a_n\ast b_n T^{2n},\quad A_2=\sum_{n\geq 1}\left(\sum_{l\leq n}\mathbb{L}^{l-n} a_l\ast_0 b_l\right)T^{2n},\\
B_1=\sum_{1\leq n\leq m}\left(a_n\times \sum_{l>m}b'_l\right)T^{n+m},\quad B_2=\sum_{1\leq m\leq n}\left(\sum_{l>n} a'_l\times b_m\right)T^{n+m}.
\end{gather*}
Here the integrability of $a(T)$ and $b(U)$ implies that $\sum_{l>n} a'_l$ converges in $\MM_X$ and $\sum_{l>m}b'_l$ converges in $\MM_Y$. The first limit is computed to be 
$$\lim_{T\to \infty} A_1=\lim_{T\to \infty} a(T^2) \ast \lim_{T\to \infty}b(T^2)=\lim_{T\to \infty} a(T) \ast \lim_{T\to \infty}b(T)$$ 
by means of (\ref{form3.7}). It is quite easy to obtain that  
\begin{align*}
\lim_{T\to \infty} A_2&=\lim_{T\to \infty} \sum_{n\geq 1}\left(\sum_{l=1}^{n}\mathbb (\Lbb^{-1})^{n-l} a_l\ast_0 b_l\right)T^{2n}\\
&=\left(\lim_{T\to \infty}\sum_{n\geq 0}\mathbb L^{-n} T^{2n}\right)\cdot\left(\lim_{T\to\infty}\sum_{n\geq 1} a_n\ast_0 b_n T^{2n}\right)
\end{align*}
which vanishes in $\MM_{X\times Y}^{\hat\mu}$, since $\lim_{T\to \infty}\sum_{n\geq 0}\mathbb L^{-n} T^{2n}$ vanishes in $\MM_{X\times Y}^{\hat\mu}$. The limits of $B_1$ and $B_2$ require more computations. It is verified in $\MM_{X\times Y}^{\hat\mu}[[T]]_{\inte}$ that
\begin{align*}
B_1&=\sum_{1\leq n\leq m}\left(a_n\times b'(1)\right)T^{n+m}-\sum_{1\leq n\leq m}\left(a_n\times \sum_{1\leq l\leq m}b'_l\right)T^{n+m}\\
&=\sum_{n\geq 1}  a_n \times b'(1)\sum_{1\leq m\leq n}T^{n+m} -\sum_{1\leq n\leq m}\left(a_n\times \sum_{1\leq l\leq m}b'_l\right)T^{n+m}\\
&= \frac{a(T^2)}{1-T}\times b'(1)-\sum_{1\leq n\leq m}\left(a_n\times \sum_{1\leq l\leq m}b'_l\right)T^{n+m},
%\frac{a(T^2)}{1-T}\times_{\H}\left(\frac{b(1)}{1-T}\sum_{n\geq 1}T^{2n}\right)
\end{align*}
and, similarly, that
\begin{align*}
B_2=\left(\frac{a'(1)}{1-T}\sum_{n\geq 1}T^{2n}\right)\times_{\H}\frac{b(T^2)}{1-T}-\sum_{1\leq m\leq n}\left(\sum_{l\leq n}a'_l\times b_m\right)T^{n+m},
\end{align*}
where $a'(T):=\sum_{n\geq 1}a'_nT^n$ and $b'(U):=\sum_{m\geq 1}b'_mU^m$. Note that we can extend naturally the augmentation map to a map $\mathscr M_{X\times Y}^{\hat\mu^2}\to \mathscr M_{X\times Y}^{\hat\mu}$, from which, for every $m$ and $n$, the elements $(\Lbb-1)a_n\times b'_m$ and $(\Lbb-1)a'_n\times b_m$ in $\mathscr M_{X\times Y}^{\hat\mu}$ coincide, since both are the image of $(\Lbb-1)a_n\times b_m$ in $\mathscr M_{X\times Y}^{{\hat\mu}^2}$. We may also refer to \cite[Proposition 2.6, Section 3.10]{GLM1} to see this fact. In $\MM_{X\times Y}^{\hat\mu}$, because $\Lbb-1$ is invertible, the identity $a_n\times b'_m=a'_n\times b_m$ holds. Thus, for each $\kappa\geq 1$, by combinatoric computation, we obtain the following identity in $\MM_{X\times Y}^{\hat\mu}$:
\begin{align*}
\sum_{n+m=\kappa}&\left(\sum_{1\leq n,j\leq m} a_n\times b'_j +\sum_{1\leq m,i\leq n} a'_i\times b_m\right)\\
&=\sum_{n+m=\kappa}\left(\sum_{1\leq n,j\leq m} a_n\times b'_j +\sum_{1\leq m,i\leq n} a_i\times b'_m\right)\\
&=\sum_{n+m=\kappa}\sum_{j\leq m} a_n\times b'_j+\sum_{i\leq \lfloor \frac{\kappa} 2\rfloor}a_i\times \sum_{j\leq \lfloor \frac{\kappa} 2\rfloor}b'_j,
\end{align*}
where $\lfloor \frac{\kappa} 2\rfloor$ is the integer part of $\frac{\kappa} 2$. It implies that, in $\MM_{X\times Y}^{\hat\mu}[[T]]_{\inte}$,
\begin{align*}
B_1+B_2&= \frac{a(T^2)}{1-T}\times b'(1)+a'(1)\times \frac{b(T^2)}{1-T}-\sum_{l\geq 1}a_l T^l\times \sum_{l\geq 1}\Big(\sum_{m\leq l}b'_m\Big)T^l\\
&\quad\quad -(1+T) \sum_{l\geq 1}\Big(\sum_{n\leq l}a_n\Big)T^{2l}\times_{\H} \sum_{l\geq 1}\Big(\sum_{m\leq l}b'_m\Big)T^{2l}\\
&= \frac{a(T^2)}{1-T}\times b'(1)+a'(1)\times \frac{b(T^2)}{1-T}-a(T) \times \frac{b'(T)}{1-T}\\
&\quad\quad -(1+T) \left(\frac{a(T^2)}{1-T^2} \times_{\H} \frac{b'(T^2)}{1-T^2}\right),
\end{align*}
since 
$$\sum_{\kappa\geq 1}\Big(\sum_{i\leq \lfloor \frac{\kappa} 2\rfloor}a_i\times \sum_{j\leq \lfloor \frac{\kappa} 2\rfloor}b'_j\Big)T^{\kappa}=(1+T) \sum_{l\geq 1}\Big(\sum_{n\leq l}a_n\times \sum_{m\leq l}b'_m\Big)T^{2l}.$$
Here, for any two series $\alpha(T)\in \MM_X^{\hat\mu}[[T]]$ and $\beta(T)\in \MM_Y^{\hat\mu}[[T]]$, by $\alpha(T)\times \beta(T)$ we mean the usual product of formal series in which the multiplication for the coefficients uses the external product $\times$. Now it is easy to obtain the vanishing of $\lim_{T\to \infty}(B_1+B_2)$ in $\MM_{X\times Y}^{\hat\mu}$, and the theorem is proved.
\end{proof}

%*************************************************************
\section{A motivic analogue of the Euler reflexion formula}
\subsection{Main theorem}

In this paragraph we state and prove an analogue of the Euler reflexion formula for motivic zeta functions, the most important result of the present article.

\begin{theorem}\label{thm21}
Let $X$ and $Y$ be smooth algebraic $k$-varieties, let $f$ and $g$ be regular functions on $X$ and $Y$ with the zero loci $X_0$ and $Y_0$, respectively. Define a function $f\oplus g$ on $X\times Y$ by $f\oplus g(x,y)=f(x)+g(y)$. Let $\iota$ be the inclusion of $X_0\times Y_0$ in $X\times Y$. Then the following identity
$$\zeta_{f}(T)\boxast \zeta_{g}(U)=\zeta_{f,g}(T,U)+\zeta_{g,f}(U,T)+\iota^*\zeta_{f\oplus g}(TU)$$
holds in $\MM_{X_0\times Y_0}^{\hat\mu}[[T,U]]$. It is called the {\rm motivic Euler reflexion formula} for $(f,g)$.
\end{theorem}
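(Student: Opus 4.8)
The plan is to compute both sides of the claimed identity coefficient-by-coefficient in the bivariate series ring $\MM_{X_0\times Y_0}^{\hat\mu}[[T,U]]$, and to match the coefficient of $T^nU^m$ for each pair $(n,m)\in\mathbb N_{>0}^2$. Because the right-hand side naturally splits along the three regions $n<m$, $n>m$, $n=m$ of the index lattice — the term $\zeta_{f,g}(T,U)$ contributes only when $n<m$ (its exponents lie in $\Delta$), the term $\zeta_{g,f}(U,T)$ only when $m<n$, and $\iota^*\zeta_{f\oplus g}(TU)$ only on the diagonal $n=m$ — this matches precisely the three-case definition of $c_{n,m}$ in Definition \ref{2-product}. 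So the proof reduces to three arc-space identities, one per region.

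First I would set up the arc-theoretic dictionary. For the diagonal case $n=m$, I would compute the coefficient of $(TU)^n$ in $\iota^*\zeta_{f\oplus g}(TU)$, i.e. $\iota^*[\mathscr X_n(f\oplus g)]\Lbb^{-n\cdot\dim(X\times Y)}$, by stratifying the arc space $\mathscr X_n(f\oplus g)\subset\mathscr L_n(X\times Y)$ according to $(\ord f(\varphi_X),\ord g(\varphi_Y))$: either both orders equal $n$ with leading coefficients summing to zero (a Fermat-$F_0^n$-type locus, giving the $a_n\ast_0 b_n$ contribution), or both orders equal $n$ with leading coefficients summing to something nonzero (an $F_1^n$-type locus, which together with the previous case assembles into $-a_n\ast b_n$ after accounting for signs), or exactly one order is $<n$ and the other $\geq n$ (giving the $\sum_{l\le n}\Lbb^{l-n}a_l\ast_0 b_l$ and the $(\Lbb-1)\sum_{l>n}(a_n\times b'_l+a'_l\times b_n)$ terms, via the $(\Lbb-1)$-torsor structure of truncation and the augmentation map $z\mapsto z'$). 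This is exactly the Denef–Loeser/Looijenga convolution computation underlying the motivic Thom–Sebastiani theorem, refined to keep track of which of $\ord f,\ord g$ is the smaller; I expect to lean on the description of $\ast$, $\ast_0$ and $\times$ from Subsection \ref{convolution} here.

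For the off-diagonal cases, say $n<m$, I would show that the coefficient of $T^nU^m$ in $\zeta_{f,g}(T,U)$ — namely $[\mathscr D_{(n,m)}(f,g)]\Lbb^{-(n+m)d}$ where $\mathscr D_{(n,m)}(f,g)$ is the locus $f(\varphi_X)=t^n\bmod t^{n+1}$, $\ord g(\varphi_Y)>m$ — equals $(\Lbb-1)\sum_{l>m}a_n\times b'_l$. The factor $[\mathscr X_n(f)]\Lbb^{-nd}=a_n$ comes off the $X$-coordinate. On the $Y$-side, $\{\varphi_Y\in\mathscr L_m(Y):\ord g(\varphi_Y)>m\}$ should be analysed by further truncating: arcs with $\ord g>m$ at level $m$ are, up to an $(\Lbb-1)$ correction coming from passing between the jet levels relevant to $b_l$ and the bound, built out of the classes $[\mathscr X_l(g)]$ for $l>m$ — and the switch from $b_l$ to $b'_l$ is exactly the loss of the $\hat\mu$-action that the augmentation map records, because once $g(\varphi_Y)$ is only required to vanish to order $>m$ rather than to have a prescribed angular component, the relevant root-of-unity action is trivialised. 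The case $n>m$ is symmetric with the roles of $f$ and $g$ (and $T$ and $U$) swapped. Throughout, I would use the fact, already invoked in the proof of Theorem \ref{commutativity}, that in $\MM_{X_0\times Y_0}^{\hat\mu}$ (where $\Lbb-1$ is invertible) one has $a_n\times b'_l=a'_l\times b_n$, so the three expressions in Definition \ref{2-product} are genuinely symmetric in the appropriate sense.

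The main obstacle I anticipate is the bookkeeping in the diagonal case: correctly separating the "leading coefficients sum to zero" and "leading coefficients sum to nonzero" sub-loci of $\mathscr X_n(f\oplus g)$, tracking the induced $\hat\mu$-actions through the Fermat quotients $F_i^n\times^{\mu_n\times\mu_n}(-)$, and verifying that the pieces where exactly one of $\ord f,\ord g$ drops below $n$ reassemble (after summing the geometric-series contributions from the truncation fibres $\mathbb A^{(m-n)d}$) into exactly the $\Lbb^{l-n}$ and $(\Lbb-1)$ coefficients prescribed. A clean way to organise this is to first prove the identity after pulling back to a common log-resolution of $f$, $g$ and $f\oplus g$, using the explicit formula \eqref{DL-form}: the left-hand side's coefficients become sums over pairs $(I,J)$ of index-sets, the convolution $\ast_0$ turns the Fermat varieties into the standard $\widetilde E^\circ_{I}$-type classes of $f\oplus g$, and one checks the combinatorial identity among the rational functions $\prod\frac{\Lbb^{-\nu}T^N}{1-\Lbb^{-\nu}T^N}$ region by region. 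Once the resolution-side identity holds, descending back is automatic since both sides are defined independently of the resolution. This reduces the whole theorem to a finite combinatorial lemma about lattice points, at the cost of a somewhat heavy setup — but it avoids any delicate arc-space surgery.
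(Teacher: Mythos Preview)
Your overall architecture --- match the coefficient of $T^nU^m$ in each of the three regions $n<m$, $n>m$, $n=m$ --- is exactly what the paper does, and your off-diagonal analysis is essentially correct and matches the paper's Lemma~\ref{lem32}. The trouble is in your diagonal stratification of $\iota^*\mathscr X_n(f\oplus g)$, which is misidentified in two places.

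First, the stratum ``both orders equal $n$ and the leading coefficients sum to zero'' does \emph{not} contribute to $\mathscr X_n(f\oplus g)$ at all: if $\ord f(\varphi)=\ord g(\psi)=n$ and the leading coefficients cancel, then $\ord(f(\varphi)+g(\psi))>n$, so $(\varphi,\psi)\notin\mathscr X_n(f\oplus g)$. The appearance of $a_n\ast_0 b_n$ in the formula is purely an algebraic rewriting of $a_n\ast_1 b_n=-a_n\ast b_n+a_n\ast_0 b_n$. Second, the stratum ``exactly one order is $<n$ and the other $\geq n$'' also does not contribute, since then $\ord(f+g)<n$. The correct trichotomy, as in the paper, is: (i)~both orders equal $n$ (this is $A_1^{(n)}$, an $F_1^n$-locus giving $a_n\ast_1 b_n$); (ii)~one order equals $n$ and the other is $>n$ (this is $A_2^{(n)}$, giving the $(\Lbb-1)\sum_{l>n}(\cdots)$ term); (iii)~both orders equal some $l<n$, with leading coefficients cancelling (this is $A_3^{(n)}$, giving $\sum_{l<n}\Lbb^{l-n}a_l\ast_0 b_l$).

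The third stratum $A_3^{(n)}$ is the one you have not accounted for, and it is the hardest: the paper's Lemma~\ref{lem33} handles it via a reparametrisation trick, sending $(\varphi,\psi,\tau)\mapsto(\varphi\circ\tau,\psi\circ\tau)$ for $\tau(t)=t+\alpha_2t^2+\cdots$, which trades the condition $\ord(f(\varphi)+g(\psi))\geq n$ (with $\ord f=\ord g=l$) for an $\mathbb A_k^{n-l-1}$-fibration over a product locus. Your sketch (``$(\Lbb-1)$-torsor structure of truncation'') does not capture this, and without it you will not produce the factor $\Lbb^{l-n}$ correctly.

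Finally, the log-resolution alternative you propose at the end would run into a genuine obstruction: a log-resolution of $\{f\oplus g=0\}\subset X\times Y$ is \emph{not} obtained from the product of resolutions of $\{f=0\}$ and $\{g=0\}$ (the hypersurface $f(x)+g(y)=0$ is not a normal-crossings divisor on the product of the resolutions), so the combinatorial identity you hope for between the $\widetilde E_I^\circ$-classes is not available. The paper stays entirely on the arc-space side and avoids resolutions for this theorem.
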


\begin{proof}
Let $d_1$ and $d_2$ be the pure $k$-dimensions of $X$ and $Y$, respectively, and let $d:=d_1+d_2$. For brevity of notation, we write $a_n$ for $[\mathscr X_n(f)]\mathbb L^{-nd_1}$ in $\mathscr M_{X_0}^{\hat\mu}$ and $b_n$ for $[\mathscr X_n(g)]\mathbb L^{-nd_2}$ in $\mathscr M_{Y_0}^{\hat\mu}$, we also ignore writing arrows to base for relative objects when they are clearly understood, e.g., let $[\mathscr X_n(f)]$ simply stand for $[\mathscr X_n(f)\to X_0]$. The motivic zeta functions of $f$ and $g$ can be rewritten as follows
$$\zeta_{f}(T)=\sum_{n\geq 1}a_nT^n \in \mathscr M_{X_0}^{\hat\mu}[[T]] \quad\text{and}\quad \zeta_{g}(U)=\sum_{n\geq 1}b_nU^n\in \mathscr M_{Y_0}^{\hat\mu}[[U]].$$

Let us consider the coefficients of the series $\iota^*\zeta_{f\oplus g}(TU)$. For $n\in\mathbb N_{>0}$, we have 
\begin{align*}
[\iota^*\mathscr X_n(f\oplus g)]=\left[\left\{(\varphi,\psi)\in \mathscr L_n(X\times Y)\mid f(\varphi)+ g(\psi)= t^n\mod t^{n+1}\right\}\right]
\end{align*}
that equals the sum $A_1^{(n)}+A_2^{(n)}+A_3^{(n)}$, where $A_1^{(n)}$, $A_2^{(n)}$ and $A_3^{(n)}$ are given by the expressions
\begin{align*}
A_1^{(n)}&=\left[\left\{(\varphi,\psi)\in \mathscr L_n(X\times Y)\ \begin{array}{|l} f(\varphi)+ g(\psi)= t^n\mod t^{n+1}\\ \ord f(\varphi)=\ord g(\psi)=n \end{array}\right\}\right],\\
A_2^{(n)}&=\left[\left\{(\varphi,\psi)\in \mathscr L_n(X\times Y)\ \begin{array}{|l} f(\varphi)+ g(\psi)= t^n\mod t^{n+1}\\ \ord f(\varphi)\neq \ord g(\psi)\end{array}\right\}\right],\\
A_3^{(n)}&=\sum_{1\leq l<n} \left[\left\{(\varphi,\psi)\in \mathscr L_n(X\times Y)\ \begin{array}{|l} f(\varphi)+ g(\psi)= t^n\mod t^{n+1}\\ \ord f(\varphi)= \ord g(\psi)=l\end{array}\right\}\right].
\end{align*}
It is useful for the rest of the proof to introduce another notation, $B_n$, so that
\begin{align*}
B_n=(\Lbb-1)\left[\left\{(\varphi,\psi)\in \mathscr L_n(X\times Y)\ \begin{array}{|l} f(\varphi)= t^n\mod t^{n+1}\\ g(\psi)= -t^n\mod t^{n+1}\end{array}\right\}\right].
\end{align*}

\begin{lemma}\label{lem31}
The identities $a_n\ast_1 b_n=A_1^{(n)}\mathbb L^{-nd}$ and $a_n\ast_0 b_n=B_n\mathbb L^{-nd}$ hold in $\mathscr M_{X_0\times Y_0}^{\hat\mu}$.
\end{lemma}

\begin{proof}[Proof of Lemma \ref{lem31}]
We shall prove the first identity, that $a_n\ast_1 b_n=A_1^{(n)}\mathbb L^{-nd}$, proving the second one can be done in the same way. The mapping from the $k$-variety $\mathscr X_n(f)\times \mathscr X_n(g)\times F_1^n$ toward the $k$-variety
$$E:=\left\{(\varphi,\psi)\in \mathscr L_n(X\times Y)\
\begin{array}{|l}
\ord f(\varphi)=\ord g(\psi)=n\\
f(\varphi)+g(\psi)= t^n\mod t^{n+1}
\end{array}
\right\}
$$
that sends $(\varphi(t),\psi(t); \xi,\eta)$ to $(\varphi(\xi t),\psi(\eta t))$ gives rise to a morphism $\theta$ of $(X_0\times Y_0)$-varieties 
$$\mathscr X_n(f)\times \mathscr X_n(g)\times^{\mu_n\times \mu_n} F_1^n\to E.$$ 
It is clear that the source and the target of $\theta$ are endowed with the natural action of $\mu_n$, and that $\theta$ is a $\mu_n$-equivariant isomorphism. The desired identity $a_n\ast_1 b_n=A_1^{(n)}\mathbb L^{-nd}$ is now proved. The reader may also find in the proof of Lemma 5.2 in \cite{Thuong2} to obtain more detailed arguments.
\end{proof}

\begin{lemma}\label{lem32}
The identity $(\mathbb L-1)\sum_{l>n}\left(a_n \times b'_l+a'_l \times b_n\right)=A_2^{(n)}\mathbb L^{-nd}$ holds in $\mathscr M_{X_0\times Y_0}^{\hat\mu}$.
\end{lemma}

%Proving the previous lemma is quite easy, possibly done by direct computation, so we leave this work to the reader.
\begin{proof}[Proof of Lemma \ref{lem32}]
Note that the condition $\ord f(\varphi)\neq \ord g(\psi)$ in the definition of $A_2^{(n)}$ may be presented as 
$$\left(\ord f(\varphi)< \ord g(\psi)\right) \vee \left(\ord f(\varphi) > \ord g(\psi)\right),$$
so we can write $A_2^{(n)}$ as follows
\begin{align*}
A_2^{(n)}&=\left[\left\{\varphi\in \mathscr L_n(X)\mid f(\varphi)= t^n\mod t^{n+1}\right\}\right]\times \left[\left\{\psi\in \mathscr L_n(Y)\mid \ord g(\psi)>n\right\}\right]\\
&\quad\quad  + \left[\left\{\varphi\in \mathscr L_n(X)\mid \ord f(\varphi)>n\right\}\right]\times \left[\left\{\psi\in \mathscr L_n(Y)\mid g(\psi)=t^n\mod t^{n+1}\right\}\right].
\end{align*}
Let us denote by $D$ the constructible subset $\left\{\varphi\in \mathscr L_n(X)\mid \ord f(\varphi)>n\right\}$ of $\mathscr L_n(X)$. Then $\mu(\pi_n^{-1}(D))=[D]\Lbb^{-d_1}$, with $\mu$ being the motivic measure. Putting
$$D_l:=\left\{\varphi\in \mathscr L(X)\mid \ord f(\varphi)=l\right\},$$
for any $l>n$, we get $\pi_n^{-1}(D)=\bigcup_{l>n}D_l$, and, by $\sigma$-additivity of $\mu$, we have 
$$[D]=\Lbb^{d_1}\mu(\pi_n^{-1}(D))=\Lbb^{d_1}\sum_{l>n}\mu(D_l)=\sum_{l>n}\left[\pi_l(D_l)\right]\Lbb^{-ld_1}.$$

Since $\left\{\varphi\in \mathscr L_l(X)\mid \ord f(\varphi)=l\right\}$ is isomorphic as an algebraic $X_0$-variety to the quotient of $\mathscr X_l(f)\times\mathbb G_{m,k}$ by the $\mu_l$-action given by $\xi\cdot (\varphi,\lambda):=(\xi\varphi,\xi^{-1}\lambda)$, we have $\left[\pi_l(D_l)\right]\Lbb^{-ld_1}=(\Lbb-1)a'_l$, thus we get 
$$[D]=(\Lbb-1)\sum_{l>n}a'_l,$$ 
and in the same way, $\left[\left\{\psi\in \mathscr L_n(Y)\mid \ord g(\psi)>n\right\}\right]$ is equal to $(\Lbb-1)\sum_{l>n}b'_l$. The lemma is then proved. 
\end{proof}

\begin{lemma}\label{lem33}
The equality $\sum_{l<n}a_l\ast_0 b_l\mathbb L^{l-n}=A_3^{(n)}\mathbb L^{-nd}$ holds in $\MM_{X_0\times Y_0}^{\hat\mu}$.
\end{lemma}

\begin{proof}[Proof of Lemma \ref{lem33}]
For any $l<n$, let us consider the $k$-varieties 
\begin{align*}
U_l&=\left\{(\varphi,\psi)\in \mathscr L_n(X\times Y)\ \begin{array}{|l} f(\varphi)+ g(\psi)= t^n\mod t^{n+1}\\ \ord f(\varphi)= \ord g(\psi)=l\end{array}\right\},\\
\widetilde U_l&=\left\{(\varphi,\psi)\in \mathscr L_n(X\times Y)\ \begin{array}{|l} f(\varphi)+ g(\psi)= t^n\mod t^{n+1}\\ f(\varphi)=t^l\mod t^{l+1}\\ g(\psi)=-t^l\mod t^{l+1}\end{array}\right\},
\end{align*}
and
\begin{align*}
W_l&=\left\{(\varphi,\psi)\in \mathscr L_n(X\times Y)\ \begin{array}{|l} \ord\left(f(\varphi)+g(\psi)\right)=n\\ f(\varphi)=t^l\mod t^{l+1}\\ g(\psi)=-t^l\mod t^{l+1}\end{array}\right\},
\end{align*}
which admit evidently the natural action of $\mu_l$. Here, the class of $U_l$ is nothing else than the $l$-th term of the sum $A_3^{(n)}$. Again as in the proof of Lemma \ref{lem32}, since $U_l$ is isomorphic as a $(X_0\times Y_0)$-variety to the quotient of $\widetilde U_l\times\mathbb G_{m,k}$ by the $\mu_l$-action given by $\xi\cdot (\varphi,\psi,\lambda):=(\xi\varphi,\xi\psi,\xi^{-1}\lambda)$, we get $[U_l]=(\Lbb-1)[\widetilde U_l]'$ in $\mathscr M_{X_0\times Y_0}^{\hat\mu}$. Similarly, $[W_l]=(\Lbb-1)[\widetilde U_l]'$ in $\mathscr M_{X_0\times Y_0}^{\hat\mu}$. Hence $[U_l]=[W_l]$ in $\mathscr M_{X_0\times Y_0}^{\hat\mu}$. 

Now we write $[W_l]=[W_l^{\geq n}]-[W_l^{\geq n+1}]\Lbb^{-d}$, where 
$$W_l^{\geq n}=\left\{(\varphi,\psi)\in \mathscr L_n(X\times Y)\ \begin{array}{|l} \ord\left(f(\varphi)+g(\psi)\right)\geq n\\ f(\varphi)=t^l\mod t^{l+1}\\ g(\psi)=-t^l\mod t^{l+1}\end{array}\right\}.$$
Put 
\begin{align*}
E_{n,l}&=\left\{(\varphi,\psi)\in \mathscr L_{n}(X\times Y)\ \begin{array}{|l}  f(\varphi)=t^l\mod t^{l+1}\\ g(\psi)=-t^l\mod t^{l+1} \end{array}\right\}\\
A&=\left\{\tau\in \mathscr L_{n-l}(\mathbb{A}_k^1)\mid \tau=t+\alpha_2t^2+\cdots+\alpha_{n-l}t^{n-l}\right\}.
\end{align*}
The $(X_0\times Y_0)$-morphism $W_l^{\geq n}\times A \to E_{n,l}$ sending $(\varphi,\psi,\tau)$ to $(\varphi\circ\tau,\psi\circ\tau)$ is an isomorphism, from which $[W^{\geq n}_l]=[E_{n,l}]\Lbb^{l+1-n}$. Since $[E_{n,l}]=B_l(\Lbb-1)^{-1}\Lbb^{(n-l)d}$, it follows from Lemma \ref{lem31} that $[E_{n,l}]=a_l\ast_0 b_l(\Lbb-1)^{-1}\Lbb^{nd}$, therefore 
$$[W^{\geq n}_l]=a_l\ast_0 b_l(\Lbb-1)^{-1}\Lbb^{nd+l+1-n}.$$ 
Consequently, 
$$[W_l]=[W_l^{\geq n}]-[W_l^{\geq n+1}]\Lbb^{-d}=a_l\ast_0 b_l\Lbb^{nd+l-n}.$$
Then we get $A_3^{(n)}\mathbb L^{-nd}=\sum_{l<n}[W_l]\mathbb L^{-nd}=\sum_{l<n}a_l\ast_0 b_l\Lbb^{l-n}$ as desired.
\end{proof}

Let us continue the proof of Theorem \ref{thm21}. Using Lemmas \ref{lem31}, \ref{lem32} and \ref{lem33} gives the coefficient of $T^nU^n$ in $\iota^*\zeta_{f\oplus g}(TU)$, also the coefficient of $T^nU^n$ in the right hand side of the Euler reflexion formula, as follows
\begin{align*}
[\iota^*\mathscr X_n(f\oplus g)]\Lbb^{-nd}&=a_n\ast_1 b_n+ \sum_{l<n} \mathbb L^{l-n}a_l\ast_0 b_l+ (\mathbb L-1)\sum_{l>n}\left(a_n \times b'_l+a'_l \times b_n\right)\\
&=-a_n\ast b_n+ \sum_{l\leq n}\mathbb L^{l-n}a_l\ast_0 b_l+ (\mathbb L-1)\sum_{l>n}\left(a_n \times b'_l+a'_l \times b_n\right).
\end{align*}
This quantity agrees with the coefficient of $T^nU^n$ in the left hand side, according to the $\boxast$-product of the motivic zeta functions $\zeta_{f}(T)$ and $\zeta_{g}(U)$ (see Section \ref{keyproduct}).  

On the other hand, for $n<m$, the coefficient of $T^nU^m$ in the right hand side of the Euler reflexion formula is nothing else than $\left[\mathscr{D}_{n,m}(f,g)\right]\Lbb^{-(n+m)d}$, which equals
$$[\mathscr X_n(f)]\mathbb L^{-nd_1}\times \sum_{l>m}\left[\left\{(\psi\in\mathscr{L}_l(Y)\mid \ord g(\psi)=l\right\}\right]\mathbb L^{-ld_2}=(\Lbb-1)\sum_{l>m}a_n\times b'_l,$$
definitely coinciding the coefficient of $T^nU^m$ in the left hand side of the Euler reflexion formula. For the detail in proving these identities, see the proof of Lemma \ref{lem32}. The previous arguments obviously run for the case $n>m$, and Theorem \ref{thm21} is now proved.
\end{proof}

%*************************
\subsection{Motivic multiple nearby cycles and the motivic Thom-Sebastiani theorem}
Let $X$, $Y$, $f$ and $g$ be as in Theorem \ref{thm21}. Let us now compute the motivic multiple zeta functions $\mathscr S_{f,g}$ and $\mathscr S_{g,f}$, which are the limit of the series $-\zeta_{f,g}(T,T)$ and $-\zeta_{g,f}(T,T)$, respectively. Afterward, together with the commuting of $\boxast$-product and $\lim_{T\to \infty}$, and the motivic Euler reflexion formula, we deduce the motivic Thom-Sebastiani theorem.

\begin{proposition}\label{lm23}
The identities $\mathscr S_{f,g}=-\mathscr S_f\times [Y_0]$ and $\mathscr S_{g,f}=-[X_0]\times\mathscr S_g$ hold in $\mathscr M_{X_0\times Y_0}^{\hat\mu}$.
\end{proposition}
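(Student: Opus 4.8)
The plan is to compute $\mathscr S_{f,g} = -\lim_{T\to\infty}\zeta_{f,g}(T,T)$ directly from the definition of $\zeta_{f,g}(T,U)$ and the structure of the classes $[\mathscr D_{\n}(f,g)]$. Recall that for $\n = (n_1,n_2)$ with $1 \le n_1 < n_2$,
\[
\mathscr D_{n_1,n_2}(f,g) = \left\{(\varphi,\psi)\in \mathscr L_{n_1+n_2}(X\times Y) \ \begin{array}{|l} f(\varphi) = t^{n_1} \bmod t^{n_1+1}\\ \ord g(\psi) > n_2\end{array}\right\},
\]
which splits as a product: the $\varphi$-part is $\mathscr X_{n_1}(f)$ truncated at level $n_1+n_2$ (a trivial affine fibration over $\mathscr X_{n_1}(f)$ with fiber $\mathbb A_k^{n_2 d_1}$), and the $\psi$-part is $\{\psi \in \mathscr L_{n_1+n_2}(Y) \mid \ord g(\psi) > n_2\}$, which has class $\Lbb^{(n_1+n_2)d_2}\mu(\pi^{-1}(\cdot)) = \Lbb^{(n_1+n_2)d_2} - \Lbb^{(n_1+n_2)d_2}\sum_{l\le n_2}\mu(D_l)$ where the correction involves exactly the classes $b'_l = [\{\psi : \ord g(\psi) = l\}]\Lbb^{-ld_2}/(\Lbb-1)$ appearing in Lemma \ref{lem32}. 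Hence $[\mathscr D_{n_1,n_2}(f,g)]\Lbb^{-(n_1+n_2)d} = (\Lbb-1) a_{n_1}\times \sum_{l>n_2} b'_l$, so
\[
\zeta_{f,g}(T,U) = (\Lbb-1)\sum_{1\le n<m}\Big(a_n \times \sum_{l>m}b'_l\Big)T^nU^m.
\]

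Next I would set $T=U$ and take the limit. Putting $T=U$ gives $(\Lbb-1)\sum_{1\le n<m}(a_n\times\sum_{l>m}b'_l)T^{n+m}$. The key observation is the augmentation identity, valid in $\MM_{X_0\times Y_0}^{\hat\mu}$ because $\Lbb-1$ is invertible there, that $(\Lbb-1)(a_n\times b'_l)$ is the image of the class in the trivial-$\hat\mu$-component, so after applying the augmentation appropriately the limit $\lim_{T\to\infty}$ of this series can be evaluated by the same bookkeeping used in the proof of Theorem \ref{commutativity} for the terms $B_1$, $B_2$: one rewrites $\sum_{l>m}b'_l = b'(1) - \sum_{l\le m}b'_l$ where $b'(1) = \sum_{l\ge1}b'_l$ converges by finite mass, splits the double sum into a geometric-type piece and a ``diagonal'' piece, and uses that $\lim_{T\to\infty}\frac{T^k}{1-T^k} = -1$ together with $\lim_{T\to\infty}\sum_{n\ge0}\Lbb^{-n}T^{2n} = 0$. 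The surviving term should collapse to $b'(1)$ evaluated against $-\sum_n a_n = \mathscr S_f$ (up to sign), giving $\lim_{T\to\infty}\zeta_{f,g}(T,T) = \mathscr S_f \times [Y_0]$ after identifying $(\Lbb-1)b'(1) = [Y_0]$ — since $\sum_{l\ge1}(\Lbb-1)b'_l = \sum_{l\ge1}[\{\psi\in\mathscr L_l(Y):\ord g(\psi)=l\}]\Lbb^{-ld_2}$ sums (again via finite mass / $\sigma$-additivity of the motivic measure) to the class of all arcs on $Y$ with finite nonzero order, which on the zero locus contributes $[Y_0]$. Therefore $\mathscr S_{f,g} = -\mathscr S_f\times[Y_0]$, and the identity $\mathscr S_{g,f} = -[X_0]\times\mathscr S_g$ follows by the symmetric computation with the roles of $f$ and $g$ exchanged.

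The main obstacle I anticipate is the careful identification $(\Lbb-1)\sum_{l\ge1}b'_l = [Y_0]$ and, relatedly, making the interchange of limit and infinite sum rigorous: one must argue that the series $\zeta_{f,g}(T,T)$ genuinely lies in $\MM_{X_0\times Y_0}^{\hat\mu}[[T]]_{\sr}$ (or is handled term-by-term via the decomposition into integrable pieces as in \eqref{IntSeries}) before $\lim_{T\to\infty}$ is even defined, and that the rearrangement into the $B_1$/$B_2$-type shape is legitimate in the rational-series module. A secondary subtlety is the role of the augmentation map: the external product $a_n\times b'_l$ a priori lives in $\mathscr M^{\hat\mu^2}$, and one needs the fact (cited from \cite{GLM1}) that after multiplying by $\Lbb-1$ the two augmentations agree, so that the answer $\mathscr S_f\times[Y_0]$ with a single $\hat\mu$-action is well-posed. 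Once these points are in place the computation is essentially the specialization of the proof of Theorem \ref{commutativity} to the ``off-diagonal'' block, so I would organize the write-up to reuse that machinery rather than redo it.
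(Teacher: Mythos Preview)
Your overall strategy matches the paper's: you derive
\[
\zeta_{f,g}(T,U)=(\Lbb-1)\sum_{1\le n<m}\Big(a_n\times\sum_{l>m}b'_l\Big)T^nU^m,
\]
set $T=U$, split $\sum_{l>m}b'_l=\sum_{l\ge1}b'_l-\sum_{l\le m}b'_l$, and identify the first term with $\mathscr S_f\times[Y_0]$ via the equality $(\Lbb-1)\sum_{l\ge1}b'_l=[Y_0]$ (which indeed follows from the $n=0$ case of the computation in Lemma~\ref{lem32}). This part is correct and is exactly what the paper does.

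The gap is in your treatment of the remaining piece
\[
R:=(\Lbb-1)\sum_{1\le n<m}\Big(a_n\times\sum_{l\le m}b'_l\Big)T^{n+m},
\]
whose limit you need to vanish. Your proposal to ``reuse the $B_1/B_2$ machinery from Theorem~\ref{commutativity}'' does not go through as stated: in that proof the vanishing of $\lim(B_1+B_2)$ relied essentially on combining the two symmetric off-diagonal blocks via the combinatorial identity that pairs $\sum_{n\le m}$ with $\sum_{m\le n}$ (using $a_n\times b'_m=a'_n\times b_m$). Here you have only a single block with the strict constraint $n<m$, and no companion piece to absorb it, so the Hadamard rewriting of Theorem~\ref{commutativity} does not apply. (The term $\sum_{n\ge0}\Lbb^{-n}T^{2n}$ you mention is from the $A_2$ computation, not from $B_1,B_2$, and is irrelevant here.)

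The paper handles $R$ by a different device: it recognises $R$ as $(\Lbb-1)$ times a Guibert--Loeser--Merle cone zeta function $Z^{C,\ell}_{f,\id,g}(T)$ for the cone $C=\{(n,l,m):1\le n<m,\ 1\le l\le m\}$ with linear form $\ell(n,l,m)=n+m$. The key input is the result from \cite{GLM2} that $\lim_{T\to\infty}Z^{C,\ell}_{f,\id,g}(T)$ is independent of the choice of admissible $\ell$. Replacing $\ell$ by $\ell'(n,l,m)=m$ turns the sum into
\[
\sum_{m\ge1}\Big(\sum_{n<m}a_n\times\sum_{l\le m}b_l\Big)T^m,
\]
which is now a genuine Hadamard product of two univariate rational series; applying Looijenga's rule and the elementary fact $\lim_{T\to\infty}\tfrac{b(T)}{1-T}=0$ gives the vanishing. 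This change-of-linear-form trick is the missing idea in your sketch, and without it (or some equivalent direct manipulation that you have not supplied) the argument is incomplete.
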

\begin{proof}
It suffices to check for the first identity. As in the proof of Theorem \ref{thm21}, for brevity of notation, let $a_n$ and $b_n$ stand for $[\mathscr X_n(f)]\mathbb L^{-nd_1}$ and $[\mathscr X_n(g)]\mathbb L^{-nd_2}$, respectively. By definition, 
$$\mathscr S_{f,g}=-\lim_{T\to\infty}\zeta_{f,g}(T,T),$$ 
we get the following
\begin{align*}
-\mathscr S_{f,g}&=(\mathbb{L}-1)\lim_{T\to\infty}\sum_{1\leq n< m} a_n\times\sum_{l> m} b'_l T^{n+m}\\
&=(\mathbb{L}-1)\lim_{T\to\infty}\sum_{1\leq n< m} a_n\times \sum_{l\geq 1} b'_l T^{n+m}-(\mathbb{L}-1)\lim_{T\to\infty}\sum_{1\leq n< m} a_n\times \sum_{l\leq m} b'_l T^{n+m}\\
&=\lim_{T\to\infty}\sum_{1\leq n< m} a_n T^{n+m}\times [Y_0]-(\mathbb{L}-1)\lim_{T\to\infty}\sum_{1\leq n< m} a_n \times\sum_{l\leq m} b'_l T^{n+m}\\
&=\lim_{T\to\infty}\sum_{n\geq 1} a_n \frac{T^{2n+1}}{1-T}\times [Y_0]-(\mathbb{L}-1)\lim_{T\to\infty}\sum_{1\leq n<m}\sum_{l\leq m} a_n \times b'_l T^{n+m}\\
&=\lim_{T\to\infty}\frac{T\zeta_{f}(T^2)}{1-T}\times [Y_0]+(\mathbb{L}-1)\lim_{T\to\infty} Z^{C,\ell}_{f,\id,g}\\
&=\mathscr S_{f}\times [Y_0]+(\mathbb{L}-1)\mathscr S^{C,\ell}_{f,\id,g},
\end{align*}
where $C$ is the rational polyhedral convex cone 
$$\left\{(n,l,m)\in \mathbb{N}^3\mid 1\leq n<m,1\leq l\leq m\right\},$$ 
$\ell(n,m,l)=n+m$, for $(n,m,l)\in\mathbb R^3$, and $\id$ is the identity morphism on $\mathbb{A}_k^1$. According to \cite[Section 2.9]{GLM2}, in fact, $\mathscr S^{C,\ell}_{f,\id,g}$ is independent of the choice of $\ell$ provided $\ell$ is linear on $\mathbb R^3$ and positive on the closure of $C$ in $\mathbb R^3$ outside the origin. By this, we may replace $\ell$ by $\ell'$ defined by $\ell'(n,m,l)=m$ to get $Z^{C,\ell'}_{f,id,g}(T)$ so that $Z^{C,\ell'}_{f,id,g}(T)$ has the same limit $\lim_{T\to\infty}$ as $Z^{C,\ell}_{f,id,g}(T)$. More precisely,
$$
-\mathscr S^{C,\ell}_{f,\id,g}=-\mathscr S^{C,\ell'}_{f,\id,g}=\lim_{T\to\infty} Z^{C,\ell'}_{f,\id,g}=\lim_{T\to\infty}\sum_{1\leq n<m}\sum_{1\leq l\leq m} a_n \times b_l T^{m}.
$$
By applying Lemma 7.6 of \cite{Loo} to the external product $\times$, which was already recalled in Section \ref{convolution}, together with the previous identity, we obtain a formula for $-\mathscr S^{C,\ell}_{f,\id,g}$ as follows
\begin{equation*}
\begin{aligned}
-\mathscr S^{C,\ell}_{f,\id,g}&=\lim_{T\to\infty}\sum_{m\geq 1}\Big(\sum_{n< m} a_n \times \sum_{l\leq m} b_l\Big)T^m\\
&=-\lim_{T\to\infty}\sum_{m\geq 1} \Big(\sum_{n< m } a_n\Big)T^{m} \times \lim_{T\to\infty}\sum_{m\geq 1} \Big(\sum_{l\leq m} b_l\Big) T^{m}\\
&=-\lim_{T\to\infty}\sum_{m\geq 1} a_m T^{m+1} \times \lim_{T\to\infty}\sum_{m\geq 1} b_m T^m\\
&=\lim_{T\to\infty}\sum_{m\geq 1} a_m \frac{T^{m+1}}{1-T}\times\lim_{T\to\infty}\sum_{m\geq 1} b_m \frac{T^{m}}{1-T},
\end{aligned}
\end{equation*}
which vanishes because of the vanishing of the second factor of the last expression, completing the proof of Proposition \ref{lm23}.
\end{proof}

\begin{theorem}[Motivic Thom-Sebastiani theorem]\label{coro21}
Using the assumption as in Theorem \ref{thm21}, the following identity
$$\iota^*\mathscr S_{f\oplus g}=-\mathscr S_{f}\ast \mathscr S_{g}+\mathscr S_f\times [Y_0]+[X_0]\times\mathscr S_g$$
holds in $\MM_{X_0\times Y_0}^{\hat\mu}$.
\end{theorem}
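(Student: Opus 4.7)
The plan is to specialize the motivic Euler reflexion formula (Theorem~\ref{thm21}) by setting $U=T$ and then apply the operator $-\lim_{T\to\infty}$ to both sides of the resulting identity in $\MM_{X_0\times Y_0}^{\hat\mu}[[T]]$. The hope is that each summand on the right hand side will collapse into one of the three quantities appearing in the statement, while the left hand side will be handled by the commutation property of $\boxast$ with $\lim_{T=U\to\infty}$ established in Theorem~\ref{commutativity}. In short, all the conceptual work has already been done; this proof should be a short assembly.

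More concretely, on the left I would invoke Theorem~\ref{commutativity} to write
$$\lim_{T\to\infty}\bigl(\zeta_f(T)\boxast \zeta_g(T)\bigr)=\bigl(\lim_{T\to\infty}\zeta_f(T)\bigr)\ast\bigl(\lim_{T\to\infty}\zeta_g(T)\bigr)=(-\mathscr S_f)\ast(-\mathscr S_g)=\mathscr S_f\ast\mathscr S_g,$$
so $-\lim_{T\to\infty}$ of the left hand side contributes $-\mathscr S_f\ast\mathscr S_g$. On the right, after setting $U=T$ the first two summands yield $\mathscr S_{f,g}$ and $\mathscr S_{g,f}$ by definition, and Proposition~\ref{lm23} identifies these with $-\mathscr S_f\times[Y_0]$ and $-[X_0]\times\mathscr S_g$, respectively. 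The remaining term $-\lim_{T\to\infty}\iota^*\zeta_{f\oplus g}(T^2)$ should equal $\iota^*\mathscr S_{f\oplus g}$, since the substitution $T\mapsto T^2$ leaves each basic rational factor $\Lbb^{m}T^{n}/(1-\Lbb^{m}T^{n})$ sent to $-1$ by the limit operator, and $\iota^*$ commutes with $\lim_{T\to\infty}$ by $\mathscr M^{\hat\mu}$-linearity.

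Equating the two sides and solving for $\iota^*\mathscr S_{f\oplus g}$ yields
$$\iota^*\mathscr S_{f\oplus g}=-\mathscr S_f\ast\mathscr S_g+\mathscr S_f\times[Y_0]+[X_0]\times\mathscr S_g,$$
which is exactly the identity claimed. The only delicate bookkeeping is justifying that the substitution $T\mapsto T^2$ inside $\iota^*\zeta_{f\oplus g}(TU)$ is harmless under the limit — a routine check at the level of the generating rational series $\Lbb^m T^n/(1-\Lbb^m T^n)$ — and that $\iota^*$ commutes through $\lim_{T\to\infty}$. I do not foresee any serious obstacle beyond this, since the substantive work has been packaged into Theorems~\ref{thm21} and~\ref{commutativity} and Proposition~\ref{lm23}.
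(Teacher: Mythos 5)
Your proposal is correct and is precisely the argument the paper has in mind: the paper's proof of this theorem is literally the one-line remark that it is a direct consequence of Theorems \ref{thm21} and \ref{commutativity} together with Proposition \ref{lm23}, and you have simply spelled out the assembly (set $U=T$, apply $-\lim_{T\to\infty}$, use the commutation of $\boxast$ with the limit on the left, and Proposition \ref{lm23} plus the harmlessness of $T\mapsto T^2$ under the limit on the right). The two bookkeeping points you flag — linearity of $\iota^*$ through the limit, and invariance of the limit under $T\mapsto T^2$ on the generating rational factors — are indeed the only things to check, and both hold.
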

\begin{proof}
This is a direct consequence of Theorems \ref{thm21}, \ref{commutativity} and Proposition \ref{lm23}.
\end{proof}

%*********************************

\section{Generalization of $\boxast$-product and motivic Euler reflexion formula}
\subsection{Integrable series}
First of all, let us recall some basic results on integrability of formal series. We define 
$$\mathbb Z[\Lbb]_{\loc}:=\mathbb Z[\Lbb, \Lbb^{-1}, (1-\Lbb^n)^{-1}, n\geq 1].$$ 
Let $\mathscr M$ and $\mathscr N$ be $\mathbb Z[\Lbb]_{\loc}$-modules, and let $\mathscr M\otimes \mathscr N$ denote $\mathscr M\otimes_{\mathbb Z[\Lbb]_{\loc}} \mathscr N$ for short.

\begin{lemma}\label{lm51}
If $a(\T)\in \mathscr M[[\T]]_{\inte}$ and $b(\T)\in \mathscr N[[\T]]_{\ssr}$, then $a(\T)\otimes_{\H} b(\T)\in \mathscr M\otimes \mathscr N[[\T]]_{\inte}$.
\end{lemma}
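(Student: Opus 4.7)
The plan is to reduce to atomic generators by bilinearity of $\otimes_{\H}$, expand via geometric series, and use a cone-decomposition argument to identify the Hadamard product as an integrable rational series.

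First, by $\mathbb Z[\Lbb]_{\loc}$-bilinearity of $\otimes_{\H}$, it suffices to treat the case
$$a(\T) = u\,\T^{\alpha}\prod_{i=1}^{I}(1-\Lbb^{m_i}\T^{\n_i})^{-1},\qquad b(\T) = v\,\T^{\beta}\prod_{j=1}^{J}(1-\Lbb^{p_j}\T^{\q_j})^{-1},$$
with $u\in\mathscr M$, $v\in\mathscr N$, all $m_i<0$, and all $p_j\leq 0$. Expanding each factor as a geometric series yields
$$a(\T)\otimes_{\H} b(\T) = (u\otimes v)\sum_{(\k,\l)\in S}\Lbb^{\m\cdot\k+\p\cdot\l}\,\T^{\alpha+N\k},$$
where $S\subset\mathbb N^{I+J}$ is the set of $(\k,\l)$ satisfying $\alpha+N\k=\beta+Q\l$, with $N$ the integer matrix of columns $\n_i$ and $Q$ that of columns $\q_j$.

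Next, by Gordan's lemma the set $S$ is (if nonempty) a finite union of translates $\mathbf s_\nu+C$, where $C:=\ker(N,-Q)\cap\mathbb N^{I+J}$ is a finitely generated submonoid. A unimodular refinement of the rational polyhedral cone $\mathbb R_{\geq 0}\cdot C$ into simplicial subcones then expresses the sum as a finite $(\mathscr M\otimes\mathscr N)$-linear combination of products
$$\T^{\gamma}\prod_{k}\bigl(1-\Lbb^{e_k}\T^{\mathbf f_k}\bigr)^{-1},$$
in which each pair $(e_k,\mathbf f_k)=(\m\cdot\x_k+\p\cdot\y_k,\,N\x_k)$ is determined by a primitive edge generator $(\x_k,\y_k)$ of such a simplicial subcone. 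This rationality step is precisely the same one used to obtain the formula (\ref{DL-form-version2}) for $Z_{\f}^C(\T)$; compare also \cite[Lemma~3.4]{DL2} and \cite[Section~2.9]{GLM2}.

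Finally, I would verify the integrability conditions $e_k<0$ and $\mathbf f_k\neq 0$. From $N\x_k=Q\y_k$, the non-negativity and nonvanishing of every $\n_i$ and $\q_j$ force $\x_k\neq 0$: if $\x_k=0$ then $Q\y_k=0$ forces $\y_k=0$, contradicting that $(\x_k,\y_k)$ is a nonzero generator. Hence some $x_{k,i}>0$, and together with $m_i<0$ this gives $\m\cdot\x_k<0$; since $\p\cdot\y_k\leq 0$ we conclude $e_k<0$. Similarly $\mathbf f_k=N\x_k$ is a nonzero element of $\mathbb N^r$. Each factor is therefore an integrable generator, so $a(\T)\otimes_{\H}b(\T)\in\mathscr M\otimes\mathscr N[[\T]]_{\inte}$. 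The main technical obstacle I expect is the bookkeeping to pass from the Diophantine-solution set $S$ to a clean rational-function form via the cone decomposition; once that is handled, the sign arithmetic on the exponents is immediate.
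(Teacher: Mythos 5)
Your proof is correct, and it takes a genuinely different route from the paper's. The paper's proof is essentially a citation: it points to Looijenga's Lemma~7.6 (which treats the univariate rational case) and asserts that his argument carries over to the multivariate, graded (integrable/strongly-rational) setting, while briefly mentioning that ``combinatorics'' or Cluckers--Loeser constructible-function theory would give more direct alternatives. You have carried out precisely that combinatorial alternative: reduce to atomic generators by bilinearity, expand geometric series, describe the matching-exponent locus $S$ as the $\mathbb N$-solution set of a linear Diophantine system, decompose $S$ into translates of the homogeneous solution monoid $C$, and use a unimodular simplicial subdivision of $\mathbb R_{\geq 0}\cdot C$ to land in the integrable generators, with the sign/vanishing checks $e_k<0$, $\mathbf f_k\neq 0$ supplying exactly the conclusion. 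This is the same mechanism that underlies the rationality formula~(2.2) in the paper and \cite[Lemma~3.4]{DL2}, so the approach is consistent with the authors' toolkit; what you gain is a self-contained, checkable argument instead of an appeal to an offsite lemma plus a ``the arguments still work'' claim, at the cost of the bookkeeping you already flag (in particular, the translates $\mathbf s_\nu+C$ overlap, so one needs an inclusion--exclusion over further translates of $C$, and the unimodular subdivision must be taken relative to the lattice $\ker(N,-Q)\cap\mathbb Z^{I+J}$ with half-open cones to avoid double-counting; also the finite-union-of-translates structure of $S$ is better ascribed to Dickson's lemma than to Gordan's). None of these affect the validity of the argument.
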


\begin{proof}
Looijenga gave a similar statement for the univariate case in \cite[Lemma 7.6]{Loo}, which claims that the Hadamard product corresponding to tensor product on coefficients of two rational series is again rational. His arguments in fact still work in our situation. Moreover, there are methods more direct to prove this lemma, such as combinatorics or Cluckers-Loeser's computations for the constructible motivic functions in \cite[Section 4]{CL} together with the version with action in \cite{Thuong3}.
\end{proof}

\begin{lemma}\label{lm511}
Let $\mathscr M$ be $\mathbb Z[\Lbb]_{\loc}$-modules, and $\T$ and $\U$ separated multivariates. Then
$$\mathscr M[[\T]]_{\inte}[[\U]]_{\inte}\subset \mathscr M[[\T,\U]]_{\inte}\subset \mathscr M[[\T]]_{\inte}[[\U]],$$
where $\mathscr M[[\T]_{\inte}[[\U]]$ is the set of formal series in $\U$ over $\mathscr M[[\T]]_{\inte}$.
\end{lemma}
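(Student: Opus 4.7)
The plan is to prove both inclusions by unpacking the defining generators of each module and performing elementary rearrangements; no auxiliary machinery should be required.

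For the first inclusion $\mathscr M[[\T]]_{\inte}[[\U]]_{\inte}\subset \mathscr M[[\T,\U]]_{\inte}$, I would observe that, by definition, the left-hand side is generated as a $\mathbb Z[\Lbb]_{\loc}$-module (really, as a $\mathscr M$-module) by elements of the form
\[
R(\T,\U)\cdot\prod_{i}\bigl(1-\Lbb^{m_i}\T^{\n_i}\bigr)^{-1}\cdot\prod_{j}\bigl(1-\Lbb^{m'_j}\U^{\p_j}\bigr)^{-1},
\]
with $R\in\mathscr M[\T,\U]$, $m_i,m'_j\in\mathbb Z_{<0}$, $\n_i\in\mathbb N^r\setminus\{0\}$, $\p_j\in\mathbb N^s\setminus\{0\}$. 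Writing $\n_i$ as $(\n_i,\mathbf 0)\in\mathbb N^{r+s}\setminus\{0\}$ and $\p_j$ as $(\mathbf 0,\p_j)\in\mathbb N^{r+s}\setminus\{0\}$, every such generator is by construction an integrable series in $\mathscr M[[\T,\U]]_{\inte}$. This step is essentially a bookkeeping unpacking of the definitions.

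For the second inclusion $\mathscr M[[\T,\U]]_{\inte}\subset \mathscr M[[\T]]_{\inte}[[\U]]$, I would again consider a typical generator, this time of the form
\[
P(\T,\U)\cdot\prod_{i}\bigl(1-\Lbb^{m_i}\T^{\n_i}\bigr)^{-1}\cdot\prod_{j}\bigl(1-\Lbb^{m'_j}\T^{\q_j}\U^{\p_j}\bigr)^{-1},
\]
with $P\in\mathscr M[\T,\U]$, $m_i,m'_j\in\mathbb Z_{<0}$, $\n_i\in\mathbb N^r\setminus\{0\}$, $(\q_j,\p_j)\in\mathbb N^{r+s}\setminus\{0\}$, and (by separating the two kinds of factors) I can assume the $\p_j$ are all nonzero. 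Expanding each mixed factor via the geometric series
\[
\bigl(1-\Lbb^{m'_j}\T^{\q_j}\U^{\p_j}\bigr)^{-1}=\sum_{l_j\geq 0}\Lbb^{m'_j l_j}\T^{l_j\q_j}\U^{l_j\p_j},
\]
the coefficient of a fixed $\U^{\r}$ in the whole product is a \emph{finite} sum indexed by the tuples $(l_j)$ with $\sum_j l_j\p_j=\r-\r_0$ for some $\r_0$ coming from $P$. Finiteness follows from $\p_j\neq 0$, which forces each $l_j\leq|\r|$. Each such finite sum is a polynomial in $\T$ multiplied by $\prod_i(1-\Lbb^{m_i}\T^{\n_i})^{-1}$, hence it lies in $\mathscr M[[\T]]_{\inte}$.

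The only step that requires any attention is the finiteness observation in the second inclusion, and this is essentially combinatorial: a sum of nonnegative multiples of nonzero vectors in $\mathbb N^s$ that equals a fixed vector has boundedly many solutions. Once that is recorded, both inclusions follow directly. Thus I do not anticipate a real obstacle; the lemma is a structural compatibility statement whose proof is really a careful expansion of the definitions, and the main value lies in formulating the statement cleanly for later use.
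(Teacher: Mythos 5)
The paper leaves this lemma to the reader (``Proof of Lemma~\ref{lm511} is elementary and left to the readers''), so there is no proof in the source to compare against; your argument supplies exactly the kind of elementary verification the authors had in mind. Both inclusions are handled correctly: the first by recognizing that pure-$\T$ and pure-$\U$ denominators are special cases of mixed ones after padding with zeros, and the second by expanding the mixed denominators geometrically in $\U$ and noting that each $\U$-coefficient is a finite sum because the $\p_j$ are nonzero, hence a $\T$-polynomial over $\mathbb Z[\Lbb]_{\loc}$ divided by the remaining pure-$\T$ factors, i.e.\ an element of $\mathscr M[[\T]]_{\inte}$. The only minor point worth tightening is the bound $l_j\le|\r|$: it follows because some component of $\p_j$ is $\ge 1$, so $l_j$ is dominated by the corresponding component of $\r-\r_0$, which is $\le|\r|$; and one should say explicitly that the $\U$-coefficient is a finite sum over the (finitely many) $\U$-monomials $\r_0$ appearing in $P$ as well as over the tuples $(l_j)$. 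With that noted, the proof is complete and correct.
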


Proof of Lemma \ref{lm511} is elementary and left to the readers.

For any pair of formal series with some variables mixed, namely, $a(\T,\V)=\sum_{\n,\l}a_{\n,\l}\T^{\n}\V^{\l}$ in $\mathscr M[[{\T,\V}]]$ and $b(\U,\V)=\sum_{\m,\l}b_{\m,\l}\U^{\m}\V^{\l}$ in $\mathscr N[[\U,\V]]$, their {\it $\V$-Hadamard product} is an element of $\mathscr M\otimes \mathscr N[[{\T,\U,\V}]]$ given by
\begin{align}\label{newHadamard}
a(\T,\V)\otimes_{\H} b(\U,\V):=\sum_{\n,\m,\l}\a_{\n,\l}\otimes \b_{\m,\l}\T^{\n}\U^{\m} \V^{\l}.
\end{align}

\begin{lemma}\label{lm52}
If $a(\T,\V)$ is in $\mathscr M[[{\T,\V}]]_{\inte}$ and $b(\U,\V)$ is in $\mathscr N[[\U]]_{\inte}[[\V]]_{\ssr}$, then the $\V$-Hadamard product $a (\T,\V)\otimes_{\H} b(\U,\V)$ is in $\mathscr M\otimes \mathscr N[[\T,\U,\V]]_{\inte}$.
\end{lemma}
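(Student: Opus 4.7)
The plan is to reduce to basic rational generators on both sides and then recognize the $\V$-Hadamard product as an explicit lattice sum whose integrability follows from the Denef--Loeser criterion on rational polyhedral cones. Using the $\mathbb Z[\Lbb]_{\loc}$-bilinearity of $\otimes_{\H}$ together with the module structures, I would reduce to monomial generators
\begin{align*}
a(\T, \V) &= a_0 \, \T^{\a} \V^{\b} \prod_{j=1}^p \frac{1}{1 - \Lbb^{-e_j} \T^{\n_j} \V^{\l_j}}, \\
b(\U, \V) &= n_0 \, \U^{\p} \V^{\c} \prod_{i=1}^L \frac{1}{1 - \Lbb^{-g_i} \U^{\q_i}} \prod_{k=1}^K \frac{1}{1 - \Lbb^{-f_k} \V^{\boldsymbol{\delta}_k}},
\end{align*}
with $e_j, g_i > 0$, $(\n_j, \l_j) \neq 0$, $\q_i \neq 0$, $f_k \geq 0$, $\boldsymbol{\delta}_k \in \mathbb N^{|\V|}\setminus\{0\}$, $a_0 \in \mathscr M$, and $n_0 \in \mathscr N$. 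Crucially, the hypothesis $b \in \mathscr N[[\U]]_{\inte}[[\V]]_{\ssr}$ only yields $f_k \geq 0$, so the $\V$-factors of $b$ are merely strongly rational.

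Expanding all geometric series, extracting $\V$-coefficients on each side, and reassembling via (\ref{newHadamard}), the product becomes
\begin{align*}
a \otimes_{\H} b \;=\; (a_0 \otimes n_0)\, \T^{\a} \U^{\p} \sum_{(\boldsymbol{r}, \k, \boldsymbol{s}) \in A} \Lbb^{-\sum_j e_j r_j - \sum_i g_i k_i - \sum_k f_k s_k}\, \T^{\sum_j r_j \n_j}\, \U^{\sum_i k_i \q_i}\, \V^{\b + \sum_j r_j \l_j},
\end{align*}
where
$$A = \Big\{(\boldsymbol{r}, \k, \boldsymbol{s}) \in \mathbb N^{p+L+K} \;\Big|\; \b + \sum_j r_j \l_j = \c + \sum_k s_k \boldsymbol{\delta}_k \Big\}.$$
The set $A$ is the intersection of $\mathbb N^{p+L+K}$ with an affine $\mathbb Z$-subspace, so by a standard fan decomposition (cf. \cite[Lemma 3.4]{DL2}) it is a finite disjoint union of translates of simplicial rational polyhedral cones, whose recession cone $A^\infty$ has extremal rays satisfying $\sum_j r_j \l_j = \sum_k s_k \boldsymbol{\delta}_k$.

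The decisive step is to check that along every extremal ray $(\boldsymbol{r}, \k, \boldsymbol{s})$ of $A^\infty$ the weight $\sum_j e_j r_j + \sum_i g_i k_i + \sum_k f_k s_k$ is strictly positive; granted this, the standard Denef--Loeser criterion (the same one used to justify (\ref{DL-form-version2})) gives $a \otimes_{\H} b \in \mathscr M \otimes \mathscr N[[\T, \U, \V]]_{\inte}$. Extremal rays along the unconstrained $\k$-coordinates are the standard basis vectors and contribute weight $g_i > 0$. For any other extremal ray, if $\k = 0$ and $\boldsymbol{r} = 0$ then $\sum_k s_k \boldsymbol{\delta}_k = 0$ with $s_k \geq 0$ and $\boldsymbol{\delta}_k \in \mathbb N^{|\V|}\setminus\{0\}$ forces every $s_k = 0$, contradicting extremality; hence $\boldsymbol{r} \neq 0$, so $\sum_j e_j r_j > 0$ since $e_j > 0$.

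The main obstacle is exactly this positivity check: because $b$ is only strongly rational in $\V$, the $\V$-denominators of $b$ carry no $\Lbb$-weight, and the required integrability in $\V$ must be supplied entirely by the coupling $\b + \sum_j r_j \l_j = \c + \sum_k s_k \boldsymbol{\delta}_k$ between the $\V$-exponents of $a$ and $b$. The argument hinges on each $\boldsymbol{\delta}_k$ being a nonzero vector in $\mathbb N^{|\V|}$, which rigidifies $A^\infty$ and prevents extremal rays purely in the $\boldsymbol{s}$-direction.
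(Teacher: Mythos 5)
Your proof is correct, but it takes a genuinely different route from the paper's. You reduce to monomial generators (legitimate, since both the $\V$-Hadamard product and integrability are stable under $\mathbb Z[\Lbb]_{\loc}$-linear combinations) and realize $a\otimes_{\H} b$ as a lattice sum over the rational polyhedron $A$; integrability then rests on verifying that the $\Lbb$-weight $\sum e_j r_j+\sum g_i k_i+\sum f_k s_k$ is strictly positive, and the monomial exponent nonzero, on every nonzero direction of the recession cone $A^\infty$. Your key observation --- that the matching constraint forces $\boldsymbol{r}=0\Rightarrow\boldsymbol{s}=0$, so the possibly-zero weights $f_k$ never occur in isolation --- is exactly right, and since it applies to every nonzero vector of $A^\infty$ (not merely to extremal rays of $A^\infty$ itself), it is robust to the choice of simplicial subdivision. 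The paper instead rewrites the $\V$-Hadamard product as a full $(\T,\U,\V)$-Hadamard product $\frac{a(\T,\V)}{\prod(1-U_j)}\otimes_{\H}\frac{b(\U,\V)}{\prod(1-T_i)}$ and invokes Lemma~\ref{lm51} twice --- once treating both sides as series in $\U$ with $(\T,\V)$-series coefficients to get $\U$-integrability, once as series in $(\T,\V)$ to get integrability of each $\U$-coefficient --- then concludes via the inclusions of Lemma~\ref{lm511}. The paper's route is shorter given Lemmas~\ref{lm51} and~\ref{lm511} and avoids any explicit fan decomposition; yours is more self-contained and makes the underlying polyhedral positivity mechanism transparent.
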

\begin{proof}
It is easy to see that the series $c(\T,\U,\V):=a(\T,\V)\otimes_{\H} b(\U,\V)$ can be presented as the Hadamard product of two elements of $\mathscr M\otimes \mathscr N[[{\T,\U,\V}]]$ as follows
\begin{align}\label{expressionC}
c(\T,\U,\V)=\frac{a (\T,\V)}{\prod (1-U_j)}\otimes_{\H} \frac{b (\U,\V)}{\prod (1-T_i)},
\end{align}
where $\prod (1-T_i):=(1-T_1)\cdots (1-T_r)$ and $\prod (1-U_j):=(1-U_1)\cdots (1-U_s)$. By setting $b(\U,\V)=\sum_{\m} b_{\m}(\V)\U^{\m}$, we may write the factors in (\ref{expressionC}) as follows
\begin{align*}
\frac{a (\T,\V)}{\prod (1-U_j)}&=\sum_{\m} a(\T,\V)\U^{\m} \in \mathscr M\otimes \mathscr N[[{\T,\V}]]_{\mathrm{int}}[[\U]]_{\ssr},\ \text{and}\\
 \frac{b (\U,\V)}{\prod (1-T_i)}&= \sum_{\m} \widetilde b_{\m}(\T,\V)\U^{\m}\in \mathscr M\otimes \mathscr N[[{\T,\V}]]_{\ssr}[[\U]]_{\inte},
\end{align*}
where 
$$\widetilde b_{\m}(\T,\V):=\frac{b_{\m}(\V)}{\prod (1-T_i)}.$$ 
This together with Lemma \ref{lm51} implies that $c(\T,\U,\V)\in \mathscr M\otimes \mathscr N[[\T,\V]]_{\ssr}[[\U]]_{\inte}$. Moreover, we have
$$c(\T,\U,\V)=\sum_{\m}a(\T,\V)\otimes_{\H}\widetilde b_{\m}(\T,\V)\U^{\m},$$
which belongs to $\mathscr M\otimes \mathscr N[[{\T,\V}]]_{\inte}[[\U]]$, by Lemma \ref{lm51}. It follows that $c(\T,\U,\V)$ is an element of $\mathscr M\otimes \mathscr N[[{\T,\V}]]_{\inte}[[\U]]_{\inte}$, hence an element of $\mathscr M\otimes \mathscr N[[\T,\U,\V]]_{\inte}$.
\end{proof}

Let $X_i$, $1\leq i\leq r$, be smooth algebraic $k$-varieties, and let $X:=X_1\times\cdots\times X_r$. As usual we use the multivariate $\T=(T_1,\dots,T_r)$. To each $1\leq i\leq r$ and formal series $a(\T)=\sum_{\n}a_{\n}\T^{\n}$ in $\mathscr M_X^{\hat\mu}[[\T]]$ associate a unique formal series $a_i(\T):=\sum_{\n}a_{\n}^{(i)}\T^{\n}$ in $\mathscr M_{X_i}^{\hat\mu}[[\T]]$ in such a way that $a^{(i)}_{\n}=(\pr_i)_!a_{\n}\in \mathscr M_{X_i}$, where $\pr_i$ is the natural projection of $X$ onto $X_i$. 

\begin{lemma}\label{lm53}
If the series $a(\T)$ is integrable, so are the series $a_{i}(\T)$ for $1\leq i\leq r$.
\end{lemma}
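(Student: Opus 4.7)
The plan is to view the coefficient-wise pushforward $(\pr_i)_!$ as a $\mathbb Z[\Lbb, \Lbb^{-1}]$-linear map of power series that carries integrable series over $X$ to integrable series over $X_i$.

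First I would verify that $(\pr_i)_!\colon \mathscr M_X^{\hat\mu} \to \mathscr M_{X_i}^{\hat\mu}$ is $\mathbb Z[\Lbb, \Lbb^{-1}]$-linear. Additivity is built into the definition of pushforward by composition. For compatibility with multiplication by $\Lbb$, one writes $\Lbb \cdot [Y \to X, \sigma] = [Y \times_k \mathbb A_k^1 \to X, \sigma']$, where $\sigma'$ extends $\sigma$ by the trivial action on $\mathbb A_k^1$; composing with $\pr_i$ then gives $[Y \times_k \mathbb A_k^1 \to X_i, \sigma'] = \Lbb \cdot (\pr_i)_! [Y, \sigma]$. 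Inverting $\Lbb$ extends this to $\mathbb Z[\Lbb, \Lbb^{-1}]$-linearity; equivalently one may invoke the projection formula $(\pr_i)_!(a \cdot \pr_i^* b) = (\pr_i)_!(a) \cdot b$ with $b = \Lbb^{\pm 1}$ and the identity $\pr_i^* \Lbb = \Lbb$.

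Second, I would extend $(\pr_i)_!$ coefficient-wise to a $\mathbb Z[\Lbb, \Lbb^{-1}]$-linear morphism $\mathscr M_X^{\hat\mu}[[\T]] \to \mathscr M_{X_i}^{\hat\mu}[[\T]]$, which by construction sends $a(\T)$ to $a_i(\T)$. Since $\mathscr M_X^{\hat\mu}[[\T]]_{\inte}$ is the $\mathbb Z[\Lbb, \Lbb^{-1}]$-submodule generated by $\mathscr M_X^{\hat\mu}[\T]$ together with the fractions $(1 - \Lbb^m \T^{\n})^{-1}$ for $(m, \n) \in \mathbb Z_{<0} \times (\mathbb N^r \setminus \{(0,\dots,0)\})$, writing $a(\T)$ as a finite combination of the form $\frac{P(\T)}{\prod_j (1 - \Lbb^{m_j} \T^{\n_j})}$ with $P(\T) \in \mathscr M_X^{\hat\mu}[\T]$ and applying $(\pr_i)_!$ yields $\frac{(\pr_i)_! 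P(\T)}{\prod_j (1 - \Lbb^{m_j} \T^{\n_j})}$, a presentation witnessing that $a_i(\T) \in \mathscr M_{X_i}^{\hat\mu}[[\T]]_{\inte}$.

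The lemma carries no substantial obstacle: the content reduces to the functoriality of the integrable submodule under $\mathbb Z[\Lbb, \Lbb^{-1}]$-linear coefficient maps, and the only verification of substance is the commutation of pushforward with multiplication by $\Lbb$, which is immediate from the definition of $\Lbb$ as the class of the trivial line bundle over any base.
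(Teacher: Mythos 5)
The paper omits the proof, stating only that it is ``straightforward,'' and your argument is a correct realization of that. You rightly identify that $(\pr_i)_!$ is $\mathbb Z[\Lbb,\Lbb^{-1}]$-linear (indeed $\mathbb Z[\Lbb]_{\loc}$-linear when working in $\MM_X^{\hat\mu}$), and that the coefficient-wise extension of such a map carries $\frac{P(\T)}{\prod_j(1-\Lbb^{m_j}\T^{\n_j})}$ to $\frac{(\pr_i)_!P(\T)}{\prod_j(1-\Lbb^{m_j}\T^{\n_j})}$ because the coefficients of the geometric-series expansion of each $(1-\Lbb^{m_j}\T^{\n_j})^{-1}$ lie in $\mathbb Z[\Lbb^{-1}]$, which $(\pr_i)_!$ commutes with; this gives an explicit integrable presentation of $a_i(\T)$.
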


Proof of this lemma is straightforward. 

\begin{definition}
For any $r\in\mathbb N_{>0}$ and $1\leq i\leq r$, a $r$-tuple $\n=(n_1,\dots,n_r)\in\mathbb N^r$ is said to have {\it the $\Delta_{i,<}$-property} (resp. {\it the $\Delta_{<}$-property}), written as $\n\in\Delta_{i,<}$-property (resp. $\n\in\Delta_{<}$-property) or simply as $\n\in\Delta_{i,<}$ (resp. $\n\in\Delta_<$), if
$$n_1<\dots<n_i=n_{i+1}=\dots=n_r\quad \text{(resp. $n_1<\dots<n_r$)}.$$ 
\end{definition}
We denote by $\mathscr M_X^{i,<}[[\T]]$ (resp. $\mathscr M_X^<[[\T]]$) the subset of $\mathscr M_X^{\hat\mu}[[\T]]$ consisting of formal series of the form $\sum_{\n\in\Delta_{i,<}}a_{\n}\T^{\n}$ (resp. $\sum_{\n\in\Delta_<}a_{\n}\T^{\n}$). We also have an analogous definition for $\MM_X^{i,<}[[\T]]$ and $\MM_X^<[[\T]]$ as subset of $\MM_X^{\hat\mu}[[\T]]$. By definition, for any $a(\T)$ in $\mathscr M_X^{i,<}[[\T]]$ (resp. in $\MM_X^{i,<}[[\T]]$), there exists a series $\widetilde a(T_1,\dots,T_i)$ in $\mathscr M_X^<[[T_1,\dots,T_i]]$ (resp. in $\MM_X^<[[T_1,\dots,T_i]]$) such that 
$$a(\T)=\widetilde a(T_1,\dots,T_{i-1},T_i\cdots T_r).$$

Let us now introduce a new notion of ordered cells. For an increasing sequence of positive integers $0=r_0<r_1<\cdots<r_i=r$ we define the {\it basic ordered cell} $\Delta_{(r_0,\dots,r_i)}$ to be the set
$$\left\{(n_1,\dots,n_r)\in\mathbb N^r \mid n_{r_{j-1}+1}=\cdots=n_{r_j}\ \mathrm{and}\ n_{r_{j-1}}<n_{r_j},\ 2 \leq j\leq i\right\}.$$
A subset $\Delta$ of $\mathbb{N}^r$ is called an {\it ordered cell } if it is the image of a basic ordered cell $\Delta_{(r_0,\dots,r_i)}$ under a permutation map $\rho\colon \mathbb{N}^r\to \mathbb{N}^r$ that sends $(n_1,\ldots,n_r)$ to $(n_{\rho(1)},\dots,n_{\rho(r)})$. It is easy to see that $\mathbb N^r$ can be partitioned into all the ordered cells $\Delta$. This implies that any formal series $a(\T)\in \MM_X^{\hat\mu}[[\T]]$ can be uniquely decomposed as a finite sum of formal series 
\begin{align}\label{aDelta}
a(\T)=\sum_{\Delta} a_{\Delta}\left(\T\right)=\sum_{\Delta}a_{\Delta}^{<}\Big(\prod_{l=1}^{r_1}T_{\rho(l)},\dots,\prod_{l=r_{i-1}+1}^{r_i}T_{\rho(l)}\Big),
\end{align}
where $a_{\Delta}(\T):=\sum_{\n\in\Delta} a_{\n}\T^{\n} $ and $a_{\Delta}^{<}\in \MM_X^{<}[[T_1,\dots,T_i]]$ in viewing $X$ as 
$$\prod_{l=1}^{r_1} X_{\rho(l)}\times\cdots\times \prod_{l=r_{i-1}}^{r_i}X_{\rho(l)}.$$ 

\begin{lemma}\label{lm54}
If the series $a(\T)$ is integrable, so are the series $a_{\Delta}(\T)$ for all ordered cells $\Delta$.
\end{lemma}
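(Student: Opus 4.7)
The plan is to exhibit each $a_\Delta(\T)$ as a Hadamard product of $a(\T)$ with the characteristic series of the ordered cell $\Delta$, and then invoke Lemma~\ref{lm51} to transfer integrability. For each ordered cell $\Delta \subset \mathbb N^r$, set
$$\chi_\Delta(\T) := \sum_{\n \in \Delta} \T^{\n},$$
viewed as an element of $\mathbb Z[\Lbb]_{\loc}[[\T]]$. Under the canonical identification $\MM_X^{\hat\mu} \otimes_{\mathbb Z[\Lbb]_{\loc}} \mathbb Z[\Lbb]_{\loc} \cong \MM_X^{\hat\mu}$, we have the tautological identity $a_\Delta(\T) = a(\T) \otimes_\H \chi_\Delta(\T)$, since the $\n$-th coefficient of the Hadamard product is $a_{\n} \otimes 1 = a_{\n}$ when $\n \in \Delta$ and $a_{\n} \otimes 0 = 0$ otherwise.

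The main point is therefore to prove that $\chi_\Delta(\T)$ is strongly rational. First I would reduce to the case of a basic ordered cell $\Delta = \Delta_{(r_0, \ldots, r_i)}$: a general $\Delta$ is the image of a basic cell under a coordinate permutation, so $\chi_\Delta$ is obtained from $\chi_{\Delta_{(r_0,\ldots,r_i)}}$ by relabeling the variables $T_k$, which preserves strong rationality. For the basic case, I plan to parametrize elements of $\Delta$ by their block values $m_j := n_{r_{j-1}+1} = \cdots = n_{r_j}$ constrained by $0 \leq m_1 < m_2 < \cdots < m_i$, and then pass to the successive differences $d_1 := m_1$, $d_j := m_j - m_{j-1}$ for $j \geq 2$. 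Collecting powers and setting $P_k := \prod_{l = r_{k-1}+1}^{r} T_l$, the sum factors as a product of geometric series, yielding the closed form
$$\chi_{\Delta_{(r_0, \ldots, r_i)}}(\T) = \frac{1}{1 - P_1} \prod_{j=2}^{i} \frac{P_j}{1 - P_j}.$$
Since each $P_k$ is a nonzero monomial $\T^{\mathbf e_k}$ with $\mathbf e_k \in \mathbb N^r \setminus \{(0,\dots,0)\}$, this exhibits $\chi_\Delta(\T)$ as an element of $\mathbb Z[\Lbb]_{\loc}[[\T]]_{\ssr}$.

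With this closed form in hand, Lemma~\ref{lm51} applied to $\mathscr M = \MM_X^{\hat\mu}$ and $\mathscr N = \mathbb Z[\Lbb]_{\loc}$, with $a(\T) \in \MM_X^{\hat\mu}[[\T]]_{\inte}$ and $\chi_\Delta(\T) \in \mathbb Z[\Lbb]_{\loc}[[\T]]_{\ssr}$, immediately yields $a_\Delta(\T) \in \MM_X^{\hat\mu}[[\T]]_{\inte}$, which is the desired conclusion. The only step that is not purely formal is the explicit rationality computation for $\chi_\Delta$, and even that amounts to a telescoping change of summation variables; I do not anticipate further obstacles.
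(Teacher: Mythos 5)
Your proposal is correct and follows essentially the same route as the paper's proof: reduce to a basic ordered cell, write $a_\Delta(\T)$ as the Hadamard product of $a(\T)$ with the characteristic series of $\Delta$, verify that this series equals $\prod_{j=2}^i P_j / \prod_{j=1}^i (1-P_j)$ (the paper's $\varepsilon(\T)$) and is therefore strongly rational, and conclude by Lemma~\ref{lm51}. You spell out the telescoping change of variables that the paper leaves implicit, but the argument is the same.
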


\begin{remark}
Actually, in view of Cluckers-Loeser's theory on constructible motivic functions one can show that the lemma also works for any definable subset of $\mathbb{N}^r$, cf. \cite[Lemma 4.5.8]{CL}.
\end{remark}

\begin{proof}[Proof of Lemma \ref{lm54}]
It suffices to prove that $a_{\Delta}\left(\T\right)$ is integrable for $\Delta=\Delta_{(r_0,\dots,r_i)}$ being a basic ordered cell. We can check easily that
$$a_{\Delta}\left(\T\right)=\varepsilon(\T)\cdot_{\H} a(\T),$$
where, be definition,
$$\varepsilon(\T):= \frac{\prod_{j=2}^i\left(\prod_{l=r_{j-1}+1}^rT_l\right)}{\prod_{j=1}^i\left(1-\prod_{l=r_{j-1}+1}^rT_l\right)},$$
which is strongly rational. Then the present lemma follows from Lemma \ref{lm51}.
\end{proof}

%--------------------
We consider the morphism of $\MM_X^{\hat\mu}$-modules
$$\Phi\colon \MM_X^{<}[[\T]]\to \MM_X^{<}[[\T]]$$ 
given by
$$\Phi\Big(\sum_{\n} a_{\n} \T^{\n}\Big)=(\mathbb L-1)^{1-r}\sum_{\n}a^{(1)}_{\n}\times\prod_{i=2}^r(a^{(i)}_{\n-\e_i}-a^{(i)}_{\n})\T^{\n},$$
where $a^{(i)}_{\n}:=(\pr_i)_!a_{\n}\in \MM_{X_i}$, $\pr_i$ is the natural projection of $X$ onto $X_i$, and $\e_i$ is the $i$-th standard vector in $\mathbb{Z}^r$, $1\leq i\leq r$. Here by $\prod_{i=2}^r$ we mean temporarily the external products. It is clear that $\Phi$ can be extended to an endomorphism of $\MM_X^{\hat\mu}[[\T]]$,
\begin{align}\label{Phi5}
\Phi: \MM_X^{\hat\mu}[[\T]]\to \MM_X^{\hat\mu}[[\T]],
\end{align}
by linearity, namely,
$$\Phi(a(\T)):=\sum_{\Delta} \Phi\Big(a_{\Delta}\Big(\prod_{l=1}^{r_1}T_{\rho(l)},\ldots,\prod_{l=r_{i-1}+1}^{r_i}T_{\rho(l)}\Big)\Big),$$ 
in terms of the decomposition of $a(\T)\in \MM_X^{\hat\mu}[[\T]]$ into finitely many terms of the form (\ref{aDelta}). 

Now we work with the restriction of $\Phi$ to the sub-$\MM_X^{\hat\mu}$-module $\MM_X^{\hat\mu}[[\T]]_{\inte}$ of $\MM_X^{\hat\mu}[[\T]]$.

\begin{lemma}\label{lm55}
The restriction of $\Phi$ to $\MM_X^{\hat\mu}[[\T]]_{\inte}$ is an automorphism. 
\end{lemma}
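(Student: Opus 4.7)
My plan is to view $\Phi$, on each ordered-cell component, as a concrete Hadamard-type operation by a strongly rational kernel, and to invert it explicitly at that level before reassembling via the cell decomposition of Lemma~\ref{lm54}. The first step is a reduction to a single cell: every $a\in \MM_X^{\hat\mu}[[\T]]_{\inte}$ decomposes by Lemma~\ref{lm54} as a finite sum of integrable pieces each supported on a basic ordered cell, and since the definition of $\Phi$ was extended by linearity along precisely this decomposition, bijectivity and integrability-preservation propagate from each cell to the full module. It therefore suffices to work on $\MM_X^{<}[[\T]]_{\inte}$ and then reassemble via the substitutions in \eqref{aDelta}.

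On $\MM_X^{<}[[\T]]$, I would unwind the defining formula of $\Phi$ as
\begin{equation*}
\Phi(a)(\T)=(\L-1)^{1-r}\, a_1(\T)\otimes_{\H}\bigl((T_2-1)a_2(\T)\bigr)\otimes_{\H}\cdots\otimes_{\H}\bigl((T_r-1)a_r(\T)\bigr),
\end{equation*}
where the $a_i$'s are the projected integrable series of Lemma~\ref{lm53} and $\otimes_{\H}$ is the Hadamard-type product whose coefficient multiplication is the external product $\times$. Integrability of $\Phi(a)$ then follows from four ingredients: Lemma~\ref{lm53} gives integrability of each $a_i$; multiplying by the polynomial $T_i-1$ preserves integrability; $\L-1$ is invertible in $\MM_X^{\hat\mu}$ by construction of the localization; and iterated use of Lemma~\ref{lm51} (and Lemma~\ref{lm52} when variables must be combined) keeps the Hadamard combination integrable.

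The inverse is the natural candidate $\Psi$ obtained by replacing every factor $(T_i-1)$ above by its formal reciprocal $(T_i-1)^{-1}=-\sum_{k\geq 0}T_i^k$, which is strongly rational, and by inverting the scalar to $(\L-1)^{r-1}$. Then $\Psi$ preserves integrability by the same Lemma~\ref{lm51} argument, and a direct coefficient-wise computation, using the associativity of the external product and the telescoping identity $(T_i-1)(T_i-1)^{-1}=1$, gives $\Psi\circ\Phi=\Phi\circ\Psi=\id$ on each cell. Assembling $\Psi$ through the cell decomposition of \eqref{Phi5} yields a two-sided inverse of $\Phi$ on all of $\MM_X^{\hat\mu}[[\T]]_{\inte}$.

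I expect the main obstacle to be the careful bookkeeping across the cell decomposition. One must verify that $\Phi$ acts ``block-diagonally'' with respect to the partition $\mathbb{N}^r=\bigsqcup_{\Delta}\Delta$ so that the cell-by-cell inverse $\Psi$ glues consistently, and one must check that the pushforward notation $a^{(i)}$ behaves well under the substitutions $\widetilde a(T_1,\ldots,T_i)\leftrightarrow a(\T)$ that identify $\MM_X^{i,<}[[\T]]$ with $\MM_X^{<}[[T_1,\ldots,T_i]]$. Once this compatibility is in place, the Hadamard-product reformulation makes both the integrability and the explicit inverse essentially formal.
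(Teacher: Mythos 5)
Your overall strategy matches the paper's: reduce to a single cell via Lemma~\ref{lm54}, present $\Phi$ there in a Hadamard-type form, give an explicit candidate inverse, and deduce that the inverse preserves integrability via Lemma~\ref{lm51}. Your reformulation
\begin{equation*}
\Phi(a)(\T)=(\L-1)^{1-r}\, a_1(\T)\otimes_{\H}\bigl((T_2-1)a_2(\T)\bigr)\otimes_{\H}\cdots\otimes_{\H}\bigl((T_r-1)a_r(\T)\bigr)
\end{equation*}
is a correct unwinding of the defining formula for $\Phi$ on $\MM_X^{<}[[\T]]$, and the four ingredients you list for the integrability of $\Phi(a)$ are sound.

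Where you diverge from the paper is in the candidate inverse. Your $\Psi$ replaces $(T_i-1)$ by $(T_i-1)^{-1}=-\sum_{k\geq 0}T_i^k$, so its coefficient of $\T^{\n}$ involves the finite ``backward'' sums $-\sum_{k\geq 0}a^{(i)}_{\n-k\e_i}$ (finite because the support lies in $\Delta_<$). The paper's $\Phi^{-1}$ instead uses the infinite ``forward'' tail sums $\sum_{l>1}a^{(i)}_{\n+l\e_i}$, whose convergence relies essentially on the integrability hypothesis; the paper then proves integrability of $\Phi^{-1}(a)$ by an explicit computation expressing these tail sums as $\gamma_i(\T)/(1-T_i)$ with $\gamma_i$ integrable. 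These two formulas are genuinely different maps on $\MM_X^{<}[[\T]]_{\inte}$, not merely alternative presentations, so you should be aware that you are not reconstructing the paper's proof; you are proposing a competing inverse. This is not automatically wrong, but it is not the paper's argument either.

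The genuine gap is the sentence claiming $\Psi\circ\Phi=\Phi\circ\Psi=\id$ ``by a direct coefficient-wise computation \ldots using the telescoping identity $(T_i-1)(T_i-1)^{-1}=1$.'' This does not follow as stated. Your $\Psi$ is defined on a series $b$ by first passing to the pushed-forward series $b_i(\T)$ and then Hadamard-combining; so $\Psi(\Phi(a))$ is built from $(\Phi(a))_i$, not from the original Hadamard factors $(T_i-1)a_i(\T)$. The coefficient of $\Phi(a)$ at $\T^{\n}$ is an external product $a^{(1)}_{\n}\times\prod_{i\geq 2}(a^{(i)}_{\n-\e_i}-a^{(i)}_{\n})$, and applying $(\pr_i)_!$ to such an external product does not return the $i$-th factor: it returns the $i$-th factor multiplied by the classes of the remaining factors pushed to a point, and these multipliers depend on $\n$. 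Consequently the ``telescoping'' $(T_i-1)(T_i-1)^{-1}=1$ is not what happens in $\Psi\circ\Phi$; you would instead have to track how $(\Phi(a))_i$ relates to $a_i$ through the pushforward, which is exactly the bookkeeping you flagged as a possible obstacle but did not carry out. Until that verification is supplied, the proposal establishes at most that $\Psi$ is a well-defined integrability-preserving endomorphism, not that it inverts $\Phi$; the lemma is not yet proved.
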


\begin{proof}
Define the morphism $\Phi^{-1}: \MM_X^{<}[[\T]]_{\inte} \to \MM_X^{<}[[\T]]_{\inte}$ as follows
$$a(\T)=\sum_{\n\in\Delta_{<}} a_{\n} \T^{\n}\mapsto (\mathbb L-1)^{r-1}\sum_{\n\in\Delta_{<}}a^{(1)}_{\n}\times \prod_{i=2}^r \Big(\sum_{l>1}a^{(i)}_{\n+l\e_i}\Big)\T^{\n},$$
with $\prod_{i=2}^r$ being the external products at the moment. Let us show that $\Phi^{-1}(a(\T))$ is an integrable series. We first prove that, for any $2\leq i\leq r$, 
$$(\Lbb-1)\sum_{\n\in\Delta_{<}}\sum_{l>1}\a^{(i)}_{\n+l\e_i} \T^{\n}=\frac{\gamma_i(\T)}{1-T_i},$$
for some $\gamma_{i}(\T)\in \MM_{X_i}^{\hat\mu}[[\T]]_{\inte}$. Indeed, by setting $\hat{\n}_i:=\n-n_i\e_i$ and $\hat{\T}_i:=\T-(T_i-1)\e_i$, $2\leq i\leq r$, we have
\begin{align*}
(\Lbb-1)\sum_{\n\in\Delta_{<}}\sum_{l>1}\a^{(i)}_{\n+l\e_i} \T^{\n}&=(\Lbb-1)\sum_{l>1}\sum_{\hat{\n}_i\in\Delta_{<}}\a^{(i)}_{\hat{\n}_i+l\e_i}{\hat{\T}_i}^{\hat{\n}_i} \sum_{i\leq n_i<l}{T_i}^{n_i}\\
&=(\Lbb-1)\sum_{l>1}\sum_{\hat{\n}_i\in\Delta_{<}}\a^{(i)}_{\hat{\n}_i+l\e_i}{\hat{\T}_i}^{\hat{\n}_i} \frac{{T_i}^{i}-T_i^{l}}{1-T_i}\\
&=\frac{(\Lbb-1){T_i}^i}{1-T_i} a_i(\hat{\T}_i) -\frac{(\Lbb-1)a_i(\T)}{1-T_i} ,
\end{align*}
which has the form as desired. It therefore follows that
\begin{align*}
\Phi^{-1}(a(\T))=a_1(\T)\cdot_{\H}\frac{\gamma_2(\T)}{1-T_2}\cdot_{\H}\dots \cdot_{\H}\frac{\gamma_r(\T)}{1-T_r},
\end{align*}
which is obviously integrable due to Lemma \ref{lm51}. By the decomposition (\ref{aDelta}), the morphism $\Phi^{-1}$ may be extended to $\MM_X^{\hat\mu}[[\T]]_{\inte}$. It is easily checked that $\Phi\circ \Phi^{-1}=\Phi^{-1}\circ \Phi=\id_{\MM_X^{\hat\mu}[[\T]]_{\inte}}$. The lemma is thus proved. 
\end{proof}

\begin{corollary}\label{integrabilitymmzf}
Let $\f=(f_1,\dots,f_r)$ be an ordered family of regular functions on $X_1,\dots,X_r$. Then the multiple motivic zeta function $\zeta_{\f}(\T)$ is an integrable series, i.e., $\zeta_{\f}(\T)\in \MM_X^{\hat\mu}[[\T]]_{\inte}$.
\end{corollary}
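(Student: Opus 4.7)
The plan is to express $\zeta_{\f}(\T)$ as the restriction to the ordered cell $\Delta_<$ of an external product of integrable univariate series, and then invoke Lemma \ref{lm54}. Write $d_i := \dim X_i$, $d := \sum_i d_i$, and $\alpha_{i,n} := [\mathscr X_n(f_i)]\Lbb^{-nd_i} \in \mathscr M_{X_i}^{\hat\mu}$, so that $\zeta_{f_i}(T_i) = \sum_{n\geq 1}\alpha_{i,n}T_i^n$. Since the defining conditions of $\mathscr D_{\n}(\f)$ split componentwise in $X = X_1\times\cdots\times X_r$, depend only on the $\pi_{n_i}^{|\n|}$-truncations (each a trivial $\mathbb{A}^{(|\n|-n_i)d_i}$-fibration), and since the $\mu_{n_1}$-action is trivial on the factors with $i\geq 2$, I obtain the factorisation
\[
[\mathscr D_{\n}(\f)]\,\Lbb^{-|\n|d} \;=\; \alpha_{1,n_1}\times\prod_{i=2}^r(\beta_i)_{n_i} \;\in\; \MM_X^{\hat\mu},
\]
where $(\beta_i)_{n_i} := [E_i]\,\Lbb^{-n_id_i} \in \MM_{X_i}$ with trivial $\hat\mu$-action, and $E_i := \{\psi\in\mathscr L_{n_i}(X_i) : \ord f_i(\psi) > n_i\}$. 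A motivic-measure computation in the spirit of the proof of Lemma \ref{lem32}, decomposing $\{\ord f_i>n_i\}\subset\mathscr L(X_i)$ into the strata $\{\ord f_i = l\}$ for $l > n_i$, then yields $(\beta_i)_{n_i} = (\Lbb-1)\sum_{l>n_i}\alpha'_{i,l}$, where $\alpha'_{i,l}\in\MM_{X_i}$ is the augmentation of $\alpha_{i,l}$.

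Packaging the $(\beta_i)_{n_i}$ into the univariate series $G_i(T_i) := \sum_{n\geq 1}(\beta_i)_n T_i^n$ for $i\geq 2$, and writing $\alpha'_i(T_i) := \sum_{l\geq 1}\alpha'_{i,l}T_i^l$ for the augmentation of $\zeta_{f_i}(T_i)$, a telescoping identity gives
\[
G_i(T_i) \;=\; (\Lbb-1)\cdot\frac{T_i\,\alpha'_i(1)-\alpha'_i(T_i)}{1-T_i}.
\]
The next step is the auxiliary fact that if $F(T)\in\MM[[T]]_{\inte}$, then $\bigl(TF(1)-F(T)\bigr)/(1-T)\in\MM[[T]]_{\inte}$: by $\MM$-linearity reduce to $F(T)=Q(T)/\prod_j(1-\Lbb^{m_j}T^{n_j})$ with $Q\in\MM[T]$, $m_j<0$, $n_j>0$; then $F(1)\in\MM$ is well-defined and the polynomial $R(T):=TF(1)\prod_j(1-\Lbb^{m_j}T^{n_j})-Q(T)$ vanishes at $T=1$, so $R(T)=(1-T)S(T)$ for some $S\in\MM[T]$, whence $\bigl(TF(1)-F(T)\bigr)/(1-T)=S(T)/\prod_j(1-\Lbb^{m_j}T^{n_j})$ is manifestly integrable. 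Applied to $F=\alpha'_i$, which is integrable because $\zeta_{f_i}(T_i)$ is (Theorem \ref{DL98}) and the augmentation preserves the integrable structure, with $\alpha'_i(1)\in\MM_{X_i}$ well-defined because integrable univariate series are of finite mass, this shows $G_i(T_i)\in\MM_{X_i}[[T_i]]_{\inte}$ for each $i\geq 2$.

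Finally, form the external product $F(\T) := \zeta_{f_1}(T_1)\times G_2(T_2)\times\cdots\times G_r(T_r) \in \MM_X^{\hat\mu}[[\T]]$. Each univariate factor is integrable, and their external product is multivariate integrable since the only denominators involved are of the form $1-\Lbb^mT_i^n$ with $m<0$, $n>0$, which are valid multivariate integrable building blocks. By the factorisation above, the coefficient of $\T^{\n}$ in $F(\T)$ equals $(\zeta_{\f})_{\n}$ for every $\n\in\mathbb N_{>0}^r$, so $\zeta_{\f}(\T)$ is precisely the restriction of $F(\T)$ to the ordered cell $\Delta_<$. By Lemma \ref{lm54} this restriction remains integrable, which concludes the proof. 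The main technical step is the auxiliary fact about $\bigl(TF(1)-F(T)\bigr)/(1-T)$; the rest is careful book-keeping of the geometric decomposition, the $\mu_{n_1}$-action, and the interplay between external products and augmentations.
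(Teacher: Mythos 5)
Your proof is correct, and it takes a genuinely different route from the paper's. The paper proves the corollary by chaining three of its lemmas: it factors the trivial-cone zeta function $Z_{\f}^{\mathbb N_{>0}^r}(\T)$ as $Z_{f_1}(T_1)\times_{\H}\cdots\times_{\H}Z_{f_r}(T_r)$ and invokes Lemma \ref{lm52} for integrability, restricts to the ordered cell $\Delta_<$ via Lemma \ref{lm54}, and finally passes from $Z_{\f}^{\Delta_<}(\T)$ to $\zeta_{\f}(\T)$ through the operator $\Phi^{-1}$, whose preservation of integrability is the content of Lemma \ref{lm55}. You instead factor the coefficients of $\zeta_{\f}$ directly as $\alpha_{1,n_1}\times\prod_{i\geq 2}(\beta_i)_{n_i}$, package the tail-sums $(\beta_i)_{n_i}=(\Lbb-1)\sum_{l>n_i}\alpha'_{i,l}$ into closed-form univariate series $G_i(T_i)=(\Lbb-1)\bigl(T_i\alpha'_i(1)-\alpha'_i(T_i)\bigr)/(1-T_i)$, and show these are integrable by an elementary polynomial-division trick: after clearing denominators, the numerator vanishes at $T=1$ and hence is divisible by $1-T$, so the offending factor $(1-T)^{-1}$ (which has $m=0$ and is only strongly rational, not integrable on its own) cancels. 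You then realize $\zeta_{\f}(\T)$ as the $\Delta_<$-restriction of the external product $\zeta_{f_1}(T_1)\times G_2(T_2)\times\cdots\times G_r(T_r)$ and finish with Lemma \ref{lm54}, exactly as the paper does at that stage. The net effect is that you bypass Lemmas \ref{lm52} and \ref{lm55}: you do not need the Hadamard-product machinery for mixed variables, and you replace the $\Phi^{-1}$-automorphism argument (which in the paper handles arbitrary integrable series and uses a Hadamard product against a strongly rational series) with an explicit cancellation valid in the special, product-of-univariates situation at hand. This makes the proof of this particular corollary more self-contained and elementary; the paper's route is more economical in context because Lemma \ref{lm55} is required anyway for the subsequent Theorem \ref{associativity}, where $\Phi^{-1}$ is applied to series that are not a priori products of univariates.
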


\begin{proof}
Let $a(\T)= Z_{\f}^{\mathbb N_{>0}^r}(\T)$ be the motivic zeta function with respect to the trivial cone $\mathbb N_{>0}^r$ defined as in Definition \ref{MMZF}. Then we have
$$a(\T)=Z_{f_1}(T_1)\times_{\H}\cdots \times_{\H} Z_{f_r}(T_r),$$
it is therefore integrable due to Lemma \ref{lm52}. On the other hand, we deduce from Lemma \ref{lm54} that the series $a_{\Delta_{<}}(\T)=Z^{\Delta_{<}}_{\f}(\T)$ is integrable. Since the identity  $\Phi^{-1}\left(Z^{\Delta_{<}}_{\f}(\T)\right)=\zeta_{\f}(\T)$ holds in $\MM_X^{<}[[\T]]$, Lemma \ref{lm55} gives us the integrability of the series $\zeta_{\f}(\T)$.
\end{proof}

%-----------------

\subsection{Generalized $\boxast$-product}
Let $X_i$ and $Y_j$, $1\leq i\leq r$, $1\leq j\leq s$, be smooth algebraic $k$-varieties, and let 
\begin{align}\label{XY}
X:=X_1\times\cdots\times X_r\quad \text{and}\quad Y:=Y_1\times\cdots\times Y_s.
\end{align}
As usual we also use the multivariates $\T=(T_1,\dots,T_r)$ and $\U=(U_1,\dots,U_s)$. Now for tuples $\n=(n_1,\dots,n_r)$ and ${\bf m}=(m_1,\dots,m_s)$ having the $\Delta_{<}$-property, we let
$$I:=I_{\n,\m}:=\{(i,j)\in \mathbb N^2\mid n_i=m_j\},$$
and let $I_1$ (resp. $I_2$) be the image of $I$ under the projection on the first component (resp. the second component). Then, to define the $\boxast$-product of a series in $\MM_X^{\hat\mu}[[\T]]$ and a series in $\MM_Y^{\hat\mu}[[{\U}]]$ it suffices to define the $\boxast$-product of a series in $\MM_X^{<}[[\T]]$ and a series in $\MM_Y^{<}[[{\U}]]$.

\begin{definition}\label{n-product}
Let $a(\T)=\sum a_{\n}\T^{\n}$ and $b({\U})=\sum b_{\m}\U^{\m}$ be formal series in $\MM_X^{<}[[\T]]$ and $\MM_Y^{<}[[{\U}]]$, respectively. We define the product $a(\T)\boxast b(\U)$ in two steps as follows.
%\begin{itemize}

\medskip
(i) Put
$$a(\T)\boxast_0 b({\U}):=\sum_{\n\in \Delta_<,\m\in \Delta_<}c_{\n,\m}\T^{\n}{\U}^{\m},$$
where 
$$c_{\n,\m}=\prod_{i\not\in I_1}a^{(i)}_{\n}\times \prod_{j\not\in I_2}b^{(j)}_{\m}\times \prod_{(i,j)\in I} \widetilde{c}_{\n,\m}^{(i,j)},$$
and, for any $(i,j)\in I$, the quantity $\widetilde{c}_{\n,\m}^{(i,j)}$ is defined to be
$$-a^{(i)}_{\n}\ast b^{(j)}_{\m}+\sum_{0\leq l< n_i}\mathbb L^{-l}a^{(i)}_{\n-le_i}\ast_0 b^{(j)}_{\m-le_j}+(\Lbb-1)\sum_{l>0}\left(a^{(i)}_{\n}\times (b^{(j)}_{\m+le_j})'+(a^{(i)}_{\n+le_i})'\times b^{(j)}_{\m}\right)$$
with $z'$ the image of $z$ under the augmentation map.

(ii) Put
$$a(\T)\boxast b({\U}):=\Phi^{-1}\left(\Phi (a(\T))\boxast_0 \Phi(b({\U}))\right),$$
where $\Phi$ is defined previously in (\ref{Phi5}).
%\end{itemize}
\end{definition}

It is clear that the $\boxast$-product in Definition \ref{n-product} is well defined since $\Phi$ is well defined. Moreover, when reduced to the univariate case, i.e., $r=s=1$, this product is nothing else than the one defined in Definition \ref{2-product}.

\begin{theorem}\label{preservingInt}
With previous notation and hypotheses, if $a(\T)$ is in $\MM_X^{\hat\mu}[[\T]]_{\inte}$ and $b(\U)$ is in $\MM_Y^{\hat\mu}[[\U]]_{\inte}$, then $a(\T)\boxast b(\U)$ is in $\MM_{X\times Y}^{\hat\mu}[[\T,\U]]_{\inte}$. 
\end{theorem}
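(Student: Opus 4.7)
The strategy has two phases: a reduction to the basic strictly-increasing case via the ordered cell decomposition and the automorphism $\Phi$, followed by a stratification of $\boxast_0$ by coincidence patterns of indices, so that each stratum is handled by Lemmas \ref{lm51} and \ref{lm52}.

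For the reduction, I would apply (\ref{aDelta}) to decompose $a(\T)$ and $b(\U)$ into finite sums over ordered cells; by Lemma \ref{lm54} each cell component remains integrable. After the monomial substitution that identifies a basic cell with $\Delta_<$ in a reduced set of variables, each component lies in the strictly-increasing submodule over an appropriate product base. Since Lemma \ref{lm55} says $\Phi$ and $\Phi^{-1}$ are automorphisms of the integrable submodule, and $\boxast$ is defined as $\Phi^{-1}\circ \boxast_0\circ(\Phi\times\Phi)$, it is enough to prove that $\boxast_0$ sends $\MM_X^{<}[[\T]]_{\inte}\times\MM_Y^{<}[[\U]]_{\inte}$ into $\MM_{X\times Y}^{\hat\mu}[[\T,\U]]_{\inte}$.

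Next, I would stratify the double sum defining $a(\T)\boxast_0 b(\U)$ according to the coincidence set $I=I_{\n,\m}$. Strict monotonicity of $\n$ and $\m$ forces any realizable $I$ to be the graph of an order-preserving partial injection from a subset of $\{1,\dots,r\}$ to a subset of $\{1,\dots,s\}$, and only finitely many such $I$ occur. For each fixed $I$, the coincident values $n_i=m_j$ with $(i,j)\in I$ form a single strictly increasing sequence, and the free indices $n_i$ ($i\notin I_1$) and $m_j$ ($j\notin I_2$) interleave with them while preserving order, giving the indexing set an explicit product structure. The coefficient $c_{\n,\m}$ then factors as an external product of three types of blocks: for each $i\notin I_1$, a one-variable integrable series in $T_i$ obtained from $a$ by Lemma \ref{lm53}; a symmetric block in $U_j$ for each $j\notin I_2$; and for each $(i,j)\in I$, a two-variable block in $(T_i,U_j)$ of exactly the shape of Definition \ref{2-product}. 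Assembling these factors via the external product $\times$ and the $\V$-Hadamard product (\ref{newHadamard}), where $\V$ collects the coincident monomials $T_iU_j$, and invoking Lemmas \ref{lm51} and \ref{lm52}, yields an integrable series; summing over the finitely many patterns $I$ finishes the argument.

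The main obstacle is showing integrability of the two-variable coincident block
$$-a^{(i)}_{\n}\ast b^{(j)}_{\m}+\sum_{0\leq l<n_i}\Lbb^{-l}a^{(i)}_{\n-l\e_i}\ast_0 b^{(j)}_{\m-l\e_j}+(\Lbb-1)\sum_{l>0}\bigl(a^{(i)}_{\n}\times (b^{(j)}_{\m+l\e_j})'+(a^{(i)}_{\n+l\e_i})'\times b^{(j)}_{\m}\bigr),$$
in which convolution, $\ast_0$, external product, and augmented tail sums all mix. The tail sums converge because of the integrability (hence finite mass) hypothesis, and the whole block can be rewritten as a finite combination of Hadamard products of integrable and strongly rational series by exactly the manipulation performed on the terms $A_1,A_2,B_1,B_2$ in the proof of Theorem \ref{commutativity} (applied here in the variables $T_i,U_j$ instead of on the diagonal $T=U$). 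Once this univariate-on-$(T_i,U_j)$ building block is recognized as integrable, the multivariate assembly across all coordinates and patterns $I$ is a formal application of Lemmas \ref{lm51} and \ref{lm52}.
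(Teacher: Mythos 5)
Your proposal matches the paper's proof: the same reduction via ordered cells, Lemma \ref{lm54}, and the automorphism $\Phi$ to the case of $\boxast_0$ on $\MM^{<}$-series, followed by the same factorization into single-index blocks and two-variable coincident blocks handled by Lemmas \ref{lm51}--\ref{lm52}, with the tail sums converted to closed rational forms exactly as in the manipulations of $A_1,A_2,B_1,B_2$. The paper presents the coincident block $c_{ij}(\T,\U)$ as an explicit four-term rational expression using the $\alpha_i,\beta_j$ from Lemma \ref{lm55}, which is precisely the rewriting you describe.
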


\begin{proof}
We first consider $a(\T)$ and $b(\U)$ in $\MM^{<}_X[[\T]]_{\inte}$ and $\MM^{<}_Y[[\U]]_{\inte}$, respectively. It follows from the proof of Lemma \ref{lm55} that 
$$(\mathbb L-1)\sum_{\n\in\Delta_{<}}\sum_{l>0}\a^{(i)}_{\n+l\e_i} \T^{\n}=\frac{\alpha_i(\T)}{1-T_i},$$
and that
$$(\mathbb L-1)\sum_{\m\in\Delta_{<}}\sum_{l>0}\b^{(j)}_{\m+l\e_j} \U^{\m}=\frac{\beta_j(\U)}{1-U_j},$$
for some integrable series $\alpha_{i}(\T)$ and $\beta_j(\U)$. Then, by simple computation, we deduce that
\begin{align*}
a(\T)\boxast_0 b(\U)=\prod_{i\not\in I_1}a_i(\T)\times_{\H} {\prod_{j\not\in I_2}} b_j(\U)\times_{\H} {\prod_{(i,j)\in I}} c_{ij}(\T,\U),
\end{align*}
where for each $(i,j)\in I$, the series $c_{ij}(\T,\U)$ is equal to
$$-a_i(\T_i)\ast_{\H} b_j(\U_j)+\frac{a_i(\T_i){\ast_0}_{\H} b_j(\U_j)}{1-\mathbb{L}^{-1}T_iU_j}+a_i(\T_i)
\times_{\H}\frac{\beta_j(\U_j)}{1-U_j}+\frac{\alpha_i(\T_i)}{1-T_i}\times_{\H} b_j(\U_j),$$
where 
$$\T_i:=(T_1,\dots,T_{i-1},T_iU_j,T_{i+1},\dots,T_r)$$ 
and 
$$\U_j:=(U_1,\dots,U_{j-1},T_iU_j,U_{j+1},\dots,U_s).$$ 
By using Lemma \ref{lm52}, we get the integrability of the series $a(\T)\boxast_0 b(\U)$. The theorem is then follows from Lemma \ref{lm54} and \ref{lm55}.
\end{proof}

%--------------------
\subsection{Motivic reflexion formulas}
In this paragraph, we formulate the motivic reflexion formulas for the multivariate case that generalize the motivic Euler reflexion formula. As a consequence, we show that the $\boxast$-product is associative in the class of motivic multiple zeta functions defined in Definition \ref{MMZF}. A corollary of the associativity will be also given.

\begin{theorem}\label{associativity}
Let $\f=(f_1,\dots, f_r)$ and $\g=(g_1,\dots, g_s)$ be ordered families of regular functions on smooth algebraic $k$-varieties $X_1,\dots, X_r$ and $Y_1,\dots, Y_s$, respectively. Then
\begin{equation*}
\zeta_{\f}(\T)\boxast \zeta_{\g}(\U)
=\sum \iota^*\zeta_{p_1,\dots, p_{\eta}}(T_{\alpha_1}^{a_1}U_{\beta_1}^{b_1},\dots,T_{\alpha_{\eta}}^{a_{\eta}}U_{\beta_{\eta}}^{b_{\eta}}),
\end{equation*}
where the sum is taken over all the ordered families of regular functions $(p_1,\ldots,p_{\eta})$ satisfying 
$$p_i=a_i f_{\alpha_i}\oplus b_i g_{\beta_i},\ 1\leq i\leq \eta,$$
with $(a_i,b_i)\in \{0,1\}^2\setminus \{(0,0)\}$, $\sum(a_i+b_i)=r+s$, and $\{\alpha_i\}_{a_i=1}$ and $\{\beta_i\}_{b_i=1}$ being strictly monotonic increasing sequences; $\iota$ is the inclusion of $X_0\times Y_0$ in $X\times Y$ (cf. (\ref{XY})). 
\end{theorem}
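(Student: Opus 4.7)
The overall plan is to establish the identity monomial-by-monomial in the ring $\MM_{X_0\times Y_0}^{\hat\mu}[[\T,\U]]$. Both sides are integrable: the left hand side by Corollary \ref{integrabilitymmzf} together with Theorem \ref{preservingInt}, and the right hand side by Corollary \ref{integrabilitymmzf} applied to each summand. Since $\zeta_{\f}(\T)$ is supported on tuples with the $\Delta_<$-property and similarly for $\zeta_{\g}(\U)$, it suffices to fix a pair $(\n,\m)$ with $\n,\m$ strictly increasing and to verify equality of the coefficients of $\T^{\n}\U^{\m}$.

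For the right hand side, the key combinatorial observation is that each such pair $(\n,\m)$ singles out a unique interleaved family $(p_1,\dots,p_\eta)$ in the sum contributing to the coefficient of $\T^{\n}\U^{\m}$. Namely, letting $I=I_{\n,\m}:=\{(i,j):n_i=m_j\}$, which is a partial matching because of the strict monotonicity of $\n$ and $\m$, and denoting by $I_1,I_2$ its projections, the relevant interleaving takes $p_k=f_{\alpha_i}\oplus g_{\beta_j}$ at each $(i,j)\in I$, and $p_k=f_{\alpha_i}$ or $p_k=g_{\beta_j}$ at the unmatched indices, with the ordering dictated by the merged increasing sequence of $\{n_i\}\cup\{m_j\}$. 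All other interleavings contribute zero to this coefficient because their substitution patterns cannot fit $(\n,\m)$.

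For the left hand side, I would unwind Definition \ref{n-product}: the product reads $\Phi^{-1}\bigl(\Phi(\zeta_\f)\boxast_0\Phi(\zeta_\g)\bigr)$. Computing $\Phi(\zeta_\f)$ and $\Phi(\zeta_\g)$ explicitly via the formula defining $\Phi$, forming $\boxast_0$ as in Definition \ref{n-product}(i), and then applying $\Phi^{-1}$, the coefficient of $\T^{\n}\U^{\m}$ factors, with appropriate $\Lbb$-twists, into an external product of three types of factors: the canonical coefficients of $\zeta_{f_{\alpha_i}}$ at positions $i$ outside $I_1$; the canonical coefficients of $\zeta_{g_{\beta_j}}$ at positions $j$ outside $I_2$; and, for each $(i,j)\in I$, an interaction term $\widetilde{c}^{(i,j)}_{\n,\m}$ of precisely the shape appearing in the univariate $\boxast$-product from Definition \ref{2-product}.

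The matching of the two sides is then achieved by invoking the univariate reflexion at each matched pair: by Lemmas \ref{lem31}, \ref{lem32} and \ref{lem33}, the interaction term $\widetilde{c}^{(i,j)}_{\n,\m}$ is exactly the coefficient of $(T_{\alpha_i}U_{\beta_j})^{n_i}$ in $\iota^*\zeta_{f_{\alpha_i}\oplus g_{\beta_j}}$. Combined with the external product contributions at unmatched positions, this reproduces the coefficient of $\T^{\n}\U^{\m}$ in the appropriate summand of the right hand side. The main obstacle I anticipate is the bookkeeping around the $\Phi^{-1}$ transform: one must verify that the strict-inequality supports on both sides are correctly aligned through the identification, and that the various rational factors $(1-T_i)^{-1},(1-U_j)^{-1},(1-\Lbb^{-1}T_iU_j)^{-1}$ produced in the proof of Theorem \ref{preservingInt} reorganize exactly to mirror the resolution-based formula (\ref{DL-form-version2}) for the $\zeta_{p_1,\dots,p_\eta}$ appearing on the right hand side.
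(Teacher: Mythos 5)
Your overall strategy — coefficient-by-coefficient comparison, identifying the unique interleaving $\p$ that contributes to a given monomial $\T^{\n}\U^{\m}$, factoring the $\boxast_0$-coefficient into unmatched and interaction pieces, and recognizing $\widetilde{c}^{(i,j)}_{\n,\m}$ as the $T^nU^n$-coefficient of $\iota^*\zeta_{f_{\alpha_i}\oplus g_{\beta_j}}$ via Lemmas \ref{lem31}, \ref{lem32}, \ref{lem33} — is the right skeleton, and the paper uses it too. But the step where you would apply $\Phi^{-1}$ and then directly compare to the coefficients of $\sum_\p\iota^*\widetilde\zeta_\p$ does not close as written, and this is exactly where the paper does something you do not.

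The issue is a mismatch of ``worlds.'' The product in Definition \ref{n-product} is $\Phi^{-1}\bigl(\Phi(\zeta_\f)\boxast_0\Phi(\zeta_\g)\bigr)$, and the clean factorization you describe — external products of classical Denef--Loeser zeta coefficients at unmatched indices times interaction terms $\widetilde{c}^{(i,j)}_{\n,\m}$ at matched indices — happens at the $\boxast_0$ level, i.e.\ \emph{before} $\Phi^{-1}$ is applied. After $\Phi^{-1}$ the coefficients live back in the ``$\ord>n$'' world of $\mathscr D_\n$-type classes, and they no longer factor into those same pieces. Symmetrically, the coefficients of $\iota^*\zeta_{p_1,\dots,p_\eta}$ on the right are multiple-zeta coefficients (again with $\ord>n$ conditions), not products of classical single-variable coefficients. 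So your proposed identification ``$\widetilde{c}^{(i,j)}_{\n,\m}$ equals the coefficient of $\iota^*\zeta_{f_{\alpha_i}\oplus g_{\beta_j}}$, hence combining with unmatched contributions gives the coefficient of the RHS'' compares quantities living on opposite sides of $\Phi$. You flag precisely this as ``the main obstacle,'' but you leave it open, and it is not merely bookkeeping: without resolving it the argument does not terminate.

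The paper resolves it by not applying $\Phi^{-1}$ at all. Since $\Phi$ is an automorphism of $\MM^{\hat\mu}_{X_0\times Y_0}[[\T,\U]]_{\inte}$ by Lemma \ref{lm55}, the stated identity is equivalent to
$$\Phi(\zeta_{\f}(\T))\boxast_0 \Phi(\zeta_{\g}(\U))=\sum_{\p}\Phi\bigl(\iota^*\widetilde\zeta_{\p}\bigr),$$
and one works entirely at the $\boxast_0$ level. The crucial computation, which you did not carry out, is that $\Phi$ sends a multiple zeta function back to classical-coefficient form: writing $\Phi(\zeta_\f(\T))=\sum a_\n\T^\n$ one has $a^{(i)}_\n = [\{\varphi\in\mathscr L_{n_i}(X_i)\mid f_i(\varphi)=t^{n_i}\bmod t^{n_i+1}\}]\,\Lbb^{-d_i n_i}$, and likewise the coefficient of $\Phi(\iota^*\widetilde\zeta_\p)$ at $\T^\n\U^\m$ is the external product $c^{(1)}_\l\times\cdots\times c^{(\eta)}_\l$ of classical coefficients for the $p_k$. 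At that point both sides factor the same way and the matched factors are equated via the coefficient formula from the proof of Theorem \ref{thm21}, exactly as you intended. So the fix is precisely: apply $\Phi$ to the right-hand side as well, compute that $\Phi$ converts $\zeta_\p$-coefficients to classical ones, and compare there, rather than trying to push the left-hand side through $\Phi^{-1}$.
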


\begin{proof}
First, we note that $\zeta_{\f}(\T)$ and $\zeta_{\g}({\U})$ are elements of $\MM_{X_0}^{<}[[\T]]$ and $\MM_{Y_0}^{<}[[{\U}]]$, respectively. By definition, it suffices to show that
\begin{align}\label{sufficient}
\Phi(\zeta_{\f}(\T))\boxast_0 \Phi(\zeta_{\g}(\U))=\sum_{\p}\Phi(\iota^*\widetilde\zeta_{\p}),
\end{align}
where $\p=(p_1,\ldots,p_{\eta})$, $\widetilde\zeta_{\p}=\zeta_{\p}(T_{\alpha_1}^{a_1}U_{\beta_1}^{b_1},\dots,T_{\alpha_{\eta}}^{a_{\eta}}U_{\beta_{\eta}}^{b_{\eta}})$, and the sum is taken over all the $\p$ in the theorem. Writing $\Phi(\zeta_{\f}(\T))=\sum_{\n\in \Delta_{<}} a_{\n}\T^\n$ and $\Phi(\zeta_{\g}(\U))=\sum_{\m\in \Delta_{<}} b_{\m} \U^\m$ we get
\begin{align*}
a^{(i)}_{\n}&=\left[\left\{\varphi\in\mathscr{L}_{n_i}(X_i) \mid f_i(\varphi)=t^{n_i} \mod t^{n_i+1}\right\}\to X_{i,0}\right] \mathbb{L}^{-d_in_i},\\
b^{(j)}_{\m}&=\left[\left\{\psi\in\mathscr{L}_{m_j}(Y_j) \mid g_j(\psi)=t^{m_j} \mod t^{m_j+1}\right\}\to Y_{j,0}\right] \mathbb{L}^{-e_j m_j},
\end{align*}
with $d_i=\dim_k X_i$ and $e_j=\dim_k Y_j$. 

Observe that the coefficients of $\T^\n\U^\m$ in both sides of (\ref{sufficient}) are zero for $\n\not\in \Delta_<$ or $\m\not\in \Delta_<$. In this case, indeed, the statement for the left hand side comes from Definition \ref{n-product} (i), and that for the right hand side is due to the hypothesis that the sequences $\{\alpha_i\}_{a_i=1}$ and $\{\beta_i\}_{b_i=1}$ are strictly monotonic increasing. For $\n\in \Delta_<$ and $\m\in \Delta_<$, since the supports of the $\widetilde\zeta_{\p}$ are distinct, it suffices to show that there exists $\p$ such that the coefficient of $\T^\n\U^\m$ in $\Phi(\iota^*\widetilde\zeta_{\p})$ equals the one in $\Phi(\zeta_{\f}(\T))\boxast_0 \Phi(\zeta_{\g}(\U))$. To prove this, we set 
$$\{l_1<\dots<l_{\eta}\}:=\{\n\}\cup \{\m\}=\{n_1,\dots,n_r, m_1,\dots,m_s\}$$ 
and set 
$$p_i=a_i f_{\alpha_i}\oplus b_i g_{\beta_i},\ 1\leq i\leq \eta,$$ 
with $a_i=1$ (resp. $b_i=1$) if $l_i=n_{\alpha_i}\in \{\n\}$ (resp. $l_i=m_{\beta_i}\in \{\m\}$), otherwise $a_i=0$ (resp. $b_i=0$). Define $\l:=(l_1,\dots,l_{\eta})$. It is easily checked that the coefficient $c_{\l}$ of $\T^\n\U^\m$ in $\Phi(\iota^*\widetilde\zeta_{\p})$  equals $c^{(1)}_{\l}\times\cdots\times c^{(\eta)}_{\l}$, where 
\begin{align*}
c^{(i)}_{\l}:=\left[\left\{\omega\in\mathscr{L}_{l_i}(Z_i) \mid p_i(\omega)=t^{l_i} \mod t^{l_i+1}\right\}\to Z_{i,0}\right] \mathbb{L}^{-\delta_i l_i}
\end{align*}
with $Z_i:=(X_{\alpha_i})^{a_i}\times (Y_{\beta_i})^{b_i}$ and $\delta_i=\dim_k Z_i$. It follows from the proof of Theorem \ref{thm21} and direct calculations that 
$$c^{(i)}_{\l}=
\begin{cases} a^{(\alpha_i)}_{\n} & \text{ if } b_i=0,\\
b^{(\beta_i)}_{\m} & \text{ if }a_i=0,\\
a^{(\alpha_i)}_{\n}\ast b^{(\beta_i)}_{\m} & \text{ if }a_i=b_i=1.
\end{cases}
$$
This proves the theorem.
\end{proof}

The following corollaries are direct consequences of Theorem \ref{associativity}.
\begin{corollary}\label{lastcor1}
Let $\f=(f_1,\dots,f_r)$, $\g=(g_1,\dots,g_s)$ and $\mathbf{h}=(h_1,\dots,h_{\tau})$ be ordered families of regular functions on smooth algebraic $k$-varieties $X_1,\dots,X_r$, $Y_1,\dots,Y_s$ and $Z_1,\dots,Z_{\tau}$, respectively. Then
\begin{align*}
(\zeta_{\f}(\T)\boxast \zeta_{\g}(\U))\boxast \zeta_{\mathbf h}(\mathbf V)=\sum\iota^*\zeta_{p_1,\ldots,p_{\tau}}(T_{\alpha_1}^{a_1}S_{\beta_1}^{b_1}U_{\gamma_1}^{c_1},\ldots,T_{\alpha_\tau}^{a_\tau}S_{\beta_\tau}^{b_\tau}U_{\gamma_\tau}^{c_\tau}),
\end{align*}
where the sum is taken over all the ordered families of regular functions $(p_1,\dots,p_{\eta})$ satisfying 
$$p_i=a_i f_{\alpha_i}\oplus b_i g_{\beta_i}\oplus c_i h_{\gamma_i}, \ 1\leq i\leq \eta,$$ 
with $(a_i,b_i,c_i)\in \{0,1\}^3\setminus \{(0,0,0)\}$, $\sum(a_i+b_i+c_i)=r+s+\tau$, and $\{\alpha_i\}_{a_i=1}$, $\{\beta_i\}_{b_i=1}$ and $\{\gamma_i\}_{c_i=1}$ being strictly monotonic increasing sequences. 

In particular, the $\boxast$-product is associative in the class of motivic multiple zeta functions.
\end{corollary}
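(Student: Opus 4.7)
The plan is to apply Theorem \ref{associativity} twice and then identify the resulting double sum with the single sum in the statement via a combinatorial reindexing. First, I would expand the inner product $\zeta_{\f}(\T)\boxast \zeta_{\g}(\U)$ by Theorem \ref{associativity} to obtain a finite sum $\sum_{\mathbf q} \iota_1^*\zeta_{\mathbf q}(\cdots)$, where each $\mathbf q = (q_1,\ldots,q_\mu)$ is an ordered family with $q_i = a_i f_{\alpha_i}\oplus b_i g_{\beta_i}$ and the variables are monomials in $T_{\alpha_i}, U_{\beta_i}$. Then, invoking the evident bilinearity of $\boxast$ (Definition \ref{n-product} expresses each coefficient as a finite $\mathbb Z[\Lbb,\Lbb^{-1}]$-linear combination of the bilinear operations $\ast$, $\ast_0$, $\times$ composed with the augmentation), I would distribute $\boxast\,\zeta_{\mathbf h}(\mathbf V)$ over this sum and apply Theorem \ref{associativity} a second time to each term $\iota_1^*\zeta_{\mathbf q}(\cdots)\boxast \zeta_{\mathbf h}(\mathbf V)$, producing a double sum indexed by pairs $(\mathbf q, \mathbf p')$ in which $\mathbf p'$ records the coincidence data between the indices of $\mathbf q$ and those of $\mathbf h$.

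The key combinatorial observation is that this double sum is canonically reindexed by the ordered families $\mathbf p = (p_1,\ldots,p_\eta)$ of the corollary, i.e., built simultaneously from $\f,\g,\mathbf h$. Concretely, such a family is determined by the merged sorted pattern on the positions $\alpha_i,\beta_i,\gamma_i$ together with coincidence data $(a_i,b_i,c_i)\in\{0,1\}^3\setminus\{(0,0,0)\}$; in the two-step procedure, the intermediate family $\mathbf q$ records exactly the coincidences between the $\alpha$- and $\beta$-indices, while the second step records coincidences of the merged sequence with the $\gamma$-indices. Checking that every triple-merge pattern arises from a unique pair $(\mathbf q,\mathbf p')$ and that the variable monomials $T_{\alpha_i}^{a_i}S_{\beta_i}^{b_i}U_{\gamma_i}^{c_i}$ match up is a bookkeeping exercise that reduces, coefficient by coefficient, to the definition of $\boxast$ at positions with triple coincidence, double coincidence, or none.

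The main obstacle, and what I would spend most care on, is the compatibility between the pullback $\iota_1^*$ from the first application of Theorem \ref{associativity} and the second application of $\boxast$. Since $\boxast$ is defined coefficient-wise through operations ($\ast$, $\ast_0$, $\times$ and the augmentation) that all commute with pullback along the closed immersion $\iota_1\times\id$, one has
\begin{equation*}
\iota_1^*\zeta_{\mathbf q}(\cdots)\boxast \zeta_{\mathbf h}(\mathbf V) \;=\; (\iota_1\times \id)^*\bigl(\zeta_{\mathbf q}(\cdots)\boxast \zeta_{\mathbf h}(\mathbf V)\bigr),
\end{equation*}
which allows the direct invocation of Theorem \ref{associativity} on the unpulled product before restricting back to the common zero locus; composition of the two closed immersions then yields the single $\iota^*$ appearing on the right-hand side. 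Once this compatibility is secured and the bijection of indices checked, the stated formula follows.

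For the final assertion that $\boxast$ is associative on motivic multiple zeta functions, I would observe that the sum obtained is manifestly symmetric in the three families: it depends only on the merged ordered pattern of indices from $\f,\g,\mathbf h$ together with the coincidence triples $(a_i,b_i,c_i)$, with no preferred grouping. Running the same two-step argument with the grouping $\zeta_{\f}\boxast\bigl(\zeta_{\g}\boxast\zeta_{\mathbf h}\bigr)$ reproduces exactly the same right-hand side, hence the two expressions agree and $\boxast$ is associative in this class.
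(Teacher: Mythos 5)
Your proposal is correct and follows the same route the paper intends: the paper states only that this corollary is a ``direct consequence of Theorem~\ref{associativity}'', giving no details, and your two-step application of that theorem (with bilinearity of $\boxast$, compatibility with the closed-immersion pullback, and the reindexing of the double sum by merged patterns) is a reasonable and accurate elaboration of that claim. The concluding symmetry observation for associativity is also what the paper has in mind.
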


%\begin{corollary}%[Commutativity and associativity]
%The $\boxast$-product is commutative and associative in the class of motivic zeta functions defined in Definition \ref{MMZF}.
%\end{corollary}

\begin{corollary}\label{lastcor2}
Let $f$, $g$ and $h$ be regular functions on smooth algebraic $k$-varieties $X$, $Y$ and $Z$, respectively. Then, up to the pullback of an inclusion of $X_0\times Y_0\times Z_0$ in a Zariski closed subset of $X\times Y\times Z$, the following identity holds in $\mathscr M_{X_0\times Y_0\times Z_0}^{\hat\mu}[[T,U,V]]$:
\begin{align*}
\zeta_{f}(T)\boxast \zeta_{g}(U)\boxast \zeta_{h}(V)&=\zeta_{f,g,h}(T,U,V)+\zeta_{f,h,g}(T,V,U)+\zeta_{g,h,f}(U,V,T)\\
&\quad\quad +\zeta_{g,f,h}(U,T,V)+\zeta_{h,f,g}(V,T,U)+\zeta_{h,g,f}(V,U,T)\\
&\quad\quad +\zeta_{f\oplus g,h}(TU,V)+\zeta_{h,f\oplus g}(V,TU)+\zeta_{f,g\oplus h}(T,UV)\\
&\quad\quad +\zeta_{g\oplus h,f}(UV,T)+\zeta_{g,f\oplus h}(U,TV)+\zeta_{f\oplus h,g}(TV,U)\\
&\quad\quad +\zeta_{f\oplus g\oplus h}(TUV).
\end{align*}
\end{corollary}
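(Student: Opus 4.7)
The plan is to derive this corollary as an immediate specialization of Corollary \ref{lastcor1} to the case $r=s=\tau=1$, taking $\f=(f)$, $\g=(g)$, $\mathbf h=(h)$, and relabelling the multivariates as the single variables $T,U,V$. Under this reduction the sum in Corollary \ref{lastcor1} is indexed by ordered families $(p_1,\ldots,p_\eta)$ with $p_i=a_if\oplus b_ig\oplus c_ih$, $(a_i,b_i,c_i)\in\{0,1\}^3\setminus\{(0,0,0)\}$, $\sum_i(a_i+b_i+c_i)=3$, and the three index sequences $\{i:a_i=1\}$, $\{i:b_i=1\}$, $\{i:c_i=1\}$ each containing at most one element (since the underlying $\alpha,\beta,\gamma$ each range over a singleton, so strict monotonicity collapses to the condition that each of $f,g,h$ appears in exactly one $p_i$).

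The main step is then purely combinatorial: I would enumerate such families by the value of $\eta\in\{1,2,3\}$. When $\eta=3$, each $p_i$ carries exactly one of $f,g,h$, so the family is determined by a permutation of $\{f,g,h\}$; this yields the six terms $\zeta_{f,g,h}(T,U,V),\zeta_{f,h,g}(T,V,U),\zeta_{g,f,h}(U,T,V),\zeta_{g,h,f}(U,V,T),\zeta_{h,f,g}(V,T,U),\zeta_{h,g,f}(V,U,T)$. When $\eta=2$, one $p_i$ is the Thom--Sebastiani sum of two of the functions and the other $p_i$ is the remaining function; the pair can be chosen in $\binom{3}{2}=3$ ways and its position in $2$ ways, giving the six mixed terms $\zeta_{f\oplus g,h}(TU,V)$, $\zeta_{h,f\oplus g}(V,TU)$, $\zeta_{f,g\oplus h}(T,UV)$, $\zeta_{g\oplus h,f}(UV,T)$, $\zeta_{g,f\oplus h}(U,TV)$, $\zeta_{f\oplus h,g}(TV,U)$. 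When $\eta=1$, the unique admissible family is $p_1=f\oplus g\oplus h$, producing the last term $\zeta_{f\oplus g\oplus h}(TUV)$. The $6+6+1=13$ contributions match exactly the right-hand side of the displayed identity.

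The only point of bookkeeping to check, which is absorbed into the parenthetical clause \emph{up to the pullback of an inclusion of $X_0\times Y_0\times Z_0$ in a Zariski closed subset of $X\times Y\times Z$}, is that different summands live naturally in monodromic Grothendieck rings over different closed subvarieties of $X\times Y\times Z$ (for instance $\zeta_{f\oplus g,h}$ lives over $\{f\oplus g=0\}\times Z_0$, which contains $X_0\times Y_0\times Z_0$). Writing the identity in $\mathscr M_{X_0\times Y_0\times Z_0}^{\hat\mu}[[T,U,V]]$ therefore requires applying the corresponding inclusion pullbacks $\iota^*$, exactly as already packaged into the statement of Corollary \ref{lastcor1}. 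I do not expect any genuine obstacle here: the hard analytical content, namely the multivariate motivic Euler reflexion formula and the associativity of $\boxast$ on motivic multiple zeta functions, is supplied by Theorem \ref{associativity} and Corollary \ref{lastcor1}, and the remainder is a bookkeeping enumeration.
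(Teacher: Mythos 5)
Your proposal is correct and follows essentially the same route as the paper, which states that Corollary \ref{lastcor2} (like Corollary \ref{lastcor1}) is a direct consequence of Theorem \ref{associativity}; specializing Corollary \ref{lastcor1} to $r=s=\tau=1$ is exactly that derivation. Your enumeration by $\eta\in\{1,2,3\}$, the observation that for $r=s=\tau=1$ the strict monotonicity of $\{\alpha_i\}_{a_i=1}$ (and similarly for $\beta,\gamma$) together with $\sum(a_i+b_i+c_i)=3$ forces each of $f,g,h$ to appear in exactly one $p_i$, and the resulting $6+6+1=13$ terms all match the displayed right-hand side.
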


\begin{remark}
After many attempts we still do not know whether the $\boxast$-product is associative in the class of integrable series over monodromic Grothendieck rings of algebraic varieties.
\end{remark}

% References   ---------------------------------------------------------------------------------
\bibliographystyle{amsplain}

\end{document}